\theoremstyle{plain}
\newtheorem{theorem}{Theorem}[section]
\newtheorem{lemma}[theorem]{Lemma}
\newtheorem{proposition}[theorem]{Proposition}
\theoremstyle{definition}
\newtheorem{definition}[theorem]{Definition}
\begin{document}

\author{Paolo Piovano}
\address[Paolo Piovano]{University of Vienna, Oskar-Morgenstern-Platz 1, 1090 Vienna, Austria.}
\email{paolo.piovano@univie.ac.at}

\author{Leonard Kreutz}
\address[Leonard Kreutz]{Applied Mathematics M\"unster, University of M\"unster, 
Einsteinstrasse 62, 48149 M\"unster, Germany.}
\email{leonard.kreutz@gmail.com}

\title[Microscopic validation of thin-film models]{Microscopic validation of a variational model of epitaxially strained crystalline films}

\maketitle

\begin{abstract}
A discrete-to-continuum analysis for free-boundary problems related to crystalline films deposited on substrates is performed by $\Gamma$-convergence.
The discrete model here introduced  is characterized by an energy with two contributions, the surface and the elastic-bulk  energy, and it is formally  justified starting from  atomistic interactions. The surface energy counts missing bonds at the film and substrate boundaries, while the elastic energy models the fact that for film atoms there is a preferred interatomic distance different from the preferred interatomic distance for substrate atoms. 
In the regime of small mismatches  between the film and the substrate optimal lattices,  a discrete rigidity estimate is established by regrouping  the  elastic energy in triangular-cell  energies 
and  by locally applying rigidity estimates from the literature. This is crucial to establish  pre-compactness for  sequences  with  equibounded energy and to prove that the limiting deformation is one single rigid motion.  By properly matching the convergence scaling of the different  terms in the discrete energy, both surface and elastic contributions appear also in the resulting continuum  limit in agreement (and in a  form consistent)  with literature models. Thus, the analysis performed  here is  a microscopical justifications of such models. 
\end{abstract}

\textbf{Keywords:}
epitaxially-strained films,  atomistic energy, lattice mismatch, elasticity,  rigidity estimate, wetting,  discrete-to-continuum passage, $\Gamma$-convergence. 

\medskip

\textbf{AMS Subject Classification:}
74K35, 74G65, 49Q20, 49J45

\section{Introduction}

 In this paper a discrete model for crystalline films deposited on substrates in the presence of a mismatch between the parameters of the film and the substrate crystalline lattices  is introduced  and a discrete-to-continuum passage is performed by $\Gamma$-convergence. Thin films find nowadays an ever-growing number of applications, such as for optoelectronics, photovoltaic devices, solid oxide fuel/hydrolysis cells, and any advancement in the modeling of thin films can, in principle, induce a significant technological innovation. As the obtained continuum $\Gamma$-limit of our analysis is in accordance with the theory of Stress-Driven Rearrangement Instabilities (SDRI) (see \cite{AT,Gr}), and in particular with the variational  thin-film  models introduced  in \cite{DP1,fonseca2007equilibrium, spencer1999asymptotic,spencer1997equilibrium}, 
our investigation represents also a microscopical justification of such models. 

\hspace{0.4cm} The Transition-Layer model and the Sharp-Interface model for epitaxially strained films introduced in  \cite{spencer1999asymptotic, spencer1997equilibrium} for regular film profiles, and then analytically derived in \cite{DP1,fonseca2007equilibrium} for general profiles by $\Gamma$-convergence and by relaxation,  are characterized by energies displaying both surface and elastic-bulk terms.  Including in the model elastic bulk deformations is crucial in the presence of a mismatch between the crystalline lattices of the film and the substrate.   In fact,  even  though the minimum energy configuration for the bulk occurs at a stress-free structure for each material, the relaxation to the  respective elastic  equilibria of the two materials leads  to a discontinuous crystalline structure that is associated to  an extremely high energy contribution  at the interface between the film and the substrate. 
Thus, in order  to match the crystalline lattices, bulk deformations and mass rearrangement take place  in the bulk. As a consequence, discontinuities, such as corrugations or cracks, can be induced at the film profile. This is possible to some extent, as they have an energetic price in terms of the surface tension that is (at least in the isotropic case)  proportional to the surface area. The resulting configuration is therefore a compromise between the roughening effect of the elastic energy and the regularizing effect of the surface energy, with the former prevailing when   the thickness of the film is large enough  as discussed in \cite{fusco2012equilibrium}. 

\hspace{0.4cm}  In the same spirit of the SDRI theory  \cite{AT,Gr}  and of \cite{spencer1999asymptotic, spencer1997equilibrium}, also the discrete variational model here introduced is characterized by an energy $E_{\varepsilon}$ that displays both a surface term $E_{\varepsilon}^{S}$ and a elastic term $E_{\varepsilon}^{el}$, i.e., 
\begin{equation}\label{energy_intro}
E_{\varepsilon}(y,h):=E_{\varepsilon}^{S}(y,h)+E_{\varepsilon}^{el}(y,h), 
\end{equation}
where $\varepsilon>0$ is a scaling parameter, $h$ measures the hight of the film, and $y$ represents the  bulk deformation. More precisely, in the discrete setting we fix a reference lattice $\mathcal{L}$, here chosen to be a equilateral triangular lattice, and $E_{\varepsilon}$ depends on discrete functions $h$ and $y$ denoted \emph{discrete profiles} and  \emph{discrete deformations}, respectively, that are defined with respect to $\varepsilon\mathcal{L}$. Given the discrete set  $S_\varepsilon:=\varepsilon\mathcal{L}\cap((0,L)\times\{x_2=0\})$  for  a parameter $L>0$ fixed,  a film discrete profile is a function $h : S_\varepsilon\to \mathbb{R}_+$ such that  the elements in $\{x=(i,x_2)\in\varepsilon\mathcal{L} : x_2\in(0,h(i))\}$ are assumed to be film atoms for every  $i\in S_\varepsilon$. We also identify each discrete profile $h$ with a properly characterized lower-semi\-con\-ti\-nuous piecewise constant interpolation over the interval $(0,L)$ so that, 
$$\Omega_h^+ :=  \left\{(x_1,x_2)\in\mathbb{R}^2 : 0<x_1<L, 0<x_2 <  h(x_1)  \right\}$$
 represents the region occupied by films with profile $h$. The substrate is instead assumed to occupy the region $\Omega^-:=(0,L)\times(-R,0]\subset\mathbb{R}^2$ for some parameter $R>0$, with $\varepsilon\mathcal{L}\cap\Omega^-$  representing   the  reference  substrate atoms.  
 Furthermore, we call  discrete deformation every function $y : \mathcal{L}_\varepsilon(\Omega_h) \to \mathbb{R}^2$ where $\mathcal{L}_\varepsilon(\Omega_h):= \mathcal{L}_\varepsilon \cap \Omega_h$ with $\Omega_h:=\Omega_h^+\cup\Omega^-$. 

\hspace{0.4cm} The   surface energy $E_{\varepsilon}^{S}$ in \eqref{energy_intro} takes into account the missing bonds at the boundary of $\Omega_h$ and it is defined for  each discrete profile $h$ by
\begin{equation}\label{energy_surface_intro}
E_{\varepsilon}^{S}(h):= 
\sum_{x \in  \mathcal{L}_\varepsilon(\Omega_h)}\hspace{-0.1cm}\varepsilon\gamma(x)(6- \#\mathcal{N}_\varepsilon(x)),
\end{equation}
where 
$\mathcal{N}_\varepsilon(x) = \left\{\tilde{x} \in \mathcal{L}_\varepsilon(\Omega_h)\setminus \{x\} : |\tilde{x}-x|\leq \varepsilon \text{ or }  |\tilde{x}\pm  (L,0) -x|\leq \varepsilon \right\}$ denotes the set of nearest neighbors  (with $L$-periodic condition) of $x\in\mathcal{L}_\varepsilon(\Omega_h)$,  and 
\begin{align*}
\gamma(x):=  \begin{cases} \gamma_f&\textrm{if $x \in \Omega_h^+$},\\
\gamma_s&\textrm{if $x \in  \Omega^-$}
\end{cases}
\end{align*} 
for  some positive constants $\gamma_f$ and $\gamma_s$ depending on the film and substrate material, respectively. 
Notice that it would be equivalent to introduce  a dependence on  discrete deformations $y$ also in the definition of $E_{\varepsilon}^{S}$ by considering the sum in \eqref{energy_surface_intro} as extended over the elements of the deformed lattice $y(\mathcal{L}_\varepsilon(\Omega_h))$, if we  restrict to \emph{small deformations} $y$, i.e., deformations $y$ that do not change the topology of the lattice or, in other words, such that  $\#\mathcal{N}_\varepsilon(y(x))=\#\mathcal{N}_\varepsilon(x)$ for every $x\in\mathcal{L}_\varepsilon(\Omega_h)$. 

\hspace{0.4cm} The $\varepsilon$-rescaled elastic energy $E_{\varepsilon}^{el}$  in \eqref{energy_intro}, which model the elastic energy contribution due to elongation and compression of bonds, is defined on pairs  $(y,h)$  by 
\begin{equation}\label{energy_intro}
E_{\varepsilon}^{el}(y,h):= 
\sum_{x \in \mathcal{L}_\varepsilon(\Omega_h)}\sum_{\tilde{x} \in \mathcal{N}_\varepsilon(x)}\hspace{-0.1cm} \varepsilon V_{x,\tilde{x}}^\varepsilon\left(\frac{|y(x)-y(\tilde{x})|}{\varepsilon} \right),
\end{equation}
where  the potential $V_{x,\tilde{x}}:[0,\infty)\to\mathbb{R}$ is chosen to be a nonlinear elastic potentials attaining its minimum at different lengths for film or substrate atoms $x$. If $x\in\Omega^-$ the bonding equilibrium length is $\varepsilon$, while, if  $x\in\Omega^+_h$, it is  $\varepsilon \lambda_\varepsilon>0$. Therefore, the possibility of a nonzero \emph{lattice mismatch}
$$\delta_\varepsilon:=\lambda_\varepsilon-1$$
is taken into consideration. 

The aim of the paper is to pass to the limit of $E_{\varepsilon}$ in the sense of $\Gamma$-convergence \cite{braides2002gamma} with respect to a  proper topology and under the hypotheses that  the discrete profiles $h$ satisfy a  \emph{$\varepsilon$-volume  constraint}, i.e.,   $\|h\|_{L^1}=V_\varepsilon$ for some $V_\varepsilon \in \varepsilon^2 \sqrt{3}/2 \mathbb{N}$ chosen so that  $V_\varepsilon \to V>0$. Notice  that the $\varepsilon$-scaling of $E_{\varepsilon}^{el}$ is here chosen to properly match the convergence scaling of   $E_{\varepsilon}^{S}$ so that  both surface and elastic contributions appear in the resulting continuum  limit $E$  in agreement with  the SDRI theory  and literature models \cite{AT,DP1,fonseca2007equilibrium, Gr,spencer1999asymptotic,spencer1997equilibrium}.   


\hspace{0.4cm}  Rigorous $\Gamma$-convergence results \cite{dal2012introduction}  on the derivation of linear-elastic theories from  nonlinear elastic discrete energies  have been obtained in \cite{braides2007derivation,schmidt2009derivation} without the presence of surface energies.  A key  instrument in the analysis consists in establishing a \emph{discrete rigidity result} that allows to  globally   estimate the closeness of the deformation gradients  to a single rotation by only knowing that locally  deformation gradients  are close to the family of rotations, and to pass from the deformations $y$ to the rescaled ($\varepsilon$-dependent) associated displacements $u$.   For this reason we denote in the following \emph{discrete triples} those triples $(y,u,h)$ where $h$ is a discrete profile, $y$ is a discrete deformation, and $u$ is a \emph{discrete displacement} associated to $y$, i.e.,  $u:\mathcal{L}_\varepsilon(\Omega_h)\to\mathbb{R}^2$ is given by 
$u(x) = \varepsilon^{-1/2}(y(x)-(Rx+b))$ 
for some rotation $R \in SO(2)$ and $b \in \mathbb{R}^2$. The discrete rigidity estimate is obtained by regrouping  the  elastic energy in triangular-cell  energies  accounting for all the elastic contributions  related to each triangle in the film and substrate lattices, and by locally applying the rigidity estimate in  \cite{friesecke2002theorem} (see also \cite{schmidt2009derivation}).  For this to be implemented  the \emph{orientation-preserving condition}, 
$$\det\left(y(\tilde{x})-y(x),y(\bar{x})-y(x)\right)\det\left(\tilde{x}-x,\bar{x}-x\right) \geq 0,$$
as shown in \cite{alicandro2017effect},  must be imposed on the discrete displacements $y$ for every mutual nearest neighbors  $x,\tilde{x},\bar{x} \in \mathcal{L}_\varepsilon(\Omega_h)$, in order to avoid local reflection of the reference lattice, which might be otherwise energetically convenient (see \cite{braides2007derivation}).  Without such condition rigidity would fail and  a more complex description of the $\Gamma$-limit would be needed that do not find currently any examples  in the literature to the best of our knowledge. 

\hspace{0.4cm} Moreover, the discrete rigidity estimates  is  obtained  under the hypothesis that 
 \begin{equation}\label{mismatch_small_regime}
\delta_\varepsilon\varepsilon^{-1/2}\to \delta\in\mathbb{R}
 \end{equation}
 and hence,  from the three \emph{lattice-mismatch regimes} that we can characterize: 
\begin{itemize}
\item[a)] $|\delta_\varepsilon| < \varepsilon^{1/2}$,
\item[b)] $|\delta_\varepsilon| \sim \varepsilon^{1/2}$,
\item[c)] $|\delta_\varepsilon| > \varepsilon^{1/2}$, 
\end{itemize} 
we are here treating only a) and b). This  is due   to the fact that the rigidity estimate allows  to linearize around one single rotations both for the substrate and the film (by also introducing a tensor, the so-called  \emph{mismatch strain},  that accounts for the error due to the mismatch in equilibrium length). In regime c) where lattice mismatches are very large, imposing smooth deformations  on a reference lattice without  dislocations, i.e., extra half-lines of atoms appearing in one of the lattice directions,   creates a too high  energy contribution at the film-substrate interface (in fact infinite) \cite{fanzon2017variational,fonseca2018model}. Therefore,  the basic  modeling  assumption here considered of describing both film and substrate lattices as parametrized through deformations  on the same periodic reference lattice  seems  not feasible in regime c). 
We refer the reader to  \cite{FFLM4,HMRG}  for experimental evidence, especially for films with larger thickness,  of formation of dislocation-patterns  as a further mode of strain relief, and to \cite{lazzaroni2013discrete} where the presence of dislocations is taken into account  in a discrete-to-continuum passage for a model of nanowires (in the absence of a free boundary for one of the crystalline phases).

\hspace{0.4cm} In order to perform a discrete-to-continuum analysis it is convenient to embed \emph{discrete triples} $(y,u,h)$  to the larger configurational space
 $$
X:=\left\{(y,u,h) \,: \text{  $h$ is l.s.c. with  $\mathrm{Var}h<+\infty$, and  $y,u \in L^2_{\mathrm{loc}}(\Omega_h;\mathbb{R}^2)$} \right\}
 $$
by identifying not only each $h$ with proper piecewise interpolations in $(0,L)$, but also  $y$ and $u$ with properly defined piecewise affine interpolations in $\Omega_h$, and by extending $E_\varepsilon$ to be $+\infty$  on non-discrete  triples in $X$.  
In particular, we consider in $X$ the metrizable topology $\tau_X$ associated to the  notion  of convergence:  $(h_\varepsilon,y_\varepsilon,u_\varepsilon)\to(y,u,h) \in X$   if and only if   $y_\varepsilon \to y$ and $u_\varepsilon \to u$ in $L^2_{\mathrm{loc}}(\Omega_h;\mathbb{R}^2)$, and $\mathbb{R}^2 \setminus \Omega_{h_\varepsilon}$ converge to $ \mathbb{R}^2 \setminus \Omega_h$ with respect to the  Hausdorff-distance  (see Definition \ref{Definition Convergence}).   
 The pre-compactness with respect to this topology in X of energy equi-bounded sequences, i.e., sequences $(h_\varepsilon,y_\varepsilon,u_\varepsilon)$ such that $\sup\{E^\varepsilon(h_\varepsilon,y_\varepsilon,u_\varepsilon) : \varepsilon>0\}<\infty$, follows (apart from redefining the displacements associated to $y_\varepsilon$) from  the rigidity estimate. 
 However,  the fact that admissible  profile functions  $h$ are  in general  not Lipschitz represents a further difficulty as it is not guaranteed that the rigidity estimate of  \cite{friesecke2002theorem} can be applied uniformly. 
It is only possible to apply it for sets $\tilde{\Omega}$ that are smooth and compactly contained in $\Omega_{h_\varepsilon}$ for $\varepsilon$ small enough.   In  view of the discrete graph constraint  though, we are then able   to  invade the film region  and obtain a rigidity result with a fixed rotation on the substrate as well as the film. 

\medskip

\hspace{0.4cm}  We now  discuss the interesting features of the limiting energy  $E$  that  is defined by
\begin{align*}
E(y,u,h) := \begin{cases} E^{el}(y,u,h)+E^{S}(y,u,h)&\,\, \text{if $\|h\|_{L^1} = V$ and $y = Rx +b$}\\
 &\,\, \text{for some $(R,b)\in SO(2)\times \mathbb{R}^2$}\\
+\infty&\, \text{otherwise,}
\end{cases}
\end{align*}
for every $(y,u,h)\in X$, where $E^{el}$ and $E^{S}$ denote the elastic and surface energy, respectively.  The elastic  energy $E^{el}$ is  given by  
\begin{align}\label{elastic_continuum_energy}
 E^{el}(y,u,h) =\int_{\Omega_h} W_y(x,Eu(x))\mathrm{d}x
 \end{align}
where the elastic density is $W_y(x_2,A):= \frac{16}{\sqrt{3}}K(x_2) \left(|A-E_0(x_2,y)|^2+\frac12 Tr^2(A-E_0(x_2,y))\right)$ for
$$
K(x_2):=
\begin{cases}
K_f&\text{if }x_2> 0,\\
K_s&\text{if }x_2\leq0,
 \end{cases}
$$
and for the mismatch strain  
$$
E_0(x_2,y):=
\begin{cases}
\delta\nabla y&\text{if }x_2> 0,\\
0&\text{if }x_2\leq0.
 \end{cases}
$$ 
We observe that the elastic energy density  $W_y(x_2,\cdot)$ corresponds to linear elastic isotropic materials with Lam\'e parameters $\lambda_\alpha$ and $\mu_\alpha$ for $\alpha=f,s$ referring, respectively, to the film and the substrate, with $\lambda_\alpha=\mu_\alpha$. We notice that Lam\'e coefficients independent from each other could be obtained for the $\Gamma$-limit by  considering in the discrete model also  longer range interactions or changing the reference lattice.   Furthermore, we observe that  the elastic energy density  depends  on both $x_2$ and $y$.  The $y$-dependence in $W_y(x_2,\cdot)$ is due to the fact that the mismatch strain needs   to be measured with respect to the limiting orientation of the reference lattice (our discrete energies are frame indifferent),  while the   $x_2$-dependence is related to the fact that the mismatch strain is only nonzero in the film region, where the atoms in the reference lattice are not at the optimal film bonding distance.   Moreover, it is for the  $y$-dependence that it is necessary to double the  elastic  variables already  at  the discrete level, and keep track of the deformations $y$ as well as the associated rescaled displacements $u$. In the limit their dependence decouples and they are independent from each other, but note that their energy is not.

\hspace{0.4cm} The surface energy $E^{S}$ is defined by
  \begin{align}
 E^S(h) &= \gamma_f\left(\int_{\partial \Omega_h \cap \Omega_h^{1/2}\cap \{x_2>0\}} \varphi(\nu)\mathrm{d}\mathcal{H}^1+2\int_{\partial \Omega_h \cap \Omega_h^{1}} \varphi(\nu)\mathrm{d}\mathcal{H}^1\right)\nonumber\\&\qquad\qquad\qquad\qquad\qquad\qquad+\gamma_s\wedge\gamma_f\int_{\partial \Omega_h \cap \Omega_h^{1/2}\cap \{x_2=0\}} \varphi(\nu)\mathrm{d}\mathcal{H}^1 \label{surface_continuum_energy}
 \end{align}
for some constants $\gamma_f, \gamma_s>0$, the he anisotropic surface tension 
 $$\varphi(\nu) := 2\sqrt{3}/3\left(\left| \nu_2\right| + 1/2\left|\sqrt{3}\nu_1 -\nu_2\right| + 1/2\left|\sqrt{3}\nu_1 + \nu_2 \right| \right),$$
 
and the sets  $\Omega^s_h$ denoting the set of points of $\Omega_h$ with  density $s \in [0,1]$. We observe that  $\varphi$ depends on the choice of the reference lattice. Furthermore, the first term in the surface energy relates to the essential boundary of $\Omega_h$ and appears  with the factor $\gamma_f$, while the second, which relates to the cuts in the boundary in $\Omega_h$,  presents the factor $2\gamma_f$ since cuts need to be  counted double as they correspond   at  the discrete level to  cracks  of infinitesimal width.  Finally, the factor of the third term that is related to the portion of the boundary intersecting the substrate surface, distinguishes two regimes: in the \emph{wetting regime}, for $\gamma_s > \gamma_f$,   it is energetically more convenient to cover, namely to wet, the substrate with an infinitesimal layer of film atoms, while in the \emph{dewetting regime}, for $\gamma_s < \gamma_f$, it is better to leave the substrate exposed. 

 
 \medskip
 
%

In order to establish the $\Gamma$-convergence result we establish the lower bounds for the surface energy and the elastic energy separately.  For the surface energy we use the result in \cite{KP} in order to obtain a semi-continuity result for surface energies defined on sets. Furthermore the lower bound for the elastic energy is performed in the interior of the set $\Omega_h$, where the compactness and rigidity results ensure good convergence and equi-integrability properties. We then Taylor-expand our interaction energies close to the limiting deformation $y$. From this estimate in the interior we pass to a global estimate by invading the set $\Omega_h$.
 The upper bound for the limiting energy is obtained by performing careful density arguments.  The first one ensures that for every profile $h$ there exists a sequence Lipschitz profiles $\{h_k\}_k$ converging from below to $h$ in such a way that the  surface  energy converges. Here we use the Yosida transform of $h$ to obtain Lipschitz profiles $\{h_k\}_k$ (not satisfying the volume constraints) such that $E^S(h_k) \to E^S(h)$. The calculations to check this are much in the spirit of \cite{chambolle2002computing}. 
In this way also the elastic energy of the approximation (which is just the function restricted to $\Omega_{h_k}$) converges to the limiting elastic energy.  Once this sequence is constructed we still need to modify $h_k$ as well as $u$ (now depending on $k$) so that the sets $\Omega_{h_k}$ satisfy the volume constraints. This involves refining arguments used in \cite{fonseca2007equilibrium} or \cite{DP1} in order to deal with anisotropic surface energies.  Finally, we observe that for a Lipschitz profile $h$ and a general displacement $u$ with finite energy there exists a sequence $\{u_k\}_k \in C^\infty(\Omega_h)$ converging to $u$ in $L^2(\Omega_h)$ such that also the energy converges,  and for such $(u,h)$ we construct the recovery sequence explicitly. The recovery sequence is obtained  by interpolating $u$ at the lattice nodes, considering the piecewise constant interpolations of $h$, and then rising them to match the volume constraint. 
 
 \medskip
 
 The article is organized as follows. In Section 2 we introduce the  mathematical setting and the discrete model.  In Section 3  we prove coercivity of our energies.  Finally, in Section 4  we perform the asymptotic analysis by proving the lower and the upper bound.

\section{ The mathematical setting}\label{setting}

 In this section we introduce the main notation and the mathematical setting of the discrete and continuum models. 
 We begin by  recalling   the definition of $\Gamma$-convergence that represents the main instrument  to derive effective theories for discrete systems (see \cite{braides2002gamma,dal2012introduction} for a detailed introduction to the theory of $\Gamma$-convergence). 

\medskip
 Given a metric space $(X,d_X)$ and a sequence of functionals $\{E_\varepsilon\}: X \to [0,+\infty]$, we say that  $E_\varepsilon$ \emph{$\Gamma$-converge} to a functional $E: X \to [0,+\infty]$  
if the following two conditions hold:
\begin{itemize}
\item[(i)] For all $\{x_\varepsilon\}_\varepsilon \subset X$ converging to $x\in X$ with respect to $d$ there holds
\begin{align*}
\liminf_{\varepsilon \to 0} E_\varepsilon(x_\varepsilon) \geq E(x).
\end{align*}
\item[(ii)] For all $x \in X$ there exists a sequence  $\{x_\varepsilon\}_\varepsilon $ converging to $x$ with respect to $d$ such that
\begin{align*}
\limsup_{\varepsilon \to 0} E_\varepsilon(x_\varepsilon)\leq E(x).
\end{align*}
\end{itemize}
In this case we write $\displaystyle E= \Gamma\text{-}\lim_{\varepsilon \to 0} E_\varepsilon$.
 Furthermore,  we consider the functionals $E^\prime,E^{\prime\prime}: X \to [0,+\infty]$  defined by 
\begin{align*}
E^\prime(x) := \inf \left\{\liminf_{\varepsilon \to 0}E_\varepsilon(x_\varepsilon) : d_X(x_\varepsilon,x) \to 0 \right\}
\end{align*}
and 
\begin{align*}
E^{\prime\prime}(x) := \inf \left\{\limsup_{\varepsilon \to 0}E_\varepsilon(x_\varepsilon) : d_X(x_\varepsilon,x) \to 0 \right\}
\end{align*}
for every $x \in X$, respectively.   We observe that $E^\prime, E^{\prime\prime}$ always exist, and  that there exists $E: X \to [0,+\infty]$ with $E^\prime=E^{\prime\prime} = E(x)$ for all $x \in X$ if and only if $\displaystyle E= \Gamma\text{-}\lim_{\varepsilon \to 0} E_\varepsilon$ \cite{braides2002gamma,dal2012introduction}.  In the following we denote $E^\prime$ and $E^{\prime\prime}$ by  $ \Gamma\text{-}\liminf_{\varepsilon \to 0} E_\varepsilon$ and $ \Gamma\text{-}\limsup_{\varepsilon \to 0} E_\varepsilon$, respectively.  

\medskip   The main result of the paper is a $\Gamma$-convergence result (see Theorem \ref {main_theorem}). 
 In the rest of this section we introduce the metric space $(X,d_X)$, the sequence $\{E_\varepsilon\}$, and the limit functional $E$ for which we establish the $\Gamma$-convergence results.  Notice that this result represents a \emph{discrete-to-continuum passage} as the functionals $E_\varepsilon$ are defined with respect to reference lattices $\mathcal{L}_\varepsilon$, while the $E$ depends on functions defined on  continuum sets. 

 To this aim let us recall from the Introduction some notation. We fix two parameters $L,R>0$ and we denote the scaling parameter by $\varepsilon>0$. In the following, we denote by $C>0$ a generic constant that may change from line to line. We consider $S^1_L = \mathbb{R}/L\mathbb{Z}$ endowed with its usual distance, $Q = (0,L) \times (-R,+\infty)$,  and $Q^+ = (0,L) \times (0,+\infty)$.  For $A,B \subset \mathbb{R}^2$ we denote the Hausdorff-distance between the set $A$ and the set $B$ by
$$
\mathrm{dist}_{\mathcal{H}}(A,B) = \sup\left\{\sup\{\mathrm{dist}(x,B) :x \in A\},\sup\{\mathrm{dist}(x,A) : x\in B\}\right\},
$$ 
and by $SO(2) =\{R \in \mathbb{R}^{2\times 2} :  RR^T=\mathrm{Id},\mathrm{det}(R)=1\}$ the set of all rotations in $\mathbb{R}^2$.



\subsection{The discrete model}


In this subsection we define the discrete triples of profiles, deformations, and displacements, and the general configurational space with the topology with respect to the $\Gamma$-convergence is carried out, and we introduce the discrete energy (also starting form atomistic interactions).  
\medskip

 \textbf{Discrete profiles.}  
We say that  $h : \frac{\sqrt{3}}{2}\varepsilon(\mathbb{Z}+\frac{1}{2}) \cap S^1_L \to \mathbb{R}_+$ is a \emph{discrete profile} if 
\begin{align}\label{eq : depo}
h\left(\frac{\sqrt{3}}{2}\varepsilon \left(i +\frac{1}{2}\right)\right) \in \begin{cases} \frac{\varepsilon}{2} + \varepsilon\mathbb{N} &i \text{ even},\\
\varepsilon\mathbb{N} &i \text{ odd}.
\end{cases}
\end{align}
 Furthermore, we identify every discrete profile $h$ with the lower-semicontinuous piecewise-constant interpolation defined as  
\begin{align}\label{h Interpolation}
h(x) = \begin{cases}
 h(i)  & i-\frac{\sqrt{3}}{4}\varepsilon < x < i+\frac{\sqrt{3}}{4}\varepsilon,i \in \frac{\sqrt{3}}{2}\varepsilon(\mathbb{Z}+\frac{1}{2}) \cap S^1_L, \\
 \min\{h(i),h(i+\frac{\sqrt{3}}{2}\varepsilon)\} &x=i+\frac{\sqrt{3}}{4}\varepsilon, i \in \frac{\sqrt{3}}{2}\varepsilon(\mathbb{Z}+\frac{1}{2}) \cap S^1_L.
 \end{cases}
\end{align}
The  \emph{set  of admissible profiles}  is denoted by $AP(S^1_L)$ and characterized as 
\begin{align}\label{Space: AP}
AP(S^1_L) = \Big\{h: S^1_L \to \mathbb{R}_+ : h \text{ is lower-semicontinuous and } \mathrm{Var}h <+\infty\Big\}.
\end{align}
 For every profile  $h \in AP(S^1_L)$ we set
\begin{align}\label{Defintion Omega}
\begin{split}
&\Omega_h := \left\{(x_1,x_2) \in Q : x_2 <  h(x_1)  \right\},\\& \Omega^-= \left\{(x_1,x_2) \in Q: -R < x_2 \leq 0 \right\},\\&
\Omega_h^+= \Omega_h \setminus \Omega^-.
\end{split}
\end{align} 

 We notice that for every discrete profile $h$ we have  $h(0)=h(L)= \min\{h(\frac{\sqrt{3}}{4}\varepsilon),h( L -\frac{\sqrt{3}}{4}\varepsilon)\}$ and, by considering the identified interpolation \eqref{h Interpolation}, $h\in AP(S^1_L)$ (see Figure ~\ref{Fig.Reference Configuration}). 

\medskip

 \textbf{Reference lattice.}  
We choose a triangular lattice as the reference lattice. More precisely, let 
$$
A= \frac{1}{2}\begin{pmatrix}
\sqrt{3} & 0 \\
1 & 2
\end{pmatrix}
$$
and set
$\mathcal{L}_\varepsilon = \varepsilon\left(
A \mathbb{Z}^2+\left(\frac{\sqrt{3}}{4},0\right) \right)$. Furthermore, for any set $B\subset \mathbb{R}^2$ we denote $\mathcal{L}_\varepsilon(B) = \mathcal{L}_\varepsilon \cap B$. 

For $i \in \mathcal{L}_\varepsilon(\Omega_h)$ we define
\begin{align}\label{Definition Neighbourhood}
\mathcal{N}_\varepsilon(i) = \left\{j \in \mathcal{L}_\varepsilon(\Omega_h)\setminus \{i\} : |j-i|\leq \varepsilon \text{ or }  |j\pm  L   e_1 -i|\leq \varepsilon \right\}
\end{align}
the set of \textit{nearest neighbours of the point} $i \in \mathcal{L}_\varepsilon(\Omega_h)$. Note that by this definition points on the vertical boundary may be identified as neighbors,  so that also interactions across the lateral boundaries will be allowed. 
 Henceforth we assume that 
\begin{equation}\label{Llength}
 L =\sqrt{3} k_\varepsilon \varepsilon 
 \end{equation}
  for some $k_\varepsilon \in \mathbb{N}$ for all $\varepsilon >0$. Next we define the set of triangles of  $\mathcal{L}_\varepsilon(Q)$   by 
\begin{align*}
\mathcal{T}_\varepsilon = \Big\{&\{i_1,i_2,i_3\} : (i_1,i_2,i_3) \in (\mathcal{L}_\varepsilon(Q))^3,i_j \in \mathcal{N}_\varepsilon(i_k), \, j \neq k, \Big\}.
\end{align*}

\begin{figure}
\centering
\begin{tikzpicture}[scale=0.4]
\draw(-1,-1.5)--++(0,21);
\draw(17.5,-1.5)--++(0,21);
\draw[dashed](-.5,9.5)--++(17.5,0);
\draw(-1,9.5) node[anchor=east]{$y=0$};
\begin{scope}
\clip(-0.5,-0.25)--++(17.5,0)--++(0,10)--++(-17.5,0)--++(0,-10);
\foreach \j  in {-20,...,20}{
\foreach \i in {0,...,30}{
\draw[fill=gray](0,\j)++(30:\i) circle (.125); 
}
}
\end{scope}
\foreach \j in {0,...,8}{
\draw[fill=white](0,10+\j) circle (.125);
}
\foreach \j in {0,...,4}{
\draw[fill=white](0,10+\j)++(30:1) circle (.125);
}
\foreach \j in {0,...,6}{
\draw[fill=white](0,10+\j)++(30:1)++(-30:1) circle (.125);
}
\foreach \j in {0,...,2}{
\draw[fill=white](0,10+\j)++(30:2)++(-30:1) circle (.125);
}
\foreach \j in {0,...,2}{
\draw[fill=white](0,10+\j)++(30:2)++(-30:2) circle (.125);
}
\foreach \j in {0,...,6}{
\draw[fill=white](0,10+\j)++(30:3)++(-30:3) circle (.125);
}
\foreach \j in {0,...,6}{
\draw[fill=white](0,10+\j)++(30:4)++(-30:3) circle (.125);
}
\foreach \j in {0,...,5}{
\draw[fill=white](0,10+\j)++(30:4)++(-30:4) circle (.125);
}
\foreach \j in {0,...,4}{
\draw[fill=white](0,10+\j)++(30:5)++(-30:4) circle (.125);
}
\foreach \j in {0,...,3}{
\draw[fill=white](0,10+\j)++(30:5)++(-30:5) circle (.125);
}

\foreach \j in {0,...,6}{
\draw[fill=white](0,10+\j)++(30:6)++(-30:5) circle (.125);
}
\foreach \j in {0,...,6}{
\draw[fill=white](0,10+\j)++(30:6)++(-30:6) circle (.125);
}
\foreach \j in {0,...,7}{
\draw[fill=white](0,10+\j)++(30:7)++(-30:6) circle (.125);
}
\foreach \j in {0,...,5}{
\draw[fill=white](0,10+\j)++(30:7)++(-30:7) circle (.125);
}
\foreach \j in {0,...,4}{
\draw[fill=white](0,10+\j)++(30:8)++(-30:7) circle (.125);
}
\foreach \j in {0,...,4}{
\draw[fill=white](0,10+\j)++(30:8)++(-30:8) circle (.125);
}

\foreach \j in {0,...,2}{
\draw[fill=white](0,10+\j)++(30:9)++(-30:9) circle (.125);
}
\foreach \j in {0,...,7}{
\draw[fill=white](0,10+\j)++(30:10)++(-30:9) circle (.125);
}
\draw(-0.5*0.86602540378,18.5)--++(0.86602540378,0)--++(0,-3.5)--++(0.86602540378,0)--++(0,1.5)--++(0.86602540378,0)--++(0,-3.5)--++(0.86602540378,0)--++(0,-.5)--++(0.86602540378,0)--++(0,-2.5)--++(0.86602540378,0)--++(0,6.5)--++(0.86602540378,0)--++(0,.5)--++(0.86602540378,0)--++(0,-1.5)--++(0.86602540378,0)--++(0,-.5)--++(0.86602540378,0)--++(0,-1.5)--++(0.86602540378,0)--++(0,3.5)--++(0.86602540378,0)--++(0,-.5)--++(0.86602540378,0)--++(0,1.5)--++(0.86602540378,0)--++(0,-2.5)--++(0.86602540378,0)--++(0,-.5)--++(0.86602540378,0)--++(0,-.5)--++(0.86602540378,0)--++(0,-4.5)--++(0.86602540378,0)--++(0,2.5)--++(0.86602540378,0)--++(0,5.5)--++(0.86602540378,0);
\draw[<->](-.5,-1)--++(17.5,0);
\draw(8.5,-1) node[anchor=north]{$(0,L)$};
\end{tikzpicture}
\caption{The set $\Omega_h$ for a discrete profile $h$.  The black atoms are substrate atoms, whereas the white atoms belong to the film.}
\label{Fig.Reference Configuration}
\end{figure}
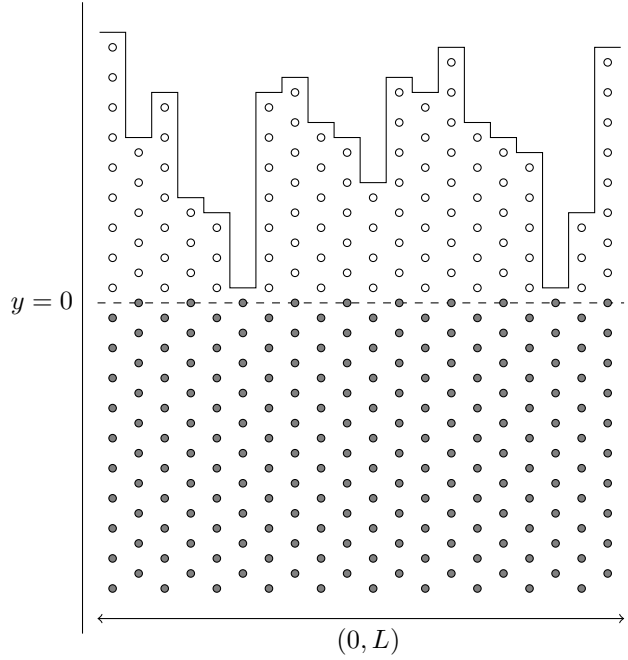

\
 \textbf{Discrete deformations.}  We refer to any map  $y : \mathcal{L}_\varepsilon(\Omega_h) \to \mathbb{R}^2$ as \emph{discrete deformation}, and we reduce to only  discrete deformations that are \textit{ orientation preserving}, i.e.,  such that 
\begin{align}\label{orientation preserving}
\det\left(y(i_2)-y(i_1),y(i_3)-y(i_1)\right)\det\left(i_2-i_1,i_3-i_1\right) \geq 0
\end{align}
for any $|i_k - i_j| = \varepsilon$ and $k\neq j$. 

\medskip
 \textbf{The discrete energy.}  For any discrete profile $h : \frac{\sqrt{3}}{2}\varepsilon(\mathbb{Z}+\frac{1}{2}) \cap S^1_L \to \varepsilon\mathbb{N}$ and orientation-preserving discrete deformation $y :\mathcal{L}_\varepsilon(\Omega_h) \to \mathbb{R}^2$ we define the energy of the pair $(y,h)$ by the sum 
\begin{align}\label{Definition Energy}
E_\varepsilon(y,h) = E^S_\varepsilon(h) + E^{el}_\varepsilon(y,h),
\end{align}
where 
\begin{align*}
&E^S_\varepsilon(h)= \gamma_f\sum_{i \in \mathcal{L}_\varepsilon(\Omega_h^+ )}\varepsilon(6- \#\mathcal{N}_\varepsilon(i)) + \gamma_s \sum_{i \in \mathcal{L}_\varepsilon(\Omega^-)}\varepsilon(6- \#\mathcal{N}_\varepsilon(i)), 
\end{align*} 
where $\gamma_f,\gamma_s >0$. 
The elastic energy of the system is given by
\begin{align}\label{Elasticrewritten}
E^{el}_\varepsilon(y,h) = \sum_{ i \in \mathcal{L}_\varepsilon(\Omega_h)} \sum_{j \in \mathcal{N}_\varepsilon(i)} \varepsilon V_{i,j}^\varepsilon\left(\frac{|y(i)-y(j)|}{\varepsilon} \right),
\end{align}
where   $V_{i,j}^\varepsilon  : \mathbb{R} \to \mathbb{R}$  is defined by
\begin{align}\label{DefinitionVi}
V_{i,j}^\varepsilon(r) = \begin{cases} \frac{K_s}{2}(r-1)^2 &i \in \mathcal{L}_\varepsilon(\Omega^-),|i-j|\leq \varepsilon, \\
\frac{K_s}{2}(r- r_1)^2  &i \in \mathcal{L}_\varepsilon(\Omega^-),|i-j|> \varepsilon, \\
 \frac{K_f}{2}(r-\lambda_\varepsilon)^2 &i \in \mathcal{L}_\varepsilon(\Omega^+_h),|i-j|\leq \varepsilon,\\
 \frac{K_f}{2}(r- r_2(\lambda_\varepsilon))^2  &i \in \mathcal{L}_\varepsilon(\Omega^+_h),|i-j|> \varepsilon.
\end{cases}
\end{align} 
 for $K_f,K_s,\lambda_\varepsilon >0$, with $r_1:=(3k_\varepsilon^2-3k_\varepsilon+1)^{1/2}$ and $r_2(\lambda_\varepsilon):=(3k_\varepsilon^2-3(2-\lambda_\varepsilon)k_\varepsilon+3(5/4-\lambda_\varepsilon/2)^2))^{1/2}$ being by \eqref{Llength} the lengths of the vectors $(L/\varepsilon-\sqrt{3},0) +( \frac{1}{2}\sqrt{3},\pm\frac{1}{2})$ and $(L/\varepsilon-\sqrt{3},0) +\lambda_\varepsilon( \frac{1}{2}\sqrt{3},\pm\frac{1}{2})$, respectively. 
The constant $\varepsilon\lambda_\varepsilon, \varepsilon>0$ describe the \textit{equilibrium length of the film} and the \textit{equilibrium length of the substrate} respectively, $K_f,K_s$ describe the \textit{elastic constant} of the film and the substrate and $\gamma_f, \gamma_s$ describe the vapor-film and vapor-substrate \textit{surface tension} respectively. 
\medskip

 \textbf{One reference frame.}  Let us denote the elastic contribution related to film atoms by
\begin{align}\label{Elasticrewrittenfilm}
E^{el,film}_\varepsilon(y,h):=\sum_{ i \in \mathcal{L}_\varepsilon(\Omega_h^+)} \sum_{j \in \mathcal{N}_\varepsilon(i)} \varepsilon V_{i,j}^\varepsilon\left(\frac{|y(i)-y(j)|}{\varepsilon} \right),
\end{align} 
and the one related to the substrate by $E^{el,sub}_\varepsilon(y,h):= E^{el}_\varepsilon(y,h)-E^{el,film}_\varepsilon(y,h)$. We observe that 
\begin{align*}
E^{el,film}_\varepsilon(\bar{y},h) = \min_{y} E^{el,film}_\varepsilon(y,h) = 0,
\end{align*}
and 
\begin{align*}
E^{el,sub}_\varepsilon(\tilde{y},h) = \min_{y} E^{el,sub}_\varepsilon(y,h) = 0,
\end{align*}
for any  $\bar{y}_i:= \lambda_\varepsilon \bar{R} i  + \bar{z} $ and $\tilde{y}_i:= \tilde{R} i  + \tilde{z} $ with $\bar{R},\tilde{R} \in SO(s)$ and  $\bar{z}, \tilde{z} \in \mathbb{R}^2$.
This means that we are considering a \textit{reference frame} with respect to the substrate and not with respect to the film. Indeed, we have that $E^{el,film}_\varepsilon(\tilde{y},h)>0$.

\medskip

 \textbf{Relation to atomistic interactions.}   We now observe that under certain assumptions below described  the energies $E_\varepsilon$ can be justified starting from \textit{renormalized interatomic energies}, here denoted by $\mathcal{E}_\varepsilon$, where the energy contribution related to the bonding of film and substrate particles is characterized by interatomic potentials, for example of Lennard-Jones type (see Figure \ref{Fig.Lennard Jones}).  
We introduce $\mathcal{E}_\varepsilon$  as the energy defined for every discrete profile $h : \frac{\sqrt{3}}{2}\varepsilon(\mathbb{Z}+\frac{1}{2}) \cap S^1_L \to \mathbb{R}_+$ and discrete deformation  $y : \mathcal{L}_\varepsilon(\Omega_h) \to \mathbb{R}^2$ by
\begin{align}\label{auxiliary_energy}
\mathcal{E}_\varepsilon(y,h)&=  \sum_{i \in \mathcal{L}_\varepsilon(\Omega_h^+)} \sum_{j \in \mathcal{N}_\varepsilon(i)}
\varepsilon V_{f}^\varepsilon\left(\frac{|y_i-y_j|}{\varepsilon}\right) +\sum_{i,j \in \mathcal{L}_\varepsilon(\Omega^-)}\sum_{j \in \mathcal{N}_\varepsilon(i)}\varepsilon V_{s}\left(\frac{|y_i-y_j|}{\varepsilon}\right), 
\end{align}
where $V_{s}$ and $V_{f}^\varepsilon$  are phenomenological potentials from $\mathbb{R}^+$ to  $\mathbb{R}\cup\{\infty\}$ describing the interactions between substrate- and film-atoms. More precisely we assume the following properties  on $V_{s}$ and $V_{f}^\varepsilon$: 
\begin{itemize}
\item[(i)] $V_{s},V_{f}^\varepsilon \in C^2((0,+\infty))$,
\item[(ii)] $\displaystyle V_{s}(1)=-\gamma_s = \min_{r>0}V_{s}(r)$, $\displaystyle V_{f}^\varepsilon(\lambda_\varepsilon)=-\gamma_f = \min_{r>0}V_{f}^\varepsilon(r)$,
\item[(iii)] $(V_{s})''(1)= K_s$, $(V_{f}^\varepsilon)''(\lambda_\varepsilon)= K_f$.
\end{itemize}
 Notice that interactions between not nearest neighbors are not considered in \eqref{auxiliary_energy}. This can be considered as an approximation when the decay of the two potentials  $V_{s}$ and $V_{f}^\varepsilon$ is fast enough. Furthermore, we observe also that under the volume constraint on the film atoms, i.e., $||h||_{L^1}= V_\varepsilon\in\mathbb{R}$,  $\#\mathcal{L}_\varepsilon(\Omega_h^+)$ and $\#\mathcal{L}_\varepsilon(\Omega^-)$ are constants independent of $h$ and $y$. Therefore, the  quantity
$$
m_\varepsilon:=-\varepsilon(6\gamma_s \#\mathcal{L}_\varepsilon(\Omega^-) + 6\gamma_f \#\mathcal{L}_\varepsilon(\Omega_h^+),   
$$
which corresponds to the (theoretical)  bonding energy of a configuration where all particles are in equilibrium (have 6 nearest neighbors), is a real value. We observe that 
\begin{align}
\mathcal{E}_\varepsilon(y,h)-m_\varepsilon&=  \sum_{i,j \in \mathcal{L}_\varepsilon(\Omega^-)}\sum_{j \in \mathcal{N}_\varepsilon(i)}\varepsilon \left(V_{s}\left(\frac{|y_i-y_j|}{\varepsilon}\right)+ \gamma_s\right)+  \gamma_s \sum_{i \in \mathcal{L}_\varepsilon(\Omega^-)} \varepsilon(6-\#\mathcal{N}_\varepsilon(i)) \nonumber\\
&\quad+ \sum_{i \in \mathcal{L}_\varepsilon(\Omega_h^+)} \sum_{j \in \mathcal{N}_\varepsilon(i)}
\varepsilon \left(V_{f}^\varepsilon\left(\frac{|y_i-y_j|}{\varepsilon}\right)+\gamma_f\right) +  \gamma_f \sum_{i \in \mathcal{L}_\varepsilon(\Omega_h^+)} \varepsilon(6-\#\mathcal{N}_\varepsilon(i)). \label{auxiliar_energy_2} 
\end{align}

By (i)-(iii) and under the assumptions that 
\begin{itemize}
\item[1.] the discrete deformations $y$ are small, i.e., 
\begin{equation}\label{ineq: small deformation}
 \left|\left|\frac{y(i)-y(j)}{\varepsilon}\right|-1\right|  \leq C\sqrt{\varepsilon}
\end{equation}
for every deformation $y$ and  for all  $i,j \in \mathcal{L}_\varepsilon(\Omega_h)$ with  $j \in \mathcal{N}_\varepsilon(i)$,
\item[2.]  the lattice mismatches are small, i.e., $|\lambda_\varepsilon - 1| \leq C\sqrt{\varepsilon}$,
\end{itemize}
we can Taylor expand $V_{s}$ and $V_{f}^\varepsilon$ around their respective minimum points obtaining 
\begin{align}
 V_{s}\left(\left|\frac{y_i-y_j}{\varepsilon}\right|\right)   &=  V_{s}(1)+ V'_{s}(1)\left(\left|\frac{y_i-y_j}{\varepsilon}\right|-1\right) + \frac{1}{2}
 V''_{s}(1)\left(\left|\frac{y_i-y_j}{\varepsilon}\right|-1\right)^2 + o(\varepsilon) \nonumber\\
 &=-\gamma_s + \frac{1}{2}K_s\left(\left|\frac{y_i-y_j}{\varepsilon}\right|-1\right)^2 + o(\varepsilon), \label{taylor_Vs}
\end{align}
and, similarly,
\begin{align}\label{taylor_Vf}
V_{f}\left(\left|\frac{y_i-y_j}{\varepsilon}\right|\right)  = - \gamma_f + \frac{1}{2}K_f\left(\left|\frac{y_i-y_j}{\varepsilon}\right|-\lambda_\varepsilon\right)^2 + o(\varepsilon).
\end{align}

Therefore, from \eqref{auxiliar_energy_2}, \eqref{taylor_Vs}, and \eqref{taylor_Vf} it follows that 
\begin{align}
\mathcal{E}_\varepsilon(y,h) - m_\varepsilon &=  \frac{K_s}{2}\sum_{i,j \in \mathcal{L}_\varepsilon(\Omega^-)}\sum_{j \in \mathcal{N}_\varepsilon(i)}\varepsilon\left(\left|\frac{y_i-y_j}{\varepsilon}\right|-1\right)^2+  \gamma_s  \sum_{i \in \mathcal{L}_\varepsilon(\Omega^-)} \varepsilon(6-\#\mathcal{N}_\varepsilon(i)) \nonumber\\
&\quad+ \frac{K_f}{2} \sum_{i \in \mathcal{L}_\varepsilon(\Omega_h^+)} \sum_{j \in \mathcal{N}_\varepsilon(i)}
\varepsilon\left(\left|\frac{y_i-y_j}{\varepsilon}\right|-\lambda_\varepsilon\right)^2   +  \gamma_f  \sum_{i \in \mathcal{L}_\varepsilon(\Omega_h^+)} \varepsilon(6-\#\mathcal{N}_\varepsilon(i)) + o(1), \label{auxiliar_energy_3} 
\end{align}
where we also used that $\#\mathcal{L}_\varepsilon(\Omega_h) \leq C \varepsilon^{-2}$.
In view of \eqref{auxiliar_energy_3} we can say that, at least at a formal level, the energies $E_\varepsilon$ of the discrete model introduced  in this paper are justified starting from atomistic interactions for $\varepsilon>0$ small enough.

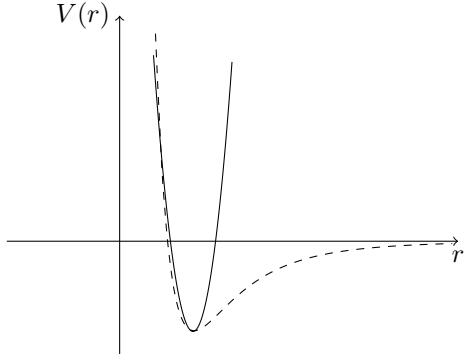
\begin{figure}[htp]
\centering
\begin{tikzpicture}[scale=3]
\draw[->](0.25,0)--++(2,0);
\draw(2.25,0) node[anchor=north]{$r$};
\draw[->](0.75,-0.5)--++(0,1.5);
\draw(0.75,1) node[anchor=east]{$V(r)$};
\draw[smooth,samples=150,domain=0.75:11,dashed] plot ({abs(\x)^(1/3)},{1/(\x*\x*\x*\x)-2^(1/3)/(\x*\x)});
\draw[smooth,samples=150,domain=.9:1.25] plot ({\x},{40*(\x-1.075)^2-.4});
\end{tikzpicture}
\caption{The Lennard--Jones potential and the interaction potential of harmonic springs}
\label{Fig.Lennard Jones}
\end{figure}


\medskip
 \textbf{Rescaled displacements.}    Another important quantity will be the rescaled  \emph{discrete displacements} $u$ associated to a deformation $y$.  For a rotation $R \in SO(2)$ and $b \in \mathbb{R}^2$ we define the rescaled displacement $u:\mathcal{L}_\varepsilon(\Omega_h)\to\mathbb{R}^2$ (omitting the dependence on $\varepsilon$)  associated  to $y$ by 
\begin{align}\label{urescaled}
u(x) = \frac{y(x)-(Rx+b)}{\sqrt{\varepsilon}}.
\end{align}

\medskip

\medskip
 \textbf{The configurational space X.} 
In the following we refer to   triples $(y,u,h)$ where $h$ is a discrete profile defined in , $y$ is a orientation-preserving discrete deformation, $u$ is a discrete displacement associated to $y$, as \emph{discrete triples} and we denote the space of discrete triples by $X_d$.  
In order to perform a discrete-to-continuum analysis it is convenient to embed $X_d$ in the larger  configurational  space 
\begin{equation}\label{Function Space}
\begin{split}
X:=\left\{(y,u,h) : h \in AP(S^1_L), \Omega_h \text{ given by } (\ref{Defintion Omega}),y,u \in L^2_{\mathrm{loc}}(\Omega_h;\mathbb{R}^2) \right\}.
\end{split}
\end{equation}

To do that we identify each discrete profile $h$ with the lower semicontinuous interpolation given by \eqref{h Interpolation}, the discrete deformations   $y : \mathcal{L}_\varepsilon(\Omega_h) \to \mathbb{R}^2$ with the piecewise affine interpolation defined in the following  way: For every  $T=\{i_1,i_2,i_3\} \in \mathcal{T}_\varepsilon$ we set
\begin{align}\label{y interpolate}
y(x) = \sum_{k=1}^3 \lambda_k y(i_k)
\end{align}
for every  $x$ that can be written as
$$x = \sum_{k=1}^3 \lambda_k i_k$$ 
for some $\lambda_k\geq0$, $k=1,2,3$, such that 
$$
 \sum_k \lambda_k =1, 
$$
and $y(x) =0$ if there is no (unique) triple $i_1,i_2,i_3 \in \mathcal{L}_\varepsilon(\Omega_h)$ with $|i_k-i_j|\leq \varepsilon, j\neq k$ and $x \in \mathrm{conv}(i_1,i_2,i_3)$.  Note that, for neighbors though the boundaries $x_1=0$ and $x_1=L$, if   $\{i_1,i_2,i_3\}$ such that $|i_1\pm Le_1 -i_k|\leq \varepsilon,k=2,3$ we define $y$ in the same way as in (\ref{y interpolate}) with $x \in \mathrm{conv}(i_1,\mp Le_1 +i_2,\mp Le_1 +i_3)$,  while  if $\{i_1,i_2,i_3\}$ are such that $|i_1\pm Le_1 -i_2|,|i_1-i_3| \leq \varepsilon$ we define $y$ in the same way as in (\ref{y interpolate}) with $x \in \mathrm{conv}(i_1,\mp Le_1 +i_2,i_3)$. This procedure is well defined up to a set of lebesgue-measure $0$ and we can therefore interpret $y \in L^2_{\mathrm{loc}}(\Omega_h)$. Notice that in this way $y(0,x_2) = y(L,x_2)$ for $\mathcal{H}^1$-a.e. $x_2 \in (-R,h(0))$.

 Similarly,  every discrete displacement  $u:\mathcal{L}_\varepsilon(\Omega_h)\to\mathbb{R}^2$ can be interpreted as an element of $L^2_{\mathrm{loc}}(\Omega_h)$ by identifying it with its piecewise constant interpolation as for $y$ above.   We write that $X_d\subset X$. 

 \begin{definition}[Convergence  in $X$] \label{Definition Convergence} We consider in $X$ the topology $\tau_X$ related to the following definition  of convergence: 
 A sequence $(y_\varepsilon,u_\varepsilon, h_\varepsilon) \subset X $ is said to converge to $(y,u,h) \in X$ in $X$, and we write $(y_\varepsilon,u_\varepsilon,h_\varepsilon) \to (y,u,h)$, if 
\begin{itemize}
\item[(i)] the sets $Q\setminus \Omega_{h_\varepsilon} $ converge to $ Q\setminus \Omega_h$ with respect to the Hausdorff-distance;
\item[(ii)] $y_\varepsilon \to y$ in $L^2_{\mathrm{loc}}(\Omega_h;\mathbb{R}^2)$;
\item[(iii)] $u_\varepsilon \to u$ in $L^2_{\mathrm{loc}}(\Omega_h;\mathbb{R}^2)$.
\end{itemize}
\end{definition}


 We notice that the condition (ii) and  (iii) in Definition \ref{Definition Convergence} are well defined, since  by i)  it follows that for $\Omega' \subset\subset \Omega_h$ we have that $\Omega' \subset\subset \Omega_{h_\varepsilon}$ for $\varepsilon>0$ sufficiently small. Furthermore, observe that that this convergence is metrizable  with a metric that we denote by $d_X$.

\medskip

\medskip
 \textbf{The extended energy.}  Fix $V_\varepsilon >0$ such that there exist a discrete profile $h$ such that $||h||_{L^1(S^1_L)}=V_\varepsilon$.  Now we extend the energy $E_\varepsilon$ defined in \eqref{Definition Energy} for discrete profiles $h$ and deformations $y$ to the whole space $X$ by extending it to $+\infty$ outside  $X_d$.  More precisely, we  write (with slight abuse of notation) $E_\varepsilon : X \to [0,+\infty]$ given by
\begin{align}\label{Definition Energy2}
E_\varepsilon(y,u,h) = \begin{cases}E_\varepsilon(y,h)  & \text{if $(y,u,h)\in X_d$ and $||h||_1 =V_\varepsilon $,}\\
+\infty &\text{otherwise.} 
\end{cases}
\end{align}

\medskip
\subsection{ The limiting model}
 In this subsection we introduce the continuum  limiting model. To this end let us assume that the \emph{discrete lattice mismatch}  $\delta_\varepsilon :=(\lambda_\varepsilon -1)$ satisfy
\begin{align}\label{Mismatch}
\lim_{\varepsilon \to 0 } \varepsilon^{-\frac{1}{2}}\delta_\varepsilon= \delta \in \mathbb{R}.
\end{align}
In the following we refer to $\delta \in \mathbb{R}$ as the \textit{lattice mismatch}.
 For a triple $(y,u,h) \in X$  we define the limit elastic energy by
 \begin{align}
 E^{el}(y,u,h) =\int_{\Omega_h} W_y(x,Eu(x))\mathrm{d}x,
 \end{align}
 where $Eu=\frac{1}{2}(\nabla u + \nabla u^T)$ is the symmetric part of the gradient of $u$ and $W_y : \Omega_h \times \mathbb{R}^{2\times2 } \to [0,+\infty]$ is given by
 \begin{align*}
W_y(x,A) = \begin{cases} \frac{8 K_f}{\sqrt{3}}\left(2|A-\delta E y|^2 + (\mathrm{trace}(A-\delta E y))^2\right)& x \in \Omega_h^+, A \in \mathbb{R}^{2\times 2}, \\
 \frac{8 K_s}{\sqrt{3}}\left(2|A|^2 + (\mathrm{trace}(A))^2\right)&x \in \Omega^-, A \in \mathbb{R}^{2\times 2}.
\end{cases}
 \end{align*}
The limiting surface energy $E^S : AP(S^1_L) \to [0,+\infty]$ is defined by
 \begin{align} \label{ESenergy}
 \begin{split}
 E^S(h) &= \gamma_f\left(\int_{\partial \Omega_h \cap \Omega_h^{1/2}\cap Q^+} \varphi(\nu)\mathrm{d}\mathcal{H}^1+2\int_{\partial \Omega_h \cap \Omega_h^{1}\cap Q} \varphi(\nu)\mathrm{d}\mathcal{H}^1\right)\\&\qquad\qquad\qquad+\gamma_s\wedge\gamma_f\int_{\partial \Omega_h \cap \Omega_h^{1/2}\cap \{x_2=0\}} \varphi(\nu)\mathrm{d}\mathcal{H}^1
 \end{split}
 \end{align}
 where the surface tension $\varphi : \mathbb{R}^2 \to [0,+\infty)$ is defined by
\begin{align}\label{Definitionphi}
 \varphi(\nu) =  \frac{2}{3}\sqrt{3}\left( |\nu_2|+\frac{1}{2}|\sqrt{3}\nu_1-\nu_2|+ \frac{1}{2}|\sqrt{3}\nu_1+\nu_2|  \right).  
\end{align}

Here $\nu(x) \in S^1$ is defined as $\tau^\perp(x)= (-\tau_2(x),\tau_1(x))$ where $\tau(x)=(\tau_1(x),\tau_2(x))$ is the unit tangent vector to the set $\partial \Omega_h$ at the point $x \in \partial \Omega_h$. Since $\partial \Omega_h$ is connected and $\mathcal{H}^1(\partial \Omega_h) < +\infty$ due to \cite{falconer1986geometry} Theorem 3.8 the tangent $\tau$ is well-defined for $\mathcal{H}^1$-a.e. $x \in \partial \Omega_h$. For points $x \in \partial \Omega_h \cap \Omega_h^{1/2}$ the vector $\nu(x)$ is the unit inner normal to the set $\Omega_h$ whenever it exists. The function $x \mapsto \nu(x)$ is Borel-measurable so that for every continuous function $\varphi :\mathbb{R}^2 \to [0,+\infty)$ the functional (\ref{ESenergy}) is well defined.  We also observe that  a discontinuity for the surface tension in \eqref{ESenergy} (apart from the cuts in the graph of $h$) may occur when  $\gamma_s>0$, representing the surface tension between the substrate and the  vapor,  is lower than the surface tension $\gamma_f$ between the film and the  vapor.  If  $\gamma_f < \gamma_s$, the surface energy density is no longer discontinuous and in fact is equal to $\gamma_f \varphi(\nu)$. 

\medskip
We define the limit energy $E:X \to [0,+\infty]$  for every $V >0$ such that $V_\varepsilon \to V >0$   by
\begin{align}\label{Definition Limit Energy}
E(y,u,h) = \begin{cases} E^{el}(y,u,h) + E^S(h)&  \text{if $(y,u,h)\in X_c$ and $||h||_1 = V$,}\\ 
+\infty  &\text{if $(y,u,h)\in X\setminus X_c$,} 
\end{cases}
\end{align} 
where
\begin{align*}
X_c:=\{(y,u,h)\in X\,:\, &\text{$u \in H^1_{\mathrm{loc}}(\Omega_h;\mathbb{R}^2)$,  $y = Rx +b$  for some  $(R,b)\in SO(2)\times \mathbb{R}^2$,}\\
& \text{$h(0)=h(L)$,  and $u(0,x_2)=u(L,x_2)$ for $\mathcal{H}^1\text{-a.e. } x_2 \in (-R,h(0))$} \}.
\end{align*}


\section{Compactness} 
 In this section we show that $\tau_X$ is a good choice of topology, since sequences with equi-bounded energies are pre-compact in $X$ with respect to this topology (see  Proposition \ref{Compactness Proposition}). The main tool is represented by the rigidity estimate proved in \cite{friesecke2002theorem} that we recall here for the reader's convenience.

\medskip

\begin{theorem}[\cite{friesecke2002theorem} Theorem 3.1] \label{TheoremRigidity} Let $N\geq 2$ and let $1<p<\infty$. Suppose that $U\subset \mathbb{R}^N$ is a bounded Lipschitz domain. Then there exists a constant $C=C(U)$ such that for every $u \in W^{1,p}(U)$, there exists a constant matrix $R \in SO(N)$ such that
\begin{align*}
||\nabla u - R||_{L^p(U;\mathbb{R}^{N\times N})} \leq C(U)||\mathrm{dist}(\nabla u, SO(N))||_{L^p(U)}.
\end{align*}
The constant $C(U)$ is invariant under dilation or translation of $U$.
\end{theorem}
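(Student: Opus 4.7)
The plan is to establish this geometric rigidity estimate via a combination of Lipschitz truncation, pointwise projection onto $SO(N)$, and a covering-and-gluing argument. First I would observe that the statement is invariant under the scaling $u(x) \mapsto \lambda^{-1} u(\lambda x)$ and under rigid motions of $U$, which both yields the claimed dilation invariance of $C(U)$ and allows the normalization $|U| \sim 1$. Second, if $\eta := \|\mathrm{dist}(\nabla u, SO(N))\|_{L^p(U)}$ is not small compared to $|U|^{1/p}$, the estimate is trivial by choosing $R = \mathrm{Id}$ and using boundedness of $SO(N)$. So the substantive content lies in the regime where $\eta$ is small and $\nabla u$ sits mostly in a thin tubular neighborhood of $SO(N)$.

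The key analytic step is a Lipschitz truncation. Given $u$ with $\nabla u$ close to $SO(N)$ in $L^p$, I would construct $v \in W^{1,\infty}(U)$ via a maximal-function level-set truncation (in the style of Acerbi--Fusco / Zhang) so that $v = u$ outside a bad set $B$ with $|B| \leq C \mu^{-p} \eta^p$, while $\mathrm{dist}(\nabla v(x), SO(N)) \leq C \mu$ pointwise, for a truncation threshold $\mu$. Choosing $\mu$ so that $C\mu$ lies inside a smooth tubular neighborhood of $SO(N)$ on which the nearest-point projection $\pi : \mathbb{R}^{N \times N} \to SO(N)$ is well-defined and smooth, I can pointwise set $R(x) := \pi(\nabla v(x))$. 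By smoothness of $\pi$, one obtains $|\nabla v(x) - R(x)| \leq C \, \mathrm{dist}(\nabla v(x), SO(N))$, so that $R : U \to SO(N)$ lies in $W^{1,p}$ with $\|\nabla v - R\|_{L^p(U)} \leq C \eta$.

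Now I would exploit the constraint $\operatorname{curl} \nabla v = 0$ together with the manifold structure of $SO(N)$, whose tangent space at $R$ is $R \cdot \mathrm{Skew}(N)$. Writing $R^T \nabla v = \mathrm{Id} + R^T(\nabla v - R)$, the symmetric part of this deviation is quadratic in $|\nabla v - R|$ (since $R^T R = \mathrm{Id}$), while the curl-free condition couples the skew part to $\nabla R$. A Korn-type inequality for maps into $SO(N)$ then yields $\|\nabla R\|_{L^p(U)} \leq C \|\nabla v - R\|_{L^p(U)} \leq C \eta$, from which a Poincar\'e inequality gives $\|R - \bar R\|_{L^p(U)} \leq C \eta$. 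Projecting $\bar R$ onto $SO(N)$ (again using the tubular-neighborhood smoothness) produces the desired constant rotation $R_0 \in SO(N)$, and a triangle inequality $\|\nabla u - R_0\|_{L^p} \leq \|\nabla u - \nabla v\|_{L^p} + \|\nabla v - R\|_{L^p} + \|R - R_0\|_{L^p}$ closes the loop, where the first term is controlled by the bad-set estimate $\|\nabla u\|_{L^p(B)} + |B|^{1/p} \|\nabla v\|_{L^\infty} \leq C \eta$.

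For general bounded Lipschitz $U$ (rather than a cube), I would first prove the above on cubes and then globalize via a Whitney decomposition of $U$ (or a bi-Lipschitz map from $U$ to a standard reference domain): on overlapping cubes $Q_i, Q_j$ one extracts rotations $R_i, R_j$ with $|R_i - R_j| \leq C(\eta_i + \eta_j)/|Q_i \cap Q_j|^{1/p}$, and a chaining argument through the covering graph of bounded diameter (depending only on the Lipschitz constants of $\partial U$) produces a single $R$. The main obstacle, and the step that does most of the work, is the manifold-valued Korn estimate combined with the quantitative Lipschitz truncation: that is where the nonlinear rigidity of $SO(N)$ enters, and extracting the correct $L^p$-dependence (rather than merely $L^2$) is what makes the argument genuinely nontrivial.
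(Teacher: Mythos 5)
This theorem is not proved in the paper at all: it is quoted verbatim as Theorem 3.1 of Friesecke--James--M\"uller \cite{friesecke2002theorem} and used as a black box, so the only meaningful comparison is with the original proof. Measured against that, your sketch has a genuine gap at its central step. After the Lipschitz truncation you define $R(x):=\pi(\nabla v(x))$ by pointwise nearest-point projection and then assert that $R\in W^{1,p}(U;SO(N))$ with $\|\nabla R\|_{L^p}\leq C\|\nabla v-R\|_{L^p}$ via a ``Korn-type inequality for maps into $SO(N)$''. But $R$ is a composition of a smooth map with $\nabla v$, so by the chain rule any control of $\nabla R$ requires control of $\nabla^2 v$, which you do not have ($v$ is only Lipschitz after truncation); a priori $R$ is merely an $L^\infty$ field with values in $SO(N)$, and neither the curl-free constraint on $\nabla v$ nor the quadratic behaviour of the symmetric part of $R^T\nabla v-\mathrm{Id}$ upgrades it to a Sobolev map. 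Moreover, the asserted inequality is not a Korn inequality (Korn bounds the gradient of a displacement by its symmetric part); what you need is to bound the oscillation of the rotation field by the distance of $\nabla v$ to that field, which is exactly the quantitative Liouville statement being proved: for exact gradients in $SO(N)$ it is Liouville/Reshetnyak rigidity, and its quantitative version is the FJM theorem itself. As written, the argument is therefore circular at its core.

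The actual proof never differentiates a pointwise projection. On a cube one approximates the truncated map by a harmonic function, uses interior elliptic estimates together with the rigidity of the linearization at a fixed rotation (where Korn's inequality legitimately enters) to produce one rotation per cube with the right $L^2$ bound, and then passes to a general bounded Lipschitz domain by a covering and weighted Poincar\'e/chaining argument; the case of general exponent $1<p<\infty$ is obtained from the $L^2$ case by a further truncation argument. Your outer layers --- the scaling normalization, the trivial regime $R=\mathrm{Id}$ when $\eta$ is large, the Lipschitz truncation, and the chaining over a covering of a Lipschitz domain --- are consistent with that scheme, but the cube-level estimate, which is the heart of the theorem, is missing and cannot be replaced by the projection-plus-``manifold Korn'' step you propose.
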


In order to apply Theorem \ref{TheoremRigidity} we regroup the elastic energy as the sum of cell-energies on the triangular faces of the lattice.  We  denote  the family of   triangles in $\mathcal{L}_\varepsilon$ by $\mathcal{T}_\varepsilon$.

and the \emph{cell energy} of such a triangle $T =\{i_1,i_2,i_3\}\in \mathcal{T}_\varepsilon$ 
by 
\begin{align*}
W_{\varepsilon,cell}(F,T) =  \underset{ k\neq j}{\sum_{k,j=1}^3} \tilde{V}_{i_k,i_j}^\varepsilon(F(i_k-i_j)), 
\end{align*}
where $\tilde{V}_{i,j}^\varepsilon : \mathbb{R}^2 \to \mathbb{R}$ is given by
\begin{align*}
\tilde{V}_{i,j}^\varepsilon(\xi) = \begin{cases}\frac{1}{2}V_{i,j}^\varepsilon(|\xi|) &i,j \in \mathcal{L}_\varepsilon(\Omega_h),\xi \in \mathbb{R}^2,\\ 0 &\text{if } i \text{ or } j \notin \mathcal{L}_\varepsilon(\Omega_h),\xi \in \mathbb{R}^2
\end{cases}
\end{align*}
with  $V_{i,j}^\varepsilon$  defined by (\ref{DefinitionVi}). For any $T=\{i_1,i_2,i_3\} \in \mathcal{T}_\varepsilon$ we set $\hat{x}_T = \frac{1}{3}(i_1+i_2+i_3)$.  Note that now by this definition and (\ref{Elasticrewritten}) we have that
\begin{align*}
E^{el}_\varepsilon(y,h) = \sum_{i \in \mathcal{T}_\varepsilon}\varepsilon W_{\varepsilon,cell}\left(\nabla y(\hat{x}_T),T\right).
\end{align*}
Note that $\nabla y$ is the gradient of its piecewise affine interpolation given by (\ref{y interpolate}). 
 In order to show that  cell energies $W_{\varepsilon,cell}(\nabla y(\hat{x}_T),T)$ control the distance of $\nabla y(\hat{x}_T)$ (see  Proposition  \ref{PropositionBound}) from the set of rotations we need  the following Lemma.

\begin{lemma}\label{LemmaDistance} There exists a constant $C>0$ such that for all $\lambda>0$ and all $F \in \mathbb{R}^{2\times 2}$ with  $\det F \geq 0$ there holds
\begin{align*}
\mathrm{dist}^2(F,\lambda SO(2)) \leq C W_{\lambda}(F),
\end{align*}
 where $W_{\lambda} : \mathbb{R}^{2\times 2} \to [0,+\infty]$  is defined by
\begin{align*}
W_{\lambda}(F) := \begin{cases}(|Fe|-\lambda)^2 + (|Fv|-\lambda)^2+(|F(v-e)|-\lambda)^2 &\mathrm{det}F \geq 0,\\
+\infty &\text{otherwise,} 
\end{cases}
\end{align*}
with $e=(1,0)$ and  $v = \frac{1}{2}(1,\sqrt{3})$.  
\end{lemma}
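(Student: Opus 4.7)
The plan is to reduce the inequality to a purely matrix-theoretic statement about symmetric positive semi-definite matrices, and then handle three asymptotic regimes. First, by the homogeneities $W_\lambda(F)=\lambda^2 W_1(F/\lambda)$ and $\mathrm{dist}^2(F,\lambda SO(2))=\lambda^2\mathrm{dist}^2(F/\lambda, SO(2))$, it suffices to prove the case $\lambda=1$. Since $\det F\geq 0$, I would use the polar decomposition $F=RU$ with $R\in SO(2)$ and $U=(F^T F)^{1/2}$ symmetric positive semi-definite. The Frobenius norm being $SO(2)$-invariant gives $\mathrm{dist}^2(F,SO(2))=|U-I|^2$, and $|Fw|=|Uw|$ for every $w$ yields $W_1(F)=W_1(U)$. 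So the problem reduces to proving $|U-I|^2\leq C\,W_1(U)$ for every symmetric positive semi-definite $U$.

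The key algebraic input, coming from the equilateral-triangle geometry of the three vectors $w_1=e$, $w_2=v$, $w_3=v-e$, is the identity $\sum_i w_i w_i^T=\tfrac{3}{2}I$. This immediately gives $\sum_i |Uw_i|^2=\tfrac{3}{2}|U|^2$, and also makes the three linear functionals $L_i(H):=w_i\cdot H w_i$ on $\mathrm{Sym}(2)$ linearly independent (a $3\times 3$ determinant computation yielding $3\sqrt{3}/4\neq 0$), so $Q(H):=\sum_i L_i(H)^2$ is a positive definite quadratic form on $\mathrm{Sym}(2)$. Granted this, I split the proof of $|U-I|^2\leq CW_1(U)$ into three regimes depending on thresholds $0<\eta<M$ to be chosen. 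For $|U-I|<\eta$ the Taylor expansion $|Uw_i|=1+L_i(U-I)+O(|U-I|^2)$ gives $W_1(U)=Q(U-I)+O(|U-I|^3)\geq c_1|U-I|^2$ once $\eta$ is small. For $|U|\geq M$ the identity $\sum_i |Uw_i|^2=\tfrac{3}{2}|U|^2$ forces at least one $|Uw_i|\geq |U|/\sqrt{2}$, and $(|Uw_i|-1)^2\geq \tfrac{1}{4}|Uw_i|^2$ for $M$ large, whence $W_1(U)\geq c_2|U|^2\geq c_2'|U-I|^2$. On the compact intermediate set $\{\eta\leq |U-I|,\,|U|\leq M\}\subset\mathrm{Sym}^+(2)$, $W_1$ is continuous and vanishes only at $U=I$ (the system $|Uw_i|=1$ for $i=1,2,3$ forces $U^2=I$ by the same determinant computation, hence $U=I$ by positive semi-definiteness), so $W_1$ has a positive minimum and the desired inequality follows on this region as well.

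The main obstacle in this plan is the positive-definiteness of $Q$ on $\mathrm{Sym}(2)$, which relies essentially on the equilateral-triangle geometry of the reference lattice: if two of the directions $w_i$ were parallel, $Q$ would be degenerate and the estimate would fail, which matches the broader fact that rigidity on non-triangulated lattices is delicate. The only other subtlety is that $R$ in the polar decomposition need not be unique when $\det F=0$, but any admissible choice realizes the Frobenius distance, so this causes no problem. Combining the three regimes, the final constant $C$ can be taken as the maximum of $1/c_1$, $1/c_2'$, and $M^2/\min\{W_1(U):\eta\leq|U-I|,\,|U|\leq M,\,U\in\mathrm{Sym}^+(2)\}$.
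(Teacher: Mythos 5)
Your argument is correct and is in essence the same as the paper's: the paper checks frame indifference, the zero set $\lambda SO(2)$, positive definiteness of $D^2W_\lambda(\lambda\,\mathrm{Id})$ transverse to infinitesimal rotations, and quadratic growth at infinity, and then defers to the scheme of Schmidt's Lemma 3.2, which is precisely the three-regime argument (Taylor expansion near the well, compactness on the intermediate set, growth at infinity) that you carry out explicitly after reducing by scaling to $\lambda=1$ and by polar decomposition to symmetric positive semidefinite $U$. Your version is simply self-contained, verifying the lattice-specific inputs ($\sum_i w_iw_i^T=\tfrac{3}{2}I$ and the linear independence of the functionals $L_i$) directly rather than citing them; the only cosmetic slip is that on the intermediate compact region the bound on $|U-I|^2$ is $(M+\sqrt{2})^2$ rather than $M^2$, which only changes the explicit constant.
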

\begin{proof} The statement follows by checking that $W_{\lambda}$ satisfies
\begin{itemize}
\item[i)] $W_{\lambda}(RF)=W_{\lambda}(F)$ for all $F \in \mathbb{R}^{2\times 2}, R \in SO(2)$.
\item[ii)] $\{W_{\lambda}=0\} \cap \{\det F \geq 0\} = \lambda SO(2)$
\item[iii)] $W_\lambda \in C^2$ in a neighbourhood of $\lambda SO(2)$ and $D^2W_\lambda(\lambda Id)$ is positive definite on the orthogonal complement of the subspace spanned by infinitesimal rotations, that is $F \mapsto A F$, $A^T = -A$.
\item[iv)] $ \displaystyle
\lim_{F \to +\infty} \frac{W_\lambda(F)}{|F|^2} >0
$.
\end{itemize}
The rest of the proof is similar to the one given in \cite{schmidt2009derivation} for Lemma 3.2.
\end{proof}
 The following proposition will be crucial to prove the compactness result \ref{Compactness Proposition}.  

\begin{proposition} \label{PropositionBound} Let $y : \mathcal{L}_\varepsilon(\Omega_h) \to \mathbb{R}^2$ be orientation preserving and let $T \in T_\varepsilon$ be such that $T=\{i_1,i_2,i_3\}$ with $i_1,i_2,i_3 \in \mathcal{L}_\varepsilon(\Omega_h)$, then
\begin{align}
W_{\varepsilon,cell}(\nabla y(\hat{x}_T),T) \geq c\left(\mathrm{dist}^2(\nabla y(\hat{x}_T),SO(2)) - \varepsilon\right).
\end{align}
\end{proposition}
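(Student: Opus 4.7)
The strategy is to apply Lemma \ref{LemmaDistance} on each triangular cell after rewriting $W_{\varepsilon,\mathrm{cell}}$ in its canonical form $W_\lambda(F)$, and then to convert the resulting bound on $\mathrm{dist}^2(F,\lambda SO(2))$ into one on $\mathrm{dist}^2(F,SO(2))$ using the small-mismatch hypothesis $|\lambda_\varepsilon-1|\le C\sqrt\varepsilon$.

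First, the orientation-preserving inequalities \eqref{orientation preserving} applied to the three pairs of vertices of $T$, combined with the positive orientation of a non-degenerate reference triangle, give $\det\nabla y(\hat{x}_T)\ge 0$, so that Lemma \ref{LemmaDistance} is available with $F:=\nabla y(\hat{x}_T)$.

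If $T\subset\Omega^-$ or $T\subset\Omega_h^+$, then every one of the three edge contributions to $W_{\varepsilon,\mathrm{cell}}$ is a quadratic with a common equilibrium length ($1$ or $\lambda_\varepsilon$, respectively); the three edge directions of a lattice triangle agree (up to sign and a rigid rotation of the reference cell) with $\{e,v,v-e\}$ from Lemma \ref{LemmaDistance}, so the total equals, up to a positive multiplicative constant, $W_1(F)$ or $W_{\lambda_\varepsilon}(F)$. Lemma \ref{LemmaDistance} then gives $W_{\varepsilon,\mathrm{cell}}(F,T)\ge c\,\mathrm{dist}^2(F,\lambda SO(2))$, and the elementary inequality
\begin{align*}
\mathrm{dist}^2(F,SO(2))\le 2\,\mathrm{dist}^2(F,\lambda SO(2))+4|\lambda-1|^2,
\end{align*}
together with $|\lambda_\varepsilon-1|^2\le C\varepsilon$, delivers the claim in these uniform cases.

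The main obstacle is the mixed case in which $T$ straddles the film-substrate interface, where different edges contribute quadratics with different equilibrium lengths. An edge whose endpoints lie in different regions contributes a term of the form $\frac{K_s}{4}(r-1)^2+\frac{K_f}{4}(r-\lambda_\varepsilon)^2$; completing the square rewrites this as $\alpha_e(r-\bar\lambda_e)^2+\rho_e$ with $\alpha_e>0$, $|\bar\lambda_e-1|\le C|\lambda_\varepsilon-1|\le C\sqrt\varepsilon$, and $0\le \rho_e\le C\varepsilon$. The three values $\bar\lambda_e$ across the edges of $T$ may differ, but all lie within $C\sqrt\varepsilon$ of $1$; I would replace each by a common $\bar\lambda$ (e.g.\ their weighted average) via $(r-\bar\lambda_e)^2\ge \tfrac12(r-\bar\lambda)^2-|\bar\lambda_e-\bar\lambda|^2$, at an overall cost of $O(\varepsilon)$. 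This reduces the mixed case to an application of Lemma \ref{LemmaDistance} with $\lambda=\bar\lambda$, followed by the same distance-conversion as in the uniform case, yielding the asserted bound with a harmless additive $O(\varepsilon)$. The saving grace throughout is precisely that the small-mismatch hypothesis clusters all possible equilibrium lengths within $O(\sqrt\varepsilon)$ of $1$, so any averaging costs only $O(\varepsilon)$.
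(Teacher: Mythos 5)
Your proof is correct: the determinant argument from \eqref{orientation preserving}, the use of Lemma \ref{LemmaDistance} (whose constant is uniform in $\lambda$, so your applications at $\lambda=\lambda_\varepsilon$ and $\lambda=\bar\lambda$ are legitimate), and the absorption of all mismatch errors into the additive $O(\varepsilon)$ term are sound. The difference with the paper is organizational rather than substantive: the paper makes no case distinction at all. It simply notes that, by convexity, each individual interaction satisfies $(|\xi|-\lambda_\varepsilon)^2\geq c(|\xi|-1)^2-c(1-\lambda_\varepsilon)^2\geq c(|\xi|-1)^2-C\varepsilon$, so every term of the cell energy — film, substrate, or mixed — is bounded below termwise by the corresponding term of $W_1$ minus $C\varepsilon$, and a single application of Lemma \ref{LemmaDistance} with $\lambda=1$ concludes. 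This shift-all-equilibria-to-$1$-first strategy makes your mixed-cell machinery (completing the square on interface edges, averaging the $\bar\lambda_e$, and the final conversion $\mathrm{dist}^2(F,SO(2))\leq 2\,\mathrm{dist}^2(F,\lambda SO(2))+4|\lambda-1|^2$) unnecessary, although each of those steps is individually valid and costs only the same $O(\varepsilon)$. What your write-up does add is an explicit record of where the orientation-preserving condition and the lattice geometry (edge directions matching $\{e,v,v-e\}$ up to a fixed rotation) enter, points the paper leaves implicit.
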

\begin{proof} Since $i_1,i_2,i_3 \in \mathcal{L}_\varepsilon(\Omega_h)$ we have that $V_{i,j}^\varepsilon(\xi) = V_i^\varepsilon(|\xi|)$. By convexity there holds
\begin{align}\label{ineq : conv}
(|\xi|-\lambda)^2= (|\xi|-1 +1-\lambda)^2 \geq c (|\xi| -1)^2 - c(1-\lambda)^2=c (|\xi| -1)^2- c\varepsilon.
\end{align}
Now the claim follows by applying Lemma \ref{LemmaDistance} and (\ref{ineq : conv}) to $W_{\varepsilon,cell}(F,T)$ to obtain
\begin{align*}
W_{\varepsilon,cell}(\nabla y(\hat{x}_T),T) &=   \underset{ k\neq j}{\sum_{k,j=1}^3} \tilde{V}_{i_k,i_j}^\varepsilon(\nabla y(\hat{x}_T)(i_k-i_j)) \geq c  \underset{ k\neq j}{\sum_{k,j=1}^3} V_{i_k}^\varepsilon(|\nabla y(\hat{x}_T)(i_k-i_j)|) \\&\geq c W_1(\nabla y(\hat{x}_T)) - c\varepsilon \geq c\left(\mathrm{dist}^2(\nabla y(\hat{x}_T),SO(2)) - \varepsilon\right).
\end{align*}
This concludes the proof.
\end{proof} 

 We now state the compactness results which is based to the rigidity estimate \ref{TheoremRigidity}.  Since we have domains with varying boundary profile it is not possible, however, to apply the rigidity estimate to the whole domain. This is also reflected by the topology that we have chosen. We need to prove that  rigid motions  around which we linearize can be chosen independently of the compact set $\Omega'\subset\subset \Omega_h$.

\begin{proposition}[Compactness]\label{Compactness Proposition} Let $\lambda_\varepsilon \to 1$ be such that
\begin{align*}
\sup_{\varepsilon >0} \varepsilon^{-\frac{1}{2}}|1-\lambda_\varepsilon| <+\infty
\end{align*} 
and let $(y_\varepsilon,u_\varepsilon,h_\varepsilon) \subset X$ be such that
\begin{align*}
\sup_{\varepsilon >0 } E_\varepsilon(y_\varepsilon,u_\varepsilon,h_\varepsilon) < +\infty 
\end{align*}
Then there exists $(R_\varepsilon,b_\varepsilon) \subset SO(2) \times \mathbb{R}^2$, a subsequence (not relabelled) and $(y,u,h) \in X$,$R\in SO(2),b\in \mathbb{R}^2,||h||_1 = V, u(0,x_2) =u(L,x_2)$  for $\mathcal{H}^1$-a.e. $x_2 \in (-R,h(0))$  such that $R_\varepsilon \to R$,$b_\varepsilon \to b$, $y=Rx+b$ and
\begin{align*}
 \left(y_\varepsilon,\frac{y_\varepsilon-(R_\varepsilon x+b_\varepsilon)}{\sqrt{\varepsilon}},h_\varepsilon\right) \to (y,u,h),
\end{align*}
 with respect to the convergence given in Definition \ref{Definition Convergence}. Moreover we have that 
 \begin{align}\label{Compactnessu}
 \frac{y_\varepsilon-(R_\varepsilon x+b_\varepsilon)}{\sqrt{\varepsilon}}\rightharpoonup u, \text{ in } H^1_{\mathrm{loc}}(\Omega_h).
 \end{align}
\end{proposition}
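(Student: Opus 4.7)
The argument splits into a geometric part (compactness of the profiles $h_\varepsilon$) and an analytic part (selection of rigid motions $R_\varepsilon, b_\varepsilon$ plus weak $H^1_{\mathrm{loc}}$ compactness of the rescaled displacements). The geometric input is cheap and comes from the surface energy, whereas the key work is in producing \emph{one} rotation per $\varepsilon$ that controls $y_\varepsilon$ on all compactly contained subsets of the limiting film domain simultaneously.

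\textbf{Step 1: Compactness of profiles.} From the energy bound and the definition of $E^S_\varepsilon$, the number of lattice atoms in $\mathcal{L}_\varepsilon(\Omega_{h_\varepsilon})$ with a missing nearest neighbor is $O(1/\varepsilon)$, which translates into a uniform bound on $\mathrm{Var}\, h_\varepsilon$ and on $\mathcal{H}^1(\partial \Omega_{h_\varepsilon})$. Standard BV/Helly-type extraction then yields a subsequence, a profile $h \in AP(S^1_L)$ with $\|h\|_1 = V$ and $h(0)=h(L)$, and Hausdorff convergence $Q\setminus \Omega_{h_\varepsilon} \to Q\setminus \Omega_h$. In particular, for every smooth $\Omega' \subset\subset \Omega_h$ we have $\Omega' \subset\subset \Omega_{h_\varepsilon}$ for all $\varepsilon$ small.

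\textbf{Step 2: Rigidity on the fixed substrate.} Regrouping as in Proposition \ref{PropositionBound},
\begin{equation*}
\int_{\Omega^-} \mathrm{dist}^2(\nabla y_\varepsilon, SO(2))\,\mathrm{d}x \leq C\sum_{T \in \mathcal{T}_\varepsilon, T\subset \Omega^-} \varepsilon^2 \bigl(W_{\varepsilon,\mathrm{cell}}(\nabla y_\varepsilon(\hat{x}_T),T) + \varepsilon\bigr) \leq C\varepsilon,
\end{equation*}
since the cell energies on the substrate are minimized at $SO(2)$ (no mismatch) and the term $C\varepsilon$ is harmless. Because $\Omega^-$ is a fixed Lipschitz rectangle, Theorem \ref{TheoremRigidity} applied to the piecewise affine interpolation $y_\varepsilon\big|_{\Omega^-}$ produces $R_\varepsilon^{sub} \in SO(2)$ with $\|\nabla y_\varepsilon - R_\varepsilon^{sub}\|_{L^2(\Omega^-)}^2 \leq C\varepsilon$. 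Subtracting the affine map $R_\varepsilon^{sub} x + b_\varepsilon$ with $b_\varepsilon$ the mean value correction, Poincaré gives $\|y_\varepsilon - R_\varepsilon^{sub} x - b_\varepsilon\|_{H^1(\Omega^-)}^2 \leq C\varepsilon$. Set $R_\varepsilon := R_\varepsilon^{sub}$; by compactness of $SO(2)$ and a translation normalization (fixing $b_\varepsilon$ up to passing to a subsequence), we get $R_\varepsilon \to R$ and $b_\varepsilon \to b$.

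\textbf{Step 3: Invasion of the film.} For any smooth $\Omega' \subset\subset \Omega_h$ with $\Omega' \cap \Omega^- \neq \emptyset$, apply Theorem \ref{TheoremRigidity} on the bounded Lipschitz set $\Omega'\cup\Omega^-$ (for $\varepsilon$ small it is contained in $\Omega_{h_\varepsilon}$). This produces a rotation $\tilde R_\varepsilon(\Omega') \in SO(2)$ with
\begin{equation*}
\|\nabla y_\varepsilon - \tilde R_\varepsilon(\Omega')\|_{L^2(\Omega'\cup\Omega^-)}^2 \leq C(\Omega')\,\varepsilon.
\end{equation*}
Comparing this estimate on the substrate piece with the estimate from Step 2 gives $|\tilde R_\varepsilon(\Omega') - R_\varepsilon|^2 |\Omega^-| \leq C(\Omega')\varepsilon$, so we may \emph{replace} $\tilde R_\varepsilon(\Omega')$ by the common rotation $R_\varepsilon$ up to an error of order $\sqrt{\varepsilon}$. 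Exhausting $\Omega_h$ by an increasing family of such smooth sets $\Omega'_k$ (each touching $\Omega^-$ thanks to the graph structure of $\Omega_h^+$), we obtain the uniform local bounds
\begin{equation*}
\|\nabla y_\varepsilon - R_\varepsilon\|_{L^2(\Omega'_k)}^2 \leq C_k\,\varepsilon, \qquad \|y_\varepsilon - R_\varepsilon x - b_\varepsilon\|_{L^2(\Omega'_k)}^2 \leq C_k\,\varepsilon.
\end{equation*}

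\textbf{Step 4: Weak convergence of displacements.} Setting $u_\varepsilon := (y_\varepsilon - R_\varepsilon x - b_\varepsilon)/\sqrt{\varepsilon}$, the previous bound reads $\|u_\varepsilon\|_{H^1(\Omega'_k)} \leq C_k$, so by a diagonal extraction there is $u \in H^1_{\mathrm{loc}}(\Omega_h;\mathbb{R}^2)$ with $u_\varepsilon \rightharpoonup u$ in $H^1_{\mathrm{loc}}(\Omega_h)$ and strong $L^2_{\mathrm{loc}}$ convergence. The strong $L^2_{\mathrm{loc}}$ convergence $y_\varepsilon \to Rx+b =: y$ follows from $y_\varepsilon - (R_\varepsilon x + b_\varepsilon) = \sqrt{\varepsilon}\, u_\varepsilon \to 0$ in $L^2_{\mathrm{loc}}$. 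The periodicity identity $u(0,x_2)=u(L,x_2)$ is inherited by passing to the limit in the discrete lateral identification built into the interpolation (see the construction of $y$ in \eqref{y interpolate} for neighbors across $\{x_1=0\}$ and $\{x_1=L\}$).

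\textbf{Main obstacle.} The delicate point is Step 3: because the film boundary is only BV (not uniformly Lipschitz), the Friesecke--James--M\"uller constant cannot be taken uniform in $\varepsilon$. The remedy is to pay the constant $C(\Omega')$ on each \emph{fixed} smooth interior subset and synchronize all these local rotations with the single substrate rotation $R_\varepsilon$ via the fixed reference set $\Omega^-$; the graph structure of $\Omega_h^+$ guarantees that every connected component of the exhaustion can be connected to the substrate within $\Omega_h$, which is exactly what the ``invasion'' sentence in the introduction is about.
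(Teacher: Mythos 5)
Your argument is correct in substance and follows essentially the same mechanism as the paper's proof: BV/Hausdorff compactness of the profiles from the surface energy, the cell-energy regrouping of Proposition \ref{PropositionBound} combined with Theorem \ref{TheoremRigidity} on Lipschitz subdomains invading $\Omega_h$, synchronization of the local rotations by comparing them on a fixed overlap region, and then uniform local $H^1$ bounds for $u_\varepsilon$, Rellich, and a trace argument for the lateral periodicity. The one real difference is where you anchor the common rigid motion: you apply rigidity on the fixed rectangle $\Omega^-$ to get $R_\varepsilon$ and then on sets $\Omega'\cup\Omega^-$, comparing on $\Omega^-$; the paper instead exhausts $\Omega_h$ by the subgraph domains $\Omega_\lambda$ of the shifted Yosida transforms $\tilde h_\lambda=h_\lambda-1/\lambda$ (each already Lipschitz and containing essentially all of the substrate), compares $R_\varepsilon^\lambda$ with $R_\varepsilon^\mu$ on the smaller set, and fixes the rotation of $\Omega_1$. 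Your substrate anchoring is arguably the more transparent choice, but note an imprecision in Step 3: for a generic smooth $\Omega'\subset\subset\Omega_h$ the union $\Omega'\cup\Omega^-$ need \emph{not} be a Lipschitz domain (tangential contact of $\partial\Omega'$ with the line $\{x_2=0\}$ produces cusp-like regions in the complement), so the constant in Theorem \ref{TheoremRigidity} is not available as stated; this is easily repaired by taking the exhausting sets to be subgraph domains of Lipschitz functions strictly below $h$ — which is exactly what the paper's Yosida construction delivers — or by requiring $\partial\Omega'$ to cross $\{x_2=0\}$ transversally, after which your comparison and diagonal argument go through verbatim. The remaining points you gloss (the uniform $L^\infty$ bound on $h_\varepsilon$ needed for Blaschke, the $L^1$ convergence $h_\varepsilon\to h$ from \cite{fonseca2007equilibrium} used to pass the volume constraint, and the boundedness/normalization of $b_\varepsilon$) are treated at essentially the same level of detail in the paper, so they do not constitute gaps relative to it.
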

\begin{proof} Let $\delta_\varepsilon=(1-\lambda_\varepsilon) \to 0, (y_\varepsilon,u_\varepsilon,h_\varepsilon) \subset X$ satisfy the assumptions of Proposition \ref{Compactness Proposition}, that is there exists $0<C<+\infty$ such that
\begin{align*}
\sup_{\varepsilon >0} \varepsilon^{-\frac{1}{2}}|\delta_\varepsilon| \leq C \text{ and } \sup_{\varepsilon >0} E_\varepsilon(y_\varepsilon,u_\varepsilon,h_\varepsilon) \leq C.
\end{align*}
We first prove i) of Definition \ref{Definition Convergence}.
One can check that for piecewise constant functions it holds
\begin{align*}
|Dh_\varepsilon|(S^1_L)=\mathrm{Var} h_\varepsilon = \sum_{i \in \frac{\sqrt{3}}{2}\varepsilon(\mathbb{Z}+\frac{1}{2}) \cap S^1_L} \left|h_\varepsilon(i+\frac{1}{2}\sqrt{3}\varepsilon) - h_\varepsilon(i)\right|.
\end{align*}
Fix $i \in \frac{\sqrt{3}}{2}\varepsilon(\mathbb{Z}+\frac{1}{2}) \cap S^1_L$, we have that
\begin{align}\label{ibound}
\underset{j_1\in  \{i,i+\frac{1}{2}\sqrt{3}\varepsilon\}}{\sum_{(j_1,j_2) \in \mathcal{L}_\varepsilon(\Omega_h^+)}} \varepsilon(6 - \#\mathcal{N}_\varepsilon((j_1,j_2)) \geq \left|h_\varepsilon(i+\frac{1}{2}\sqrt{3}\varepsilon) - h_\varepsilon(i)\right|,
\end{align}
since for all $j_2 \in \{\min\{h_\varepsilon(i),h_\varepsilon\left(i+\frac{1}{2}\sqrt{3}\varepsilon\right)\},\ldots, \max\{h_\varepsilon(i),h_\varepsilon\left(i+\frac{1}{2}\sqrt{3}\varepsilon\right)\}\}$ we have that either $\#\mathcal{N}_\varepsilon((i,j_2)<6$ or $\#\mathcal{N}_\varepsilon((i+\frac{1}{2}\sqrt{3}\varepsilon,j_2)<6$. Summing over $i \in \frac{\sqrt{3}}{2}\varepsilon(\mathbb{Z} +\frac{1}{2})\cap S^1_L$ and using (\ref{ibound}) we obtain
\begin{align*}
\mathrm{Var}h_\varepsilon \leq C( E_\varepsilon(y_\varepsilon,h_\varepsilon) +1) \leq C <+\infty.
\end{align*}
 Moreover, since there exist $\{x_\varepsilon\}_\varepsilon \subset [0,L]$ such that
\begin{align*}
\sup_{\varepsilon>0} h_\varepsilon(x_\varepsilon) \leq  \sup_\varepsilon\fint_0^L h_\varepsilon(x) \mathrm{d}x \leq C 
\end{align*} 
we have that
$
h_\varepsilon(x) \leq h_\varepsilon(x_\varepsilon) + \mathrm{Var}h_\varepsilon \leq C 
$ for all $x \in [0,L]$. Now for all $\varepsilon >0$ we have that $\Omega_{h_\varepsilon} \subset \{(x_1,x_2) : 0 < x_1 < L,-R< x_2 < l\}$ for some $l >0$. Hence the compactness of the sets $Q\setminus \Omega_{h_\varepsilon}$ is equivalent to the compactness of the equibounded sets $\{(x_1,x_2) : 0<x_1<L,-R < x_2 \leq l\} \setminus \Omega_{h_\varepsilon}$, which follows from the Blaschke Compactness Theorem (cf. Theorem 6.1 in  \cite{ambrosio2000functions}).  Thus we may assume that, up to subsequecens (not relabelled) $Q\setminus \Omega_{h_\varepsilon}$ converges in the Hausdorff-metric to a set $Q\setminus \Omega$. Next we identify $\Omega$ with $\Omega_h$, where
\begin{align}\label{hdefinition}
h(x) = \inf\left\{\liminf_{\varepsilon \to 0} h_\varepsilon(x_\varepsilon) : x_\varepsilon \to x\right\}.
\end{align} 
Note that a sequence $K_\varepsilon$ of compact sets contained in a compact set $U$ converge to $K$ in the Hausdorff-metric if and only if the following hold true
\begin{itemize}
\item[i)] for all $x \in K$, there exists $x_\varepsilon \to x$ such that $x_\varepsilon \in K_\varepsilon$,
\item[ii)] for all $\{x_\varepsilon\} $ such that $x_\varepsilon \in K_\varepsilon$ and $x_\varepsilon \to x$ we have that $x \in K$.
\end{itemize}
Let $x = (x_1,x_2) \in Q\setminus \Omega$, by i) there exist $x_\varepsilon = (x_1^\varepsilon,x_2^\varepsilon) \in Q\setminus \Omega_{h_\varepsilon}$ and $x_\varepsilon \to x$. Since $x_\varepsilon \in Q\setminus \Omega_{h_\varepsilon} $ and we have that $x_\varepsilon \to x$ we obtain
\begin{align*}
h(x_1) \leq \liminf_{\varepsilon \to 0} h_\varepsilon(x_\varepsilon^1)\leq \liminf_{\varepsilon \to 0} x_\varepsilon^2 = x_2.
\end{align*}
Hence $x \in Q\setminus \Omega_h$ which implies $Q\setminus \Omega \subset Q\setminus \Omega_h$. Now let $x=(x_1,x_2) \in Q\setminus \Omega_h$. We have
\begin{align*}
h(x_1) = \inf\{\liminf_{\varepsilon \to 0} h_\varepsilon(y_\varepsilon) : y_\varepsilon \to x_1\} \leq x_2.
\end{align*}
Let $x_\varepsilon=(x_\varepsilon^1,x_\varepsilon^2)$ be such that $h_\varepsilon(x_\varepsilon^1) \to h(x_1)$ and $x_\varepsilon^1 \to x_1$ and $x_\varepsilon^2 = \max\{x_2,h_\varepsilon(x_\varepsilon^1)\}$. We have that $x_\varepsilon \in Q\setminus \Omega_{h_\varepsilon}$ and $x_\varepsilon \to x$. By ii) it follows that $x \in Q\setminus \Omega$ which implies $Q\setminus \Omega_h \subset Q\setminus \Omega$. Finally we need to show that $h $ is lower semicontinuous and $\mathrm{Var}h <+\infty$. We have that (up to a subsequence) $Q\setminus \Omega_{h_\varepsilon} \to Q\setminus \Omega_h$ with respect to the Hausdorff-distance with $h$ given by (\ref{hdefinition}).

 By its definition it is easy to check that $h$ is a lower-semicontinuous function. 
Due to \cite{fonseca2007equilibrium} Lemma 2.5 we have that $h_\varepsilon \to h$ in $L^1(S^1_L)$ and therefore
\begin{align*}
V = \lim_{\varepsilon \to 0} V_\varepsilon = \lim_{\varepsilon \to 0} ||h_\varepsilon||_{L^1(S^1_L)} = ||h||_{L^1(S^1_L)}.
\end{align*}
The constraint $||h||_{L^1(S^1_L)} = V$ is therefore satisfied. By Blaschke's Compactness Theorem we have that there exists $K\subset \mathbb{R}^2$ compact and a  subsequence (not relabelled) such that $\partial \Omega_{h_\varepsilon} \to K$ with respect to the Hausdorff-convergence. It can be checked that $\partial \Omega_h \subset K$. By Golab's Theorem there holds
\begin{align*}
\mathcal{H}^1(\partial \Omega_h) \leq \mathcal{H}^1(K) \leq \liminf_{\varepsilon\to 0} \mathcal{H}^1(\partial \Omega_{h_\varepsilon}).
\end{align*}
By Lemma 2.1 in \cite{fonseca2007equilibrium} we have that $\mathrm{Var}h < +\infty$ and $\mathrm{i)}$ follows.
 
 Next we prove ii) and iii). We show that there exists $\{R_\varepsilon\}_\varepsilon \subset SO(2),\{b_\varepsilon\}_\varepsilon \subset \mathbb{R}^2$ such that for any $\Omega' \subset\subset \Omega_h$ there exists a constant $C=C(\Omega')$ such that 
\begin{align}\label{Compact H1 Bound}
||u_\varepsilon||_{H^1(\Omega')} \leq C,
\end{align}
 where
$u_\varepsilon : \Omega_h \to \mathbb{R}^2$ is defined by
\begin{align*}
u_\varepsilon(x) = \frac{y_\varepsilon(x) -(R_\varepsilon x+b_\varepsilon)}{\sqrt{\varepsilon}}.
\end{align*}

 Note that each $\Omega' \subset\subset \Omega_h$ is contained in $\Omega_{h_\lambda}$ for $\lambda>0$ big enough, where $h_\lambda : S^1_L \to [0,+\infty)$ is the $\lambda$-Yosida Transform of $h$ given by
\begin{align*}
h_\lambda(x) = \inf\left\{h(y) + \lambda|x-y|, y \in S^1_L \right\}.
\end{align*}
This follows, since $h$ is lower semicontinuous and therefore $h_\lambda$ $\Gamma$-converges to $h$ (with respect to the usual distance on $\mathbb{R}$). The $\Gamma$-convergence is equivalent to the Kuratowski-convergence of the epigraphs (\cite{dal2012introduction}, Theorem 4.16) which in turn is equivalent to the Hausdorff-convergence of the sets $Q\setminus \Omega_{h_\lambda} $ to $Q\setminus \Omega_{h}$ (already noted in the proof of $\Omega = \Omega_h$ above). Moreover we translate $h_\lambda$ away from $h$ such that we are sure not touch the profile 
\begin{align*}
\tilde{h}_{\lambda}(x) = h_\lambda(x)-\frac{1}{\lambda}
\end{align*}   
In the following $C_\lambda$ (resp. $C_{\lambda,\mu}$) denotes a constant depending on $\lambda$ (resp. on $\lambda$ and $\mu$).
It still holds true that every $\Omega' \subset \Omega_{\tilde{h}_\lambda}$ for $\lambda>0$ big enough and furthermore we have that $\Omega_{\tilde{h}_\lambda}^+  \subset \Omega_h^+$. It suffices to prove the claim for 
\begin{align*}
\Omega_\lambda = \Omega_{\tilde{h}_\lambda} \cap \left(\left(0,L\right)\times \mathbb{R}\right)
\end{align*}
since for any $\Omega'\subset\subset \Omega_h$ there exists $\lambda>0$ big enough such that $\Omega'\subset \Omega_\lambda$. We now have that there exists $R_\varepsilon^\lambda \in SO(2)$ such that
\begin{align}\label{Rigidity}
\begin{split}
E_\varepsilon(y_\varepsilon,u_\varepsilon, h_\varepsilon) 
&\geq\underset{T \cap \Omega_\lambda \neq \emptyset}{\sum_{T \in \mathcal{T}_\varepsilon}}\varepsilon W_{\varepsilon,cell}(\nabla y_\varepsilon(\hat{x}_T),T)\\
&\geq C\underset{T \cap \Omega_\lambda \neq \emptyset}{\sum_{T \in \mathcal{T}_\varepsilon}}\varepsilon\left(\mathrm{dist}^2(\nabla y_\varepsilon(\hat{x}_T),SO(2))- \varepsilon\right)  \\
&\geq \frac{C}{\varepsilon}\int_{\Omega_\lambda}\mathrm{dist}^2(\nabla y_\varepsilon(x),SO(2))\mathrm{d}x - C|\Omega_h| \\
&\geq \frac{C_\lambda}{\varepsilon}\int_{\Omega_\lambda}|\nabla y_\varepsilon(x) -R^\lambda_\varepsilon|^2\mathrm{d}x-C|\Omega_h|.
\end{split}
\end{align} 
Where the first inequality is due to the fact that $W_{\varepsilon,cell} \geq 0$ and the fact the do not count such cell-energies in the second term. The second follows since for all $T \in \mathcal{T}_\varepsilon$ such that $T \cap \Omega_\lambda \neq \emptyset$ we have that $i_1,i_2,i_3 \in \mathcal{L}_\varepsilon(\Omega_h)$ and Proposition \ref{PropositionBound}.
The third inequality is due to the fact the the summation can be seen as an integration of piecewise constant function on the triangles $T \in \mathcal{T}_\varepsilon$, where $|T| \sim \varepsilon^2$ together with $\Omega_\lambda \subset \Omega_h$ and the last inequality follows due to Theorem \ref{TheoremRigidity}. 
Since $\Omega_\lambda$ is a Lipschitz set by Poincar\'es inequality we have that there exists $b_\varepsilon \in \mathbb{R}^2$ such that
\begin{align}\label{Poincare}
\int_{\Omega_\lambda}|y_\varepsilon(x) -(R^\lambda_\varepsilon x + b^\lambda_\varepsilon)|^2\mathrm{d}x\leq C_\lambda\int_{\Omega_\lambda}|\nabla y_\varepsilon(x) -R^\lambda_\varepsilon|^2\mathrm{d}x. 
\end{align}
Now fix $\mu>\lambda >0$. We have that there exist $R_\varepsilon^{\lambda},R_\varepsilon^{\mu}\in SO(2)$ such that (\ref{Rigidity}) holds true. We have
\begin{align}\label{Rotationdifference}
\begin{split}
|R_\varepsilon^\lambda-R_\varepsilon^\mu|^2 &\leq C_{\lambda,\mu}\left(\fint_{\Omega_\lambda} |R_\varepsilon^\lambda-\nabla y(x)|^2 \mathrm{d}x + \fint_{\Omega_\lambda} |R_\varepsilon^\mu-\nabla y(x)|^2 \mathrm{d}x \right) \\&\leq C_{\lambda,\mu}\varepsilon \left( E_\varepsilon(y_\varepsilon,u_\varepsilon,h_\varepsilon)+ |\Omega_h| \right)\leq C_{\lambda,\mu}\varepsilon
\end{split}
\end{align}
and again by convexity and (\ref{Poincare}) we have
\begin{align}\label{translationdifference}
\nonumber
|b_\varepsilon^\lambda-b_\varepsilon^\mu|^2 &\leq C_{\lambda,\mu}\Big(|R_\varepsilon^\lambda-R_\varepsilon^\mu|^2 +\fint_{\Omega_\lambda} |R_\varepsilon^\lambda x +b_\varepsilon^\lambda-y(x)|^2 \mathrm{d}x   +\fint_{\Omega_\lambda} |R_\varepsilon^\mu x +b_\varepsilon^\mu-y(x)|^2 \mathrm{d}x   \Big)\\&\leq C_{\lambda,\mu}\left(\varepsilon + \fint_{\Omega_\lambda} |R_\varepsilon^\lambda-\nabla y(x)|^2 \mathrm{d}x + \fint_{\Omega_\lambda} |R_\varepsilon^\mu-\nabla y(x)|^2 \mathrm{d}x\right) \leq C_{\lambda,\mu}\varepsilon.
\end{align}
Assume that $\Omega_1 \neq \emptyset$ and define $u_\varepsilon : \Omega_h \to \mathbb{R}^2 $ by
\begin{align*}
u_\varepsilon(x) = \frac{y_\varepsilon(x)-(R_\varepsilon^1 x +b_\varepsilon^1)}{\sqrt{\varepsilon}}.
\end{align*}
Now again by convexity (\ref{Rigidity}),(\ref{Rotationdifference}) and (\ref{translationdifference}) we obtain for $\lambda>0$
\begin{align}\label{ineq : uepsbound}
\begin{split}
||u_\varepsilon||_{H^1(\Omega_\lambda)}^2 &\leq  \frac{C_\lambda}{\varepsilon}\left(\int_{\Omega_\lambda}|\nabla y_\varepsilon(x) -R_\varepsilon^\lambda|^2\mathrm{d}x +|b_\varepsilon^1-b_\varepsilon^\lambda|^2 +|R_\varepsilon^1-R_\varepsilon^\lambda|^2 \right) \\&\leq C_{\lambda,1} \left( E_\varepsilon(y_\varepsilon,u_\varepsilon,h_\varepsilon)+1 \right) \leq C_\lambda
\end{split}
\end{align}
and the claim follows.  This implies (\ref{Compactnessu}) and by the Rellich--Kondrachov Theorem ii) as well as iii). It remains to prove that $u(0,x_2) = u(L,x_2) $ for $\mathcal{H}^1$-a.e. $x_2 \in (-R,h(0))$. Now by the definition of $u_\varepsilon$ we have that $u_\varepsilon(0,x_2)= u_\varepsilon(L,x_2)$ for $\mathcal{H}^1$-a.e. $x_2 \in (-R,h_\lambda(0))$. Now by (\ref{ineq : uepsbound}) we have $||u_\varepsilon||_{H^1(\Omega_\lambda)} \leq C_\lambda $ and therefore also $u_\varepsilon \rightharpoonup u$ weakly in $H^1(\Omega_\lambda)$. By the continuity of the trace operator with respect to weak convergence in $H^1$ we have that $ u(0,x_2)= u(L,x_2)$ for $\mathcal{H}^1$-a.e. $x_2 \in (-R,h_\lambda(0))$. Now, since $h_\lambda \to h$ pointwise, for any $ x_2 < h(0)$ there exists $\lambda>0$ such that $x_2 \leq  h_\lambda(0)$. We can therefore conclude that $ u(0,x_2)= u(L,x_2)$ for $\mathcal{H}^1$-a.e. $x_2 \in (-R,h(0))$. This concludes the  proof. 

\end{proof}

\section{Asymptotic Analysis}

In this section we state the main result and perform all the analysis to proof it in a rigorous way.

\begin{theorem}[Main Theorem] \label{main_theorem} Let $\varepsilon \to 0$ and let $\delta_\varepsilon \to 0$ satisfy (\ref{Mismatch}). Let $E_\varepsilon : X \to [0,+\infty]$ be defined by (\ref{Definition Energy2}) and $E : X \to [0,+\infty]$ be defined by (\ref{Definition Limit Energy}). Then $E_\varepsilon$  $\Gamma$-converges to $E$ with respect to the topology defined in Definition\ref{Definition Convergence}.
\end{theorem}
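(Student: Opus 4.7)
The plan is to establish the two $\Gamma$-convergence inequalities separately: the $\liminf$ inequality along arbitrary converging sequences, and the $\limsup$ inequality by explicit construction. For the $\liminf$, I may restrict to sequences $(y_\varepsilon,u_\varepsilon,h_\varepsilon)\to(y,u,h)$ in $X$ with $\sup_\varepsilon E_\varepsilon(y_\varepsilon,u_\varepsilon,h_\varepsilon)<+\infty$, since otherwise the inequality is trivial. Under this finite-energy assumption, Proposition \ref{Compactness Proposition} applies: after possibly redefining the associated displacements via suitable $(R_\varepsilon,b_\varepsilon)\in SO(2)\times\mathbb{R}^2$, I obtain $y=Rx+b$, $\|h\|_1=V$, $u\in H^1_{\mathrm{loc}}(\Omega_h;\mathbb{R}^2)$, $u_\varepsilon\rightharpoonup u$ weakly in $H^1_{\mathrm{loc}}$, and the lateral periodicity of $u$, so that $(y,u,h)\in X_c$ and the candidate limit $E(y,u,h)$ is finite.

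\textbf{Liminf inequality.} I split $E_\varepsilon=E^S_\varepsilon+E^{el}_\varepsilon$ and argue for each term independently. For the surface part, the discrete missing-bond count $E^S_\varepsilon(h_\varepsilon)$ can be identified with an anisotropic lattice perimeter for the set $\Omega_{h_\varepsilon}$ whose directional weights are dictated by the triangular lattice and therefore coincide with $\varphi$ in the limit; by the Hausdorff convergence $Q\setminus\Omega_{h_\varepsilon}\to Q\setminus\Omega_h$ and the lower semicontinuity result of \cite{KP} for anisotropic surface functionals defined on sets, I obtain in the limit exactly \eqref{ESenergy}, with the doubling factor on inner cuts arising because any discrete envelope of a crack must traverse it on both sides, and the factor $\gamma_s\wedge\gamma_f$ along $\{x_2=0\}$ reflecting the energetic option to cover the substrate with film atoms. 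For the elastic part, I fix $\Omega'\subset\subset\Omega_h$, exploit the cell-energy reformulation together with Proposition \ref{PropositionBound} and Theorem \ref{TheoremRigidity} to obtain equi-integrable symmetric strains $Eu_\varepsilon$ on $\Omega'$, and then Taylor-expand each $V^\varepsilon_{i,j}$ around its equilibrium length, using $|\lambda_\varepsilon-1|=O(\sqrt{\varepsilon})$ so that the mismatch contributes the tensor $\delta\chi_{\{x_2>0\}} R$ in the leading order. Summing the contributions of the three bond directions in each equilateral triangle yields the isotropic elastic density $W_y(x_2,Eu(x))$, and a convexity lower-bound argument combined with weak convergence gives the $\liminf$ on $\Omega'$. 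Finally, I invade $\Omega_h$ by the Lipschitz sets $\Omega_{\tilde h_\lambda}$ built from the shifted Yosida transform (as in the proof of Proposition \ref{Compactness Proposition}) and let $\lambda\to\infty$ by monotone convergence.

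\textbf{Limsup via density.} For the upper bound I fix $(y,u,h)\in X_c$ with $y=Rx+b$ and build a recovery sequence through three successive density steps, concluded by an explicit construction. First, I approximate $h$ from below by Lipschitz profiles $\tilde h_k$ obtained as the $k$-Yosida transform of $h$ shifted down by $1/k$, so that $\Omega_{\tilde h_k}\subset\subset\Omega_h$ and, by an anisotropic variant of the computation in \cite{chambolle2002computing}, $E^S(\tilde h_k)\to E^S(h)$; the elastic energy automatically satisfies $E^{el}(y,u_{|\Omega_{\tilde h_k}},\tilde h_k)\to E^{el}(y,u,h)$ by monotone convergence on $\Omega_h$. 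Next I restore the volume constraint $\|\tilde h_k\|_1=V$ by a small compactly supported perturbation of $\tilde h_k$ away from its extremes, refining the construction of \cite{DP1,fonseca2007equilibrium} to accommodate the anisotropic $\varphi$. Then I approximate $u$ by smooth $u_k\in C^\infty(\overline{\Omega_{\tilde h_k}};\mathbb{R}^2)$ matching the lateral periodicity. For a Lipschitz profile $h$ and a smooth displacement $u$, I define $h_\varepsilon$ to be the canonical discretization of $h$ on $\frac{\sqrt{3}}{2}\varepsilon(\mathbb{Z}+\tfrac12)\cap S^1_L$, minimally adjusted to meet $\|h_\varepsilon\|_1=V_\varepsilon$, and set $y_\varepsilon(i):=Ri+b+\sqrt{\varepsilon}\,u(i)$ on $\mathcal{L}_\varepsilon(\Omega_{h_\varepsilon})$ with $u_\varepsilon$ its rescaled displacement; the orientation-preserving condition holds for $\varepsilon$ small by smoothness of $u$. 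Direct Riemann-sum and Taylor estimates on each triangular cell, using $\delta_\varepsilon=\delta\sqrt{\varepsilon}+o(\sqrt{\varepsilon})$, give $E^{el}_\varepsilon(y_\varepsilon,h_\varepsilon)\to E^{el}(y,u,h)$, while a careful counting of boundary bonds yields $E^S_\varepsilon(h_\varepsilon)\to E^S(h)$. A standard diagonal argument in the three density parameters completes the upper bound.

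\textbf{Main obstacle.} The most delicate point is the interplay between the surface-energy limit and the volume constraint: producing a Lipschitz approximation of a general $h\in AP(S^1_L)$ (only lower semicontinuous, of bounded variation, possibly with vertical cracks) whose anisotropic lattice perimeter converges to \eqref{ESenergy}, including the doubled cut contribution and the regime-dependent $\gamma_s\wedge\gamma_f$ prefactor, and simultaneously restoring $\|h\|_1=V$ without spoiling this convergence, requires a delicate quantitative analysis of the Yosida transform combined with a localized anisotropic perturbation. Equally subtle on the elastic side is identifying the correct dependence of $W_y$ on the limit rotation $R$: because the mismatch strain $\delta E_0(x_2,y)$ must be measured with respect to $R$ (a frame-indifference effect), the proof has to retain the deformation $y$ as an independent bookkeeping variable alongside $u$ throughout, which is why the discrete space $X_d$ already records the pair $(y,u)$.
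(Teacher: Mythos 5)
Your proposal follows the paper's route essentially step by step: the liminf is split into the surface part (semicontinuity of anisotropic set functionals from \cite{KP}, as in Proposition~\ref{Proposition Gamma-liminf Surface}) and the elastic part (localization on $\Omega'\subset\subset\Omega_h$, cell energies, rigidity, Taylor expansion, weak lower semicontinuity, invasion of $\Omega_h$), and the limsup goes through the same density chain (Yosida approximation, volume correction in the spirit of \cite{DP1}, smooth displacements) followed by the explicit discretization $y_\varepsilon(i)=Ri+b+\sqrt{\varepsilon}\,u(i)$. There is, however, a genuine gap in your recovery sequence: you take $h_\varepsilon$ to be ``the canonical discretization of $h$'', with no case distinction between the regimes $\gamma_f\leq\gamma_s$ and $\gamma_f>\gamma_s$. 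On the set $\{h=0\}$, which in general has positive length and is preserved by all the density steps (indeed the paper proves $\{h_n=0\}=\{h=0\}$ for the Yosida approximants in Lemma~\ref{Prop SurfaceContinuousApprox}), your construction leaves the substrate exposed, so the topmost atoms there are substrate atoms and their missing bonds are weighted by $\gamma_s$. In the wetting regime $\gamma_f<\gamma_s$ this gives
\begin{align*}
\limsup_{\varepsilon\to 0} E^S_\varepsilon(h_\varepsilon)\;\geq\; E^S(h)+(\gamma_s-\gamma_f)\,\varphi((0,1))\,\big|\{h=0\}\big|\;>\;E^S(h),
\end{align*}
so the limsup inequality fails whenever $|\{h=0\}|>0$. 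The missing idea is the infinitesimal wetting layer: in the paper's Proposition~\ref{Proposition Gamma-limsup} the definition of $h_\varepsilon$ adds one extra monolayer of film atoms (the $+\varepsilon$, resp. $+\tfrac{3\varepsilon}{2}$, shift) precisely when $\gamma_s\geq\gamma_f$, which realizes the factor $\gamma_s\wedge\gamma_f$ on $\{x_2=0\}$ at the cost of $O(\varepsilon)$ extra volume, absorbed by the volume correction.

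A secondary soft spot is that you build the recovery sequence directly from a Lipschitz profile and assert that ``a careful counting of boundary bonds yields $E^S_\varepsilon(h_\varepsilon)\to E^S(h)$''. For a merely Lipschitz $h$ the column-by-column missing-bond count is a crystalline function of the rounded discrete slopes, and the rounding errors are of order one per column, so their total does not obviously vanish; the cancellation one needs is a telescoping that holds when the slope is constant at scale larger than $\varepsilon$. This is exactly why the paper inserts an additional reduction from Lipschitz to piecewise affine profiles before the explicit construction (constant slope on each piece makes the count exact up to $O(\varepsilon)$ errors at the finitely many vertices, and also lets the volume adjustment be performed on an interval where $h'$ is constant). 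Either reproduce that reduction, or supply the convexity/telescoping argument you are implicitly invoking; as stated, this step is not justified.
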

\begin{proof} The proof follows  from the definition of $\Gamma$-convergence (see Section \ref{setting}),  Proposition \ref{Proposition Gamma-liminf} and Proposition \ref{Proposition Gamma-limsup}.

\end{proof}

\subsection{Lower Bound}

In this chapter we proof the $\Gamma$-$\liminf$-inequality,  i.e., for all $\{x_\varepsilon\}_\varepsilon \subset X$ converging to $x\in X$ with respect to $d$ there holds
\begin{align*}
\liminf_{\varepsilon \to 0} E_\varepsilon(x_\varepsilon) \geq E(x).
\end{align*}

 The proof of the liminf-inequality decouples into proving both the liminf-inequality for the surface part as well as the elastic part of the energy. 
 
 \begin{proposition}[$\Gamma$-$\liminf$-inequality]\label{Proposition Gamma-liminf} Let $(y,u,h)\in X$. We have 
\begin{align*}
E'(y,u,h) \geq E(y,u,h).
\end{align*}
\end{proposition}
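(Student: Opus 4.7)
The plan is to extract a subsequence realising the $\liminf$, which we may assume finite, and to split $E_\varepsilon=E^S_\varepsilon+E^{el}_\varepsilon$, proving each inequality separately. Proposition~\ref{Compactness Proposition} already gives rotations $R_\varepsilon\to R$ and translations $b_\varepsilon\to b$ such that the limit deformation is $y=Rx+b$, and $\tilde u_\varepsilon:=(y_\varepsilon-(R_\varepsilon x+b_\varepsilon))/\sqrt\varepsilon\rightharpoonup u$ weakly in $H^1_{\mathrm{loc}}(\Omega_h)$; by uniqueness of the $L^2_{\mathrm{loc}}$-limit, this $u$ coincides with the one given by the topology $\tau_X$.

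For the surface term, rewrite the missing-bond sum as an anisotropic discrete perimeter of the set $\Omega_{h_\varepsilon}$. Counting bonds in the hexagonal neighbourhood reproduces the crystalline surface tension $\varphi$ in \eqref{Definitionphi} in the limit. The factor $2\gamma_f$ on $\partial\Omega_h\cap\Omega_h^{1}$ reflects that at the discrete level a vertical cut in the film carries two parallel rows of surface atoms, each contributing its own missing-bond count; the coefficient $\gamma_s\wedge\gamma_f$ on $\partial\Omega_h\cap\Omega_h^{1/2}\cap\{x_2=0\}$ encodes the wetting--dewetting dichotomy, as one may always insert or omit a single monolayer of film atoms over exposed substrate without altering the $L^1$-limit of the profiles, so the effective density is the smaller of $\gamma_s$ and $\gamma_f$. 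With these identifications, the liminf inequality for $E^S_\varepsilon$ reduces to the lower semicontinuity result for anisotropic surface energies on sets in \cite{KP}, applied to the Hausdorff-convergent sequence $\Omega_{h_\varepsilon}\to\Omega_h$ furnished by Proposition~\ref{Compactness Proposition}.

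For the elastic term, fix a smooth open $\Omega'\subset\subset\Omega_h$; for $\varepsilon$ small, $\Omega'\subset\Omega_{h_\varepsilon}$. The cell-energy bound of Proposition~\ref{PropositionBound} together with Theorem~\ref{TheoremRigidity} gives $\varepsilon^{-1}\mathrm{dist}^2(\nabla y_\varepsilon,SO(2))\in L^1(\Omega')$ uniformly in $\varepsilon$; a Friesecke--James--M\"uller truncation, as in \cite{schmidt2009derivation}, replaces $\tilde u_\varepsilon$ by $\hat u_\varepsilon$ with the same weak $H^1$-limit and with $|\nabla\hat u_\varepsilon|^2$ equi-integrable on $\Omega'$. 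Writing $\nabla y_\varepsilon=R_\varepsilon+\sqrt\varepsilon\,\nabla\hat u_\varepsilon$ and Taylor expanding $V^\varepsilon_{i,j}$ about its minimum, one finds, for any nearest-neighbour direction $\hat\eta^{(k)}$ in a cell $T\subset\Omega_h^+$,
\[
V^\varepsilon_{i_k,i_j}(|y_\varepsilon(i_k)-y_\varepsilon(i_j)|/\varepsilon)=\tfrac{K_f}{2}\varepsilon\bigl(\hat\eta^{(k)}\cdot S_\varepsilon\hat\eta^{(k)}-\tfrac{\delta_\varepsilon}{\sqrt\varepsilon}\bigr)^2+o(\varepsilon),
\]
with $S_\varepsilon=\mathrm{sym}(R_\varepsilon^{T}\nabla\hat u_\varepsilon)$, and an analogous identity (with $\delta_\varepsilon$ replaced by $0$) in $\Omega^-$. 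Summing over the three edge directions of the equilateral triangle and using the standard identity $\sum_{k=1}^3(\hat\eta^{(k)}\!\cdot A\hat\eta^{(k)})^{2}\propto 2|A|^{2}+\mathrm{tr}^{2}A$ valid for symmetric $A$, the Riemann sum on triangles of area $\sqrt3\varepsilon^2/4$ converges to the integral of the isotropic density $W_y(x_2,Eu)$ in \eqref{elastic_continuum_energy}; the replacement of $\mathrm{sym}(R^T\nabla u)$ by $Eu$ is legitimate by frame-indifference of the resulting quadratic form. Weak $L^2$-lower semicontinuity of convex quadratic integrals together with equi-integrability then yields $\liminf E^{el}_\varepsilon\ge\int_{\Omega'}W_y(x_2,Eu)\,dx$, and letting $\Omega'\nearrow\Omega_h$ along the Yosida exhaustion used in the compactness proof, monotone convergence completes the elastic bound.

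The main obstacles are three: (i) establishing equi-integrability of the cell strains, which relies on the truncation argument of \cite{friesecke2002theorem}; (ii) ensuring that a single limit rotation $R$ serves every $\Omega'$ in the invading exhaustion, which is already built into the statement of Proposition~\ref{Compactness Proposition}; and (iii) justifying the wetting coefficient $\gamma_s\wedge\gamma_f$, for which one must verify that monolayer insertions over exposed substrate neither change the $L^1$-limit of the profiles nor obstruct the discrete bond counting.
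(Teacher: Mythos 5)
Your overall strategy coincides with the paper's: split $E_\varepsilon$ into surface and elastic parts, use the compactness statement to fix one rotation $R_\varepsilon\to R$ and pass to the rescaled displacements, prove the elastic bound by localization, truncation, Taylor expansion and weak $L^2$ lower semicontinuity of the limiting convex quadratic form, and prove the surface bound via the lower-semicontinuity theorem of \cite{KP}. The elastic half is essentially sound: your replacement-truncation in the spirit of \cite{friesecke2002theorem} and \cite{schmidt2009derivation} is a legitimate variant of the paper's lighter device (the indicator $\chi_{\{|\nabla u_\varepsilon|\le k_\varepsilon\}}$ with $k_\varepsilon\to\infty$, $\sqrt{\varepsilon}\,k_\varepsilon\to 0$, together with $\chi_\varepsilon\to 1$ in measure), although if you modify $u_\varepsilon$ you must also verify that the modification does not increase the $\liminf$ of the discrete energies and keeps the same weak limit; and the step ``replace $\mathrm{sym}(R^T\nabla u)$ by $Eu$ by frame indifference'' deserves more than one clause, since frame indifference only converts the Hessian at $R_\varepsilon$ into the quadratic form evaluated at $\mathrm{sym}(R^T(\nabla u-\delta R))$, and matching this with the density $W_y(x,Eu)$ is a separate (if routine) identification.

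The genuine gap is in the surface part. Theorem \ref{SurfaceSemicontinuity} requires a surface tension $\varphi(x,\nu)$ that is \emph{continuous} in $x$ on $\overline{Q_{R'}}\times\mathbb{R}^2$, while the density you want in the limit jumps across $\{x_2=0\}$ from $\gamma_f\varphi(\nu)$ to $(\gamma_s\wedge\gamma_f)\varphi(\nu)$; hence the inequality does not ``reduce to'' \cite{KP} as you claim. The paper bridges this by minorizing the discrete energy with a functional whose density $\varphi_\eta(x,\nu)$ interpolates linearly in the strip $\eta<x_2<2\eta$ between $(\gamma_s\wedge\gamma_f)\varphi$ and $\gamma_f\varphi$, applying the semicontinuity theorem for each fixed $\eta$, and then sending $\eta\to0$ by monotone convergence. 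Moreover, to invoke a semicontinuity result for functionals on sets one must first produce actual sets with connected, rectifiable boundary out of the discrete configurations: the paper takes $\Omega_\varepsilon$ to be the union of the Voronoi cells of the occupied lattice points, checks $Q\setminus\Omega_\varepsilon\to Q\setminus\Omega_h$ in the Hausdorff distance, and proves the pointwise (in $\varepsilon$) inequality $E^S_\varepsilon(h_\varepsilon)\ge E_\eta(\Omega_\varepsilon)$; asserting that the missing-bond count ``is'' an anisotropic discrete perimeter of $\Omega_{h_\varepsilon}$ skips exactly this construction. Finally, your justification of the coefficient $\gamma_s\wedge\gamma_f$ by inserting or omitting a monolayer of film atoms is the mechanism relevant to the \emph{upper} bound (recovery sequence); for the $\liminf$ the correct and simpler observation is that every missing bond at height close to $0$ carries weight at least $\gamma_s\wedge\gamma_f$, which is precisely what the $\varphi_\eta$-minorization encodes.
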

\begin{proof}
The Proof follows from Proposition \ref{Proposition Gamma-liminf Surface} and Proposition \ref{Proposition Gamma-liminf Elastic} and  by  applying the super-additivity of the $\Gamma$-liminf (see  \cite[Proposition 6.17]{dal2012introduction}). We have
\begin{align*}
&\Gamma\text{-}\liminf_{\varepsilon \to 0 } E^{el}_\varepsilon(y,u,h) \geq  E^{el}(y,u,h) \\&\Gamma\text{-} \liminf_{\varepsilon \to 0 } E^{S}_\varepsilon(h) \geq  E^{S}(h).
\end{align*}
Now since $E_\varepsilon(y,u,h) = E_\varepsilon^{el}(y,u,h) +E_\varepsilon^S(y,u,h)$ we have
\begin{align*}
E'(y,u,h) \geq \Gamma\text{-}\liminf_{\varepsilon \to 0 } E^{el}_\varepsilon(y,u,h) +\Gamma\text{-} \liminf_{\varepsilon \to 0 } E^{S}_\varepsilon(h)  \geq E^{el}(y,u,h) + E^S(h) =E(y,u,h)
\end{align*}
and the claim follows.
\end{proof}

\medskip

 For the surface part we need a semi-continuity result for a class of functionals $F$  defined on the family of sets 
\begin{equation}\label{setfunctional}
\mathcal{A}_c:=\{A\subset\bar{Q} :\,\,\text{$\partial A$ is 
$\mathcal{H}^1$-rectifiable, connected, and $\mathcal{H}^1(\partial A)<+\infty$}\}.
\end{equation}
More precisely, we consider $F: \mathcal{A}_c(\mathbb{R}^2) \to [0,+\infty]$   defined by 
 \begin{align*}
F(\Omega) := \int_{\partial \Omega \cap \Omega^{1/2}}\psi(x,\nu)\mathrm{d}\mathcal{H}^1 + 2\int_{\partial \Omega \cap \Omega^{1}}\psi(x,\nu)\mathrm{d}\mathcal{H}^1
\end{align*}
for every $\Omega\in \mathcal{A}_c$, where $\psi: \mathbb{R}^2\times \mathbb{R}^2\to  [0,+\infty)$ is a continuous surface tensions. Such a result, which we recall here for reader's convenience, is obtained in  \cite{KP} as a corollary from a more general setting which includes not only thin films, but also other stress-driven rearrangement instabilities. 
Notice that in \eqref{setfunctional}  $\nu = \tau^\perp$, which exists and it is well-defined for $\mathcal{H}^1$-a.e. $x \in \partial \Omega$ whenever $\partial \Omega$ is connected and $\mathcal{H}^1(\partial \Omega) <+\infty$. 

\medskip

\begin{theorem}\label{SurfaceSemicontinuity} Let  $R'>0$ and $Q_{R'}:=Q\cap \{x_2<R'\}$. If $\varphi\in\mathcal{C}(\overline{Q_{R'}} \times \mathbb{R}^2; [0,+\infty))$  is convex, even, positively $1$-homogeneous in the second argument, and there exists $C>0$ such that
\begin{align*}
\frac{1}{C}|\nu|\leq \varphi(x,\nu)\leq C|\nu|,
\end{align*} 
for every $\nu \in \mathbb{R}^2$, then  $F$ is lower-semicontinuous with respect to the Hausdorff convergence  of the complements of sets, i.e.,\begin{align*}
\liminf_{n \to +\infty} F(\Omega_n)\geq F(\Omega)
\end{align*}
whenever $\overline{Q_{R'}}\setminus \Omega_n \to \overline{Q_{R'}}\setminus \Omega$ with respect to the Hausdorff-distance.
\end{theorem}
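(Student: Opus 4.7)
My approach reduces the semicontinuity, via a blow-up/localization argument, to two local lower bounds on the $1$-dimensional density of the limiting measure at $\mathcal{H}^1$-a.e.\ point of $\partial\Omega$: the ordinary factor-$1$ bound at points of density $1/2$, and an enhanced factor-$2$ bound at points of density $1$ (cracks). The two bounds are then proved by a Golab-type argument exploiting the connectedness built into $\mathcal{A}_c$.

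I may assume $\liminf_{n} F(\Omega_n)<+\infty$. Using $\psi\leq C|\nu|$, the sequence of Radon measures
\[
\mu_n:=\psi(\cdot,\nu_n)\,\mathcal{H}^1\!\lfloor(\partial\Omega_n\cap\Omega_n^{1/2})+2\psi(\cdot,\nu_n)\,\mathcal{H}^1\!\lfloor(\partial\Omega_n\cap\Omega_n^{1})
\]
is uniformly bounded on $\overline{Q_{R'}}$, hence along a subsequence $\mu_n\rightharpoonup^*\mu$. Defining $\mu_\Omega$ analogously, it suffices to show $\mu\geq\mu_\Omega$, and by the Besicovitch differentiation theorem together with the continuity and positive $1$-homogeneity of $\psi$, this in turn follows from establishing, for $\mathcal{H}^1$-a.e.\ $x_0\in\partial\Omega$ where $\nu(x_0)$ exists,
\[
\liminf_{r\to 0}\frac{\mu(B_r(x_0))}{2r}\ \geq\
\begin{cases}\psi(x_0,\nu(x_0)),& x_0\in\Omega^{1/2},\\[2pt] 2\,\psi(x_0,\nu(x_0)),& x_0\in\Omega^{1}.\end{cases}
\]

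For the factor-$1$ case, I rescale around $x_0$: as $r\to 0$, $\partial\Omega\cap B_r(x_0)$ approaches the chord of $B_r(x_0)$ orthogonal to $\nu(x_0)$. Hausdorff convergence of complements forces $\partial\Omega_n$ to contain a connected subcontinuum crossing $B_r(x_0)$ transversally to that chord, and Golab's semicontinuity theorem applied to the connected closed sets $\partial\Omega_n\cap\overline{B_r(x_0)}$, combined with the uniform continuity and evenness of $\psi$, produces the bound with factor $1$. In the factor-$2$ case, $\partial\Omega$ is locally a rectifiable slit through $x_0$ embedded in a region of full density, so $Q_{R'}\setminus\Omega$ near $x_0$ is essentially the slit itself. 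Hausdorff convergence then compels $\overline{Q_{R'}}\setminus\Omega_n$ to approximate the slit from both of its sides, whence $\partial\Omega_n$ must contain two arcs, one on each side, each of approximate length $2r$, and a Golab-type estimate applied on each arc delivers the factor $2$.

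The main obstacle is making the two-sided approximation in the density-$1$ case rigorous, since Hausdorff convergence alone only controls closeness, not topology. The rescue is that $\Omega_n^{\,c}$ has positive Lebesgue measure and its topological boundary is connected: being Hausdorff-close to the $1$-dimensional slit, $\Omega_n^{\,c}$ must lie in a thin strip around the slit whose boundary necessarily wraps it on both sides. This can be made quantitative through a slicing argument along lines parallel to $\nu(x_0)$: for generic transversal slices, any connected subset of $\partial\Omega_n\cap B_r(x_0)$ that separates the two sides of the slit must cross the slice at least twice. Once the two local densities are in place, a supremum over finite disjoint covers together with the continuity of $\psi$ upgrades them to the global inequality $\mu\geq\mu_\Omega$, yielding the claimed lower semicontinuity.
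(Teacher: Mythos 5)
First, a point of reference: the paper does not actually prove Theorem \ref{SurfaceSemicontinuity}; it is recalled verbatim from \cite{KP}, where it is obtained as a corollary of a more general lower-semicontinuity result for SDRI-type surface energies. So there is no internal proof to compare against, and your blow-up/Besicovitch strategy with local density estimates is a reasonable and standard route in spirit. Nevertheless, as written your argument has a genuine gap precisely at the crux of the statement, the factor $2$ at points of $\Omega^{1}$ (the cracks).

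Your claim that Hausdorff convergence of the complements ``compels $\overline{Q_{R'}}\setminus\Omega_n$ to approximate the slit from both of its sides, whence $\partial\Omega_n$ must contain two arcs, one on each side'' is false. Take $\Omega=B\setminus S$ with $S$ a segment (a crack) and $\Omega_n=B\setminus S_n$ with $S_n$ the same segment translated by $1/n$ to one side: then $\overline{Q_{R'}}\setminus\Omega_n\to\overline{Q_{R'}}\setminus\Omega$ in the Hausdorff distance, but the approximation is entirely one-sided and near the crack $\partial\Omega_n$ consists of a \emph{single} arc. The correct lower bound in this scenario does not come from two arcs or from double crossings of transversal slices (your slicing claim ``must cross the slice at least twice'' fails here: a transversal line meets $S_n$ exactly once); it comes from the fact that these crossing points lie in $\Omega_n^{1}$ and are therefore already weighted by the factor $2$ in $\mu_n$. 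What is needed, and what your outline does not supply, is the dichotomy: either the crossings occur at density-$1$ points of $\Omega_n$ (weight $2$ from the energy itself), or they occur at density-$1/2$ points, in which case the complement is locally two-dimensional near the slit and one must show that the essential boundary produces \emph{two} crossings per generic slice; mixtures of the two cases must be handled as well. Without this case analysis the factor $2$ is not established. A secondary issue: you invoke Golab's theorem for the sets $\partial\Omega_n\cap\overline{B_r(x_0)}$, but these intersections are in general not connected (connectedness of $\partial\Omega_n$ is not inherited by its trace on a ball), so Golab applies only after passing to suitable connected components meeting both $B_{r/2}$ and $\partial B_r$; moreover, for the anisotropic integrand $\int\psi(x,\nu)\,\mathrm{d}\mathcal{H}^1$ one needs a generalized (anisotropic) Golab-type statement or a projection/convexity argument of the type $\int\psi(x_0,\nu)\,\mathrm{d}\mathcal{H}^1\geq\psi\bigl(x_0,\int\nu\,\mathrm{d}\mathcal{H}^1\bigr)$, neither of which is spelled out. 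These are the points you would have to repair before the proof can be considered complete.
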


We are now ready to prove the $\Gamma$-$\liminf$-inequality for the surface energy.

\begin{proposition}[$\Gamma$-$\liminf$-surface-inequality] We have \label{Proposition Gamma-liminf Surface}
\begin{align*}
\Gamma\text{-}\liminf_{\varepsilon \to 0} E^S_\varepsilon(h) \geq E^S(h).
\end{align*}
\end{proposition}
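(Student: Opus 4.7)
The strategy is to reduce the discrete surface energy to an anisotropic perimeter of a continuum set, and then apply the lower-semicontinuity result of Theorem \ref{SurfaceSemicontinuity}.

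Let $(h_\varepsilon)$ be a sequence realizing $\liminf_\varepsilon E^S_\varepsilon(h_\varepsilon)$, which we may assume finite, and by Proposition \ref{Compactness Proposition} satisfying $Q\setminus\Omega_{h_\varepsilon} \to Q\setminus\Omega_h$ in Hausdorff distance. Associate to $h_\varepsilon$ the hexagonalization $A_\varepsilon := \bigcup_{i \in \mathcal{L}_\varepsilon(\Omega_{h_\varepsilon})} H_\varepsilon(i)$, where $H_\varepsilon(i)$ is the Voronoi hexagon of the lattice site $i$. Since each $H_\varepsilon(i)$ is contained in a ball of radius $O(\varepsilon)$ centered at $i$, $Q\setminus A_\varepsilon$ also converges to $Q\setminus\Omega_h$ in the Hausdorff sense. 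The key identification, obtained by counting missing bonds partitioned by the three lattice-bond directions and matching their contributions with the anisotropic tension $\varphi$ of \eqref{Definitionphi}, is the lower bound
\begin{equation*}
E^S_\varepsilon(h_\varepsilon) \geq \int_{\partial A_\varepsilon} \gamma_\varepsilon(x)\,\varphi(\nu)\,d\mathcal{H}^1 + o(1),
\end{equation*}
where $\gamma_\varepsilon(x)\in\{\gamma_f,\gamma_s\}$ records whether the hexagon adjacent to $x$ belongs to a film or a substrate atom.

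To capture the different prefactors appearing in $E^S(h)$, localize to the open half-planes $U_\eta^+ := Q\cap\{x_2 > \eta\}$ and $U_\eta^- := Q\cap\{x_2 < \eta\}$ for small $\eta > 0$. Since substrate atoms lie in $\{x_2\leq 0\}$, we have $\gamma_\varepsilon\equiv\gamma_f$ on $\partial A_\varepsilon\cap U_\eta^+$, while on $\partial A_\varepsilon\cap U_\eta^-$ we only have $\gamma_\varepsilon\geq\gamma_s\wedge\gamma_f$. Both continuous tensions $\gamma_f\varphi$ and $(\gamma_s\wedge\gamma_f)\varphi$ satisfy the hypotheses of Theorem \ref{SurfaceSemicontinuity} (convexity, evenness, $1$-homogeneity, and comparability to $|\nu|$), and its application to the two localized perimeters yields
\begin{equation*}
\liminf_{\varepsilon\to 0}\int_{\partial A_\varepsilon\cap U_\eta^+}\gamma_\varepsilon\varphi\,d\mathcal{H}^1 \geq \gamma_f\int_{\partial\Omega_h\cap\Omega_h^{1/2}\cap U_\eta^+}\varphi\,d\mathcal{H}^1 + 2\gamma_f\int_{\partial\Omega_h\cap\Omega_h^1\cap U_\eta^+}\varphi\,d\mathcal{H}^1,
\end{equation*}
together with an analogous estimate on $U_\eta^-$ with prefactor $\gamma_s\wedge\gamma_f$. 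Summing the two disjoint contributions and letting $\eta\to 0^+$ by monotone convergence then produces $\liminf_\varepsilon E^S_\varepsilon(h_\varepsilon)\geq E^S(h)$.

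The main obstacle is the correct treatment of the wetting/dewetting mechanism at $\{x_2=0\}$: although the discrete tension of an exposed substrate atom is rigidly $\gamma_s$, the effective continuum cost there must collapse to $\gamma_s\wedge\gamma_f$, since a discrete wetting layer of film atoms of vanishing thickness may be inserted without altering the Hausdorff limit of $(\Omega_{h_\varepsilon})$. This forces the use of the weaker bound $\gamma_s\wedge\gamma_f$ on the whole strip $U_\eta^-$ rather than any attempt to separate film and substrate contributions. A secondary subtlety is the factor of two on density-one portions of $\partial\Omega_h$, which reflects infinitesimal-width discrete cracks counted from both sides and is supplied automatically by Theorem \ref{SurfaceSemicontinuity}.
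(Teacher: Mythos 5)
Your plan follows the paper's own route almost step for step: you replace the discrete configuration by the union of Voronoi hexagons of the occupied sites, observe that the complements still converge in the Hausdorff sense, bound $E^S_\varepsilon$ from below by an anisotropic boundary energy of this set with density built from $\varphi$, invoke Theorem \ref{SurfaceSemicontinuity}, and recover the discontinuous weight ($\gamma_f$ above the substrate line, $\gamma_s\wedge\gamma_f$ at it) through an $\eta$-layer argument followed by monotone convergence; the factor $2$ on the density-one portion is indeed supplied by the semicontinuity theorem, exactly as in the paper.

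The one place where you deviate is also the one step that is not justified as written: you apply Theorem \ref{SurfaceSemicontinuity} separately to the two \emph{localized} perimeters on the open half-planes $U^+_\eta=\{x_2>\eta\}$ and $U^-_\eta=\{x_2<\eta\}$. The theorem available in the paper is a global statement for a density $\psi\in C(\overline{Q_{R'}}\times\mathbb{R}^2)$ continuous in $x$; it neither asserts lower semicontinuity of the functional restricted to an open subset, nor covers the discontinuous density $\chi_{\{x_2>\eta\}}\,\gamma_f\varphi(\nu)+\chi_{\{x_2\le \eta\}}\,(\gamma_s\wedge\gamma_f)\varphi(\nu)$ that your sharp splitting amounts to, so boundary mass of $\partial A_\varepsilon$ accumulating near the cut line $\{x_2=\eta\}$ is not controlled by your two displayed liminf estimates. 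The paper avoids precisely this by using a single continuous $x$-dependent density $\varphi_\eta$, equal to $\gamma_f\varphi$ for $x_2\ge 2\eta$, to $(\gamma_s\wedge\gamma_f)\varphi$ for $x_2\le\eta$, and linearly interpolated in the strip in between; one global application of Theorem \ref{SurfaceSemicontinuity} then gives $\liminf_\varepsilon E^S_\varepsilon(h_\varepsilon)\ge E_\eta(\Omega_h)$, and letting $\eta\to0$ by monotone convergence yields $E^S(h)$. Replacing your sharp cutoffs by this transition-layer density repairs the step without changing anything else in your argument. One further remark: your ``key identification'' $E^S_\varepsilon(h_\varepsilon)\ge\int_{\partial A_\varepsilon}\gamma_\varepsilon(x)\varphi(\nu)\,\mathrm{d}\mathcal{H}^1+o(1)$ is asserted rather than proved; since every exposed hexagon edge has length $\varepsilon/\sqrt{3}$ and normal in a bond direction, the comparison of the cost $\varepsilon\gamma$ per missing bond with the edge contribution $(\varepsilon/\sqrt{3})\,\gamma\,\varphi(\nu)$ is exactly the normalization of \eqref{Definitionphi} that has to be checked here (the paper states the corresponding inequality just as tersely, so this is a shared rather than an additional burden, but in a complete write-up it is the point where the specific form of $\varphi$ enters).
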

\begin{proof} Note that if $h_\varepsilon \to h$ we have that $Q\setminus \Omega_{h_\varepsilon} \to Q\setminus \Omega_h$ with respect to the Hausdorff-convergence of sets, $||h||_{L^1(S^1_L)}=V$ and 
\begin{equation}\label{semicontinuity1}
\sup_{\varepsilon >0 } \mathrm{Var}h_\varepsilon <+\infty.
\end{equation}
Furthermore we can assume that
\begin{equation}\label{semicontinuity2}
\sup_{\varepsilon >0} E^S_\varepsilon(h_\varepsilon) < +\infty.
\end{equation}
For $i \in \mathcal{L}_\varepsilon$ denote by $\mathcal{V}_\varepsilon(i)$ the \textit{Voronoi cell} of $i$ in $\mathcal{L}_\varepsilon$ given by
\begin{align*}
\mathcal{V}_\varepsilon(i)=\{x \in \mathbb{R}^2 : |x-i|\leq |x-j| \text{ for all } j \in \mathcal{L}_\varepsilon\}.
\end{align*}
Define $\Omega_\varepsilon \subset \mathbb{R}^2$ by
\begin{align*}
\Omega_\varepsilon = \bigcup_{i \in \mathcal{L}_\varepsilon(\Omega_{h_\varepsilon})} \mathcal{V}_\varepsilon(i).
\end{align*} 

Notice that from \eqref{semicontinuity1} and  \eqref{semicontinuity2} we deduce that $h_\varepsilon$ are uniformly bounded in $BV(0,L)$ and hence,  there exists  $R'>0$ such that  $\Omega_\varepsilon\subset Q_{R'}$, $\Omega\subset Q_{R'}$, and  $\overline{Q_{R'}}\setminus \Omega_{h_\varepsilon} \to \overline{Q_{R'}}\setminus \Omega_h$, where $Q_{R'}:=Q\cap \{x_2<R'\}$.
We also observe that    there holds
\begin{align*}
\mathrm{dist}_{\mathcal{H}}(\mathcal{L}_\varepsilon(\Omega_{h_\varepsilon}),\Omega_{h_\varepsilon})\leq C\varepsilon,\quad \mathrm{dist}_{\mathcal{H}}(\mathcal{L}_\varepsilon(\Omega_{h_\varepsilon}),\Omega_\varepsilon)\leq C\varepsilon
\end{align*}
and therefore $Q\setminus\Omega_\varepsilon \to Q\setminus\Omega_h$ with respect to the Hausdorff-distance. Now fix $\eta>0$ and define $\varphi_\eta : \mathbb{R}^2\times \mathbb{R}^2 \to [0,+\infty] $ by
\begin{align*}
\varphi_\eta((x_1,x_2),\nu) = \begin{cases} \gamma_f \varphi(\nu)& x_2 >2\eta\\
(t\gamma_f + (1-t)\gamma_f \wedge \gamma_s) \varphi(\nu) &x_2= t 2\eta + (1-t)\eta, t \in (0,1)\\
 (\gamma_f \wedge \gamma_s) \varphi(\nu) &\text{otherwise,}
\end{cases}
\end{align*}
with $\varphi$ defined by (\ref{Definitionphi}). There holds
\begin{align*}
E^S_\varepsilon(h_\varepsilon) \geq \int_{\partial\Omega_{\varepsilon}\cap\Omega_{\varepsilon}^{1/2}}\varphi_\eta(x,\nu)\mathrm{d}\mathcal{H}^1 + 2\int_{\partial\Omega_{\varepsilon}\cap \Omega_{\varepsilon}^{1}}\varphi_\eta(x,\nu)\mathrm{d}\mathcal{H}^1 =: E_\eta(\Omega_\varepsilon).
\end{align*}
By Theorem \ref{SurfaceSemicontinuity} we have that $E_\eta$ is lower-semicontinuous and therefore 
\begin{align*}
\liminf_{\varepsilon\to 0} E^S_\varepsilon(h_\varepsilon)  \geq \liminf_{\varepsilon\to 0}E_\eta(\Omega_\varepsilon) \geq E_\eta(\Omega_h).
\end{align*}
Using the monotone convergence theorem we obtain that $E_\eta \to E^S$ increasingly as $\eta \to 0$ and therefore we have
\begin{align*}
\liminf_{\varepsilon\to 0} E^S_\varepsilon(h_\varepsilon)  \geq \sup_{\eta >0} E_\eta(\Omega_h) =E^S(h).
\end{align*}
Hence the claim follows.

\end{proof}

 For the elastic part of the energy we localize first on sets $\Omega'\subset\subset \Omega_h$ in order to have good convergence properties of $u_\varepsilon $ to $u$ (namely weakly in $H^1(\Omega')$ by the previous compactness proposition). We then localize on sets where the gradient of the rescaled displacement is less than a certain threshold $k_\varepsilon$.  The threshold   $k_\varepsilon$ is suitably  chosen so that one can Taylor expand on the set where the gradient is less than $k_\varepsilon$, 
and show that the set invades the whole $\Omega'$. 

\begin{proposition}[$\Gamma$-$\liminf$-elastic-inequality] We have \label{Proposition Gamma-liminf Elastic}
\begin{align*}
\Gamma\text{-}\liminf_{\varepsilon \to 0} E^{el}_\varepsilon(y,u,h) \geq E^{el}(y,u,h).
\end{align*}
\end{proposition}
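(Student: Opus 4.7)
The plan is to fix $\Omega'\subset\subset\Omega_h$ and prove the localized inequality
$$\liminf_{\varepsilon\to 0} E^{el}_\varepsilon(y_\varepsilon,u_\varepsilon,h_\varepsilon)\geq \int_{\Omega'} W_y(x,Eu(x))\,\mathrm{d}x,$$
and then exhaust $\Omega_h$ by such subsets using the non-negativity of $W_y$ and monotone convergence. For an admissible sequence with bounded energy, Proposition \ref{Compactness Proposition} reduces matters to the case $y=Rx+b$ with $R\in SO(2)$, and the rescaled displacements $u_\varepsilon$ (after redefining them via the rigidity rotations $R_\varepsilon$ produced by compactness) converge weakly in $H^1(\Omega')$ to $u$.

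Next I would regroup the elastic energy as a sum of triangle cell energies $\varepsilon W_{\varepsilon,\mathrm{cell}}(\nabla y_\varepsilon(\hat x_T),T)$ as in Section 3, restricting to triangles $T$ with $\hat x_T\in\Omega'$. Writing $\nabla y_\varepsilon=R_\varepsilon+\sqrt{\varepsilon}\nabla u_\varepsilon$ and expanding the square root to second order along each nearest-neighbour direction $\xi=(i_k-i_j)/\varepsilon$ gives
$$\frac{|y_\varepsilon(i_k)-y_\varepsilon(i_j)|}{\varepsilon}-1 =\sqrt{\varepsilon}\,(R_\varepsilon\xi)\cdot(\nabla u_\varepsilon\xi)+O(\varepsilon|\nabla u_\varepsilon|^2).$$
Subtracting $\lambda_\varepsilon=1+\delta_\varepsilon$ on film bonds (and $1$ on substrate bonds), squaring, multiplying by $\varepsilon^{-1}$ coming from the overall $\varepsilon$-rescaling, and using $\delta_\varepsilon/\sqrt{\varepsilon}\to\delta$, the result is a Riemann sum over triangles of a convex quadratic form in $\nabla u_\varepsilon$. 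Summing the three edge contributions inside each cell and exploiting frame indifference (the quadratic form must annihilate infinitesimal rotations) isolates the symmetric part $Eu_\varepsilon$ and yields precisely the isotropic density $W_y(x,Eu)$ with Lam\'e coefficients obtained from $K_f,K_s$ and the explicit lattice geometry.

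To justify the Taylor expansion globally I would introduce a truncation set $G_\varepsilon:=\{x\in\Omega':|\nabla u_\varepsilon(x)|\leq k_\varepsilon\}$ with a threshold satisfying $k_\varepsilon\to\infty$ and $k_\varepsilon\sqrt{\varepsilon}\to 0$. Chebyshev combined with the $H^1$-bound of Proposition \ref{Compactness Proposition} gives $|\Omega'\setminus G_\varepsilon|\leq C/k_\varepsilon^2\to 0$, so that the restriction of $\nabla u_\varepsilon$ to $G_\varepsilon$ is equi-integrable and still converges weakly to $\nabla u$ in $L^2(\Omega')$. Discarding the non-negative contribution of cells meeting $\Omega'\setminus G_\varepsilon$ and applying the cell-wise expansion on $G_\varepsilon$, one obtains
$$E^{el}_\varepsilon(y_\varepsilon,u_\varepsilon,h_\varepsilon)\geq \int_{G_\varepsilon} W_y(x,Eu_\varepsilon(x))\,\mathrm{d}x+o(1),$$
after which standard weak lower semicontinuity of convex integral functionals closes the localized estimate.

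The main obstacle I foresee is the careful bookkeeping of cells straddling the film-substrate interface $\{x_2=0\}$, where the potential $V^\varepsilon_{i,j}$ switches type and the mismatch term appears only on the film side: such cells form a strip of thickness $O(\varepsilon)$ whose total energy contribution vanishes, but one must argue this cleanly so as to reproduce the piecewise definition of $W_y$ across $x_2=0$. A secondary subtlety is the exhaustion step, where letting $\Omega'\uparrow\Omega_h$ must recover the full integral; since $\Omega_h$ is merely the subgraph of a lower semicontinuous profile and $W_y(\cdot,Eu)\in L^1(\Omega_h)$ by the rigidity-based compactness bound, monotone convergence applied to an increasing Lipschitz exhaustion (as built in the proof of Proposition \ref{Compactness Proposition}) suffices.
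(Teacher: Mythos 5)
Your proposal is correct and follows essentially the same route as the paper's proof: localization to $\Omega'\subset\subset\Omega_h$ via the rigidity-based compactness, regrouping into triangular cell energies, truncation at a threshold $k_\varepsilon$ with $k_\varepsilon\to\infty$ and $\sqrt{\varepsilon}\,k_\varepsilon\to 0$, Taylor expansion around the (mismatch-adjusted) rotation, weak $L^2$ convergence of the truncated strains combined with convexity, and finally exhaustion of $\Omega_h$. The only cosmetic differences are that the paper Taylor-expands the cell energy $W_{\lambda_\varepsilon}$ as a matrix function rather than the individual bond lengths, and handles the film--substrate interface by discarding a fixed strip $\{0<x_2<\eta\}$ and letting $\eta\to 0$ at the end instead of tracking the $O(\varepsilon)$-layer of straddling cells.
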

\begin{proof} Let $\{y_\varepsilon,u_\varepsilon, h_\varepsilon\} \subset X$ converge to $(y,u,h)$. Without loss of generality we can assume that 
\begin{align*}
\sup_{\varepsilon >0} E_{\varepsilon}(y_\varepsilon, u_\varepsilon,h_\varepsilon) <+\infty.
\end{align*} 
Furthermore we assume that
\begin{align*}
\lim_{\varepsilon\to 0} E^{el}_{\varepsilon}(y_\varepsilon,u_\varepsilon, h_\varepsilon) = \liminf_{\varepsilon\to 0}E^{el}_{\varepsilon}(y_\varepsilon,u_\varepsilon,h_\varepsilon).
\end{align*}
By Proposition \ref{Compactness Proposition} we have that there exist $\{R_\varepsilon\}_\varepsilon \subset SO(2),\{ b_\varepsilon\}_\varepsilon \subset \mathbb{R}^2$ such that the functions $u_\varepsilon : \mathcal{L}_\varepsilon(\Omega_h)\to \mathbb{R}^2 $ defined by
\begin{align*}
u_\varepsilon(x) = \frac{y_\varepsilon(x) -(R_\varepsilon x+b_\varepsilon)}{\sqrt{\varepsilon}} 
\end{align*}
converge to $u$ in $H^1_{\mathrm{loc}}(\Omega_h)$. Fix $\Omega'\subset\subset \Omega_h$. For $\varepsilon>0$ small enough there holds $\Omega' \subset\subset \Omega_{h_\varepsilon}$ we therefore have 
\begin{align*}
E_{\varepsilon}^{el}(y_\varepsilon,u_\varepsilon,h_\varepsilon) &\geq  \underset{T \cap\Omega'\cap \Omega_h^+ \neq \emptyset}{\sum_{T \in \mathcal{T}_\varepsilon}}\varepsilon W_{\varepsilon,cell}(\nabla y_\varepsilon(\hat{x}_T),T) + \underset{T \cap\Omega'\cap \Omega^- \neq \emptyset}{\sum_{T \in \mathcal{T}_\varepsilon}}\varepsilon W_{\varepsilon,cell}(\nabla y_\varepsilon(\hat{x}_T),T)\\&= I_{\varepsilon}^+ + I_\varepsilon^-.
\end{align*}
Now fix $\eta >0$ and define $\Omega^+_\eta= \left(\Omega' \cap\Omega_{h}^+\cap\{x_2 > \eta\} \right) \subset \subset \Omega_{h_\varepsilon}^+$. For every $T \in \mathcal{T}_\varepsilon$ such that $T \cap \Omega^+_\eta \neq \emptyset$ we have that $W_{\varepsilon,cell}(F,T) = \frac{K_f}{4} W_{\lambda_\varepsilon}(F)$. Now
\begin{align}\label{Iepsminusfirstestimate}
\begin{split}
I_\varepsilon^+ &\geq \varepsilon^{-1} \frac{4}{\sqrt{3}}\underset{T \cap \Omega^+_\eta \neq  \emptyset} {\sum_{T \in \mathcal{T}_\varepsilon}}\int_{T}W_{\varepsilon,cell}(\nabla y_\varepsilon(x),T)\mathrm{d}x = \varepsilon^{-1} \frac{16}{\sqrt{3}}K_f\underset{T \cap \Omega^+_\eta \neq \emptyset}{\sum_{T \in \mathcal{T}_\varepsilon}}\int_{T}W_{1+\delta_\varepsilon}(\nabla y_\varepsilon(x))\mathrm{d}x \\&\geq \varepsilon^{-1}\frac{16}{\sqrt{3}} K_f\int_{\Omega^+_\eta}W_{1+\delta_\varepsilon}(\nabla y_\varepsilon(x))\mathrm{d}x\\&= \varepsilon^{-1}\frac{16}{\sqrt{3}} K_f\int_{\Omega^+_\eta}W_{1+\delta_\varepsilon}\left((1+\delta_\varepsilon)R_\varepsilon + \sqrt{\varepsilon}(\nabla u_\varepsilon - \frac{\delta_\varepsilon}{\sqrt{\varepsilon}}R_\varepsilon)  \right)\mathrm{d}x \\&\geq \varepsilon^{-1}\frac{16}{\sqrt{3}}  K_f\int_{\Omega^+_\eta}\chi_\varepsilon(x) W_{1+\delta_\varepsilon}\left((1+\delta_\varepsilon)R_\varepsilon + \sqrt{\varepsilon}(\nabla u_\varepsilon - \frac{\delta_\varepsilon}{\sqrt{\varepsilon}}R_\varepsilon)  \right)\mathrm{d}x
\end{split}
\end{align}
where we set $\chi_{\varepsilon}(x) = \chi_{\{|\nabla u_\varepsilon|(x) \leq k_\varepsilon\}}(x)$ with $k_\varepsilon >0$ to be chosen later. Now by Taylor expanding $W_{1+\delta_\varepsilon}$ around $(1+\delta_\varepsilon)R_\varepsilon$, using the assumptions on $W_{1+\delta_\varepsilon}$ one can check that $D^2W(F)=  D^2 W_{1+\delta_\varepsilon}((1+\delta_\varepsilon)R_\varepsilon)(F,F)= D^2 W_1(R_\varepsilon)(F,F)$. Therefore we obtain we obtain
\begin{align*}
W_{1+\delta_\varepsilon}\left((1+\delta_\varepsilon)R_\varepsilon + F \right) = \frac{1}{2} D^2 W(F) + w\left(|F|\right),
\end{align*}
where $\sup \{\frac{w(F)}{|F|^2}: |F| \leq \rho\}\to 0$ as $\rho \to 0$ independent of $\varepsilon$. Using (\ref{Iepsminusfirstestimate}) we have that
\begin{align*}
I_\varepsilon^+ \geq\frac{8}{\sqrt{3}}K_f\int_{\Omega_\eta^+}D^2W\left(\chi_\varepsilon(x)(\nabla u_\varepsilon - \frac{\delta_\varepsilon}{\sqrt{\varepsilon}}R_\varepsilon)\right)+\varepsilon^{-1}\chi_\varepsilon(x)w\left(\sqrt{\varepsilon}|\nabla u_\varepsilon-\frac{\delta_\varepsilon}{\sqrt{\varepsilon}}R_\varepsilon |\right) \mathrm{d}x.
\end{align*}
The second term is bounded by
\begin{align*}
|\nabla u_\varepsilon-\frac{\delta_\varepsilon}{\sqrt{\varepsilon}}R_\varepsilon |^2\chi_\varepsilon(x)\frac{ w\left(\sqrt{\varepsilon}|\nabla u_\varepsilon-\frac{\delta_\varepsilon}{\sqrt{\varepsilon}}R_\varepsilon |\right)}{\varepsilon |\nabla u_\varepsilon-\frac{\delta_\varepsilon}{\sqrt{\varepsilon}}R_\varepsilon |^2}.
\end{align*}
If we choose $k_\varepsilon \to +\infty$ such that $k_\varepsilon \sqrt{\varepsilon} \to 0$, then $|\nabla u_\varepsilon \frac{\delta_\varepsilon}{\sqrt{\varepsilon}}R_\varepsilon|$ is bounded in $L^2(\Omega')$ and $\chi_\varepsilon(x)\frac{ w\left(\sqrt{\varepsilon}|\nabla u_\varepsilon-\frac{\delta_\varepsilon}{\sqrt{\varepsilon}}R_\varepsilon |\right)}{\varepsilon |\nabla u_\varepsilon-\frac{\delta_\varepsilon}{\sqrt{\varepsilon}}R_\varepsilon |^2}$ converges to zero uniformly in $\varepsilon$. We therefore deduce that
\begin{align*}
\liminf_{\varepsilon \to 0 }I_\varepsilon^+ \geq \liminf_{\varepsilon \to  0} \frac{8}{\sqrt{3}}K_f\int_{\Omega_\eta^+}D^2W\left(\chi_\varepsilon(x)(\nabla u_\varepsilon - \frac{\delta_\varepsilon}{\sqrt{\varepsilon}}R_\varepsilon)\right)\mathrm{d}x
\end{align*}
Noting that $\frac{\delta_\varepsilon}{\sqrt{\varepsilon}} \to \delta, R_\varepsilon \to  R$ and $\chi_\varepsilon$ converges to $1$ in measure in $\Omega_\eta^+$ we have
\begin{align*}
\chi_{\Omega_\eta^+} \chi_\varepsilon\left( \nabla u_\varepsilon - \frac{\delta_\varepsilon}{\sqrt{\varepsilon}}R_\varepsilon \right) \rightharpoonup \nabla u - \delta  R \text{ in } L^2(\Omega_\eta^+)
\end{align*}
By lower-semicontiuity of convex functionals with respect to weak convergence we obtain
\begin{align*}
\liminf_{\varepsilon \to 0} I_\varepsilon^+ \geq \frac{8}{\sqrt{3}}K_f \int_{ \Omega_\eta^+} D^2W\left(\nabla u -\delta  R\right)\mathrm{d}x.
\end{align*}
Now letting $\eta \to 0$ we obtain
\begin{align}\label{liminfelfilm}
\liminf_{\varepsilon \to 0} I_\varepsilon^+ \geq \frac{8}{\sqrt{3}}K_f \int_{ \Omega_h^+ \cap \Omega'} D^2W\left(\nabla u -\delta  R\right)\mathrm{d}x.
\end{align}
The proof of
\begin{align}\label{liminfelsubstrate}
\liminf_{\varepsilon\to 0} I_\varepsilon^- \geq \frac{8}{\sqrt{3}}K_s \int_{\Omega'\cap \Omega^-} D^2W\left(\nabla u\right)\mathrm{d}x
\end{align}
follows exactly the same steps. Using (\ref{liminfelfilm}) and (\ref{liminfelsubstrate}) we obtain
\begin{align*}
\liminf_{\varepsilon \to 0} E_{\varepsilon}^{el}(y_\varepsilon,u_\varepsilon,h_\varepsilon) &\geq \liminf_{\varepsilon \to 0}I_\varepsilon^+ + \liminf_{\varepsilon \to 0} I_\varepsilon^- \\&\geq \frac{8}{\sqrt{3}}\left(K_f \int_{\Omega'\cap \Omega_h^+} D^2W\left(\nabla u -\delta  R\right)\mathrm{d}x +K_s \int_{\Omega'\cap \Omega^-} D^2W\left(\nabla u\right)\mathrm{d}x \right)\\&=\int_{\Omega'\cap \Omega_h} W_y(x,Eu)\mathrm{d}x
\end{align*}
Letting $\Omega' \to \Omega_h$ we obtain the claim.
\end{proof}

\subsection{Upper Bound}

In this section we prove the $\Gamma$-$\limsup$-inequality,  i.e., for all $x \in X$ there exists a sequence  $\{x_\varepsilon\}_\varepsilon $ converging to $x$ with respect to $d$ such that
\begin{align*}
\limsup_{\varepsilon \to 0} E_\varepsilon(x_\varepsilon)\leq E(x). 
\end{align*}

 The result is based on density results.  
 In a first step we use the Yosida-transform of the profile $h$ in order to obtain a Lipschitz approximation. 
\medskip

 To this aim we begin by observing that $\varphi : \mathbb{R}^2 \to [0,+\infty)$  defined in \ref{Definitionphi} is a convex, even,  and positively homogeneous function of degree one and such that
\begin{align}\label{phinorm}
\frac{1}{C}|\nu|\leq \varphi(\nu)\leq C|\nu|.
\end{align}

Furthermore, let us define $h^- : S^1_L \to [0,+\infty)$  by 
\begin{align*}
h^-(x) := \inf\{\liminf_{n \to +\infty} h(x_n) : x_n \to x, x_n \neq x\}.
\end{align*}
If $\mathrm{Var} h < +\infty$, then $\{x : h(x) < h^-(x)\}$ is at most countable.

\begin{lemma}\label{Prop SurfaceContinuousApprox}  Let $h : S^1_L \to \mathbb{R}_+$ be a lower semicontinuous function such that  $||h||_{L^1(S^1_L)} <+\infty$.  
Then there exist a sequence of Lipschitz functions $h_n :S^1_L \to \mathbb{R}_+$ such that $h_n\leq h_{n+1}\leq h$,  $h_n\to h$ in $L^1(S^1_L)$,  $Q \setminus \Omega_{h_n} \to Q \setminus \Omega_h$ with respect to the Hausdorff-distance and 
\begin{align*}
E^S(h_n) \to E^S(h),
\end{align*}
 where $E^S$ is defined by (\ref{ESenergy}). 
\end{lemma}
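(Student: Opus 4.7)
The plan is to take $h_n$ to be the $n$-Yosida transform of $h$ on the circle $S^1_L$,
$$h_n(x):=\inf_{y\in S^1_L}\bigl\{h(y)+n\,d_{S^1_L}(x,y)\bigr\}.$$
Standard properties give that each $h_n$ is $n$-Lipschitz, that $0\leq h_n\leq h_{n+1}\leq h$ (choose $y=x$ in the infimum; the infimand is nondecreasing in $n$ for $y\neq x$), and, by lower semicontinuity of $h$, $h_n\to h$ pointwise on $S^1_L$. The monotone convergence theorem then yields $h_n\to h$ in $L^1(S^1_L)$. The Hausdorff convergence $Q\setminus\Omega_{h_n}\to Q\setminus\Omega_h$ is obtained exactly as in the proof of Proposition~\ref{Compactness Proposition}: pointwise monotone convergence to an lsc limit is equivalent to $\Gamma$-convergence of the $h_n$ in the usual topology of $\mathbb{R}$, which is equivalent via \cite[Theorem~4.16]{dal2012introduction} to Kuratowski convergence of the epigraphs, hence to Hausdorff convergence of the complements. (We may assume $E^S(h)<+\infty$, so that $h$ is bounded and all sets lie in a common compact subset of $\mathbb{R}^2$.)

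The energy convergence $E^S(h_n)\to E^S(h)$ splits into two halves. The lower bound $\liminf_n E^S(h_n)\geq E^S(h)$ follows from Proposition~\ref{Proposition Gamma-liminf Surface} applied to the sequence $\{h_n\}$, since that proposition relies only on Hausdorff convergence of the complements. The matching upper bound is the heart of the lemma. Because each $h_n$ is Lipschitz and non-negative, $\partial\Omega_{h_n}\cap\Omega_{h_n}^{1}=\emptyset$ and the contact set is preserved exactly: $h_n(x)=0$ forces $h(x)=0$ (by lsc of $h$, since otherwise a minimizing sequence for the infimum would contradict $h(x)>0$), and conversely. Using the graph parametrization together with the $1$-homogeneity of $\varphi$, the energy $E^S(h_n)$ reduces to an integral of $\varphi(-h_n',1)$ over $\{h_n>0\}$ plus a contact term on $\{h=0\}$; the latter passes to the limit at once, so only the former is at stake.

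The key analytic step is the explicit computation of the limit of the graph integral. At each one-sided jump of $h$ of size $j:=|h(x_0^+)-h(x_0^-)|$, the Yosida transform replaces the vertical segment by a single affine ramp of slope $\pm n$ over an interval of length $j/n$. Direct evaluation of \eqref{Definitionphi} gives $\varphi(\pm n,1)/n\to 2=\varphi(\pm e_1)$, so the ramp contributes $\varphi(\pm n,1)\cdot j/n\to 2j$, matching precisely the $\Omega_h^{1/2}$ reduced-boundary contribution with prefactor $\gamma_f$. At a pure spike $x_0$ with $h(x_0)<h^-(x_0)$ and $j:=h^-(x_0)-h(x_0)$, the Yosida transform produces a symmetric $V$-shape with \emph{two} ramps, contributing $2\cdot 2j=4j$, which matches exactly the factor-$2\gamma_f$ cut term $\int_{\partial\Omega_h\cap\Omega_h^{1}}\varphi\,d\mathcal{H}^{1}=2j$ weighted by $2\gamma_f$. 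On intervals of continuity of $h$ the argument is standard, using the Lipschitz structure and dominated convergence.

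The main obstacle is to carry out this bookkeeping for a general lsc $h\in BV(S^1_L)$, whose jump set may be countably infinite and whose derivative may have a diffuse (Cantor) part. Following the spirit of \cite{chambolle2002computing}, one splits $\partial\Omega_h$ into its reduced-boundary, cut, and substrate parts and estimates each contribution to $E^S(h_n)$ separately: the jump/spike part by the ramp computation above summed over the at-most-countable jump set, the absolutely continuous part by $\varphi(-h_n',1)\to\varphi(-h',1)$ on a convergent subsequence of derivatives, and the Cantor part by the $L^\infty$-equivalence $\varphi\sim|\cdot|$ (cf.~\eqref{phinorm}) so that it is dominated by $|D^c h|$ and controlled by dominated convergence as $n\to\infty$. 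Summing the three contributions and comparing with the three terms of $E^S(h)$ yields the upper bound, and together with the lower bound this gives $E^S(h_n)\to E^S(h)$, completing the proof.
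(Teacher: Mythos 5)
Your construction (the Yosida transform), the monotonicity, the $L^1$ and Hausdorff convergences, and the identity $\{h_n=0\}=\{h=0\}$ all coincide with the paper's proof, and your liminf inequality is essentially right in spirit; note only that it cannot literally follow from Proposition \ref{Proposition Gamma-liminf Surface}, which concerns the discrete energies $E^S_\varepsilon$ along lattice profiles, not $E^S$ along continuum profiles. The correct tool is Theorem \ref{SurfaceSemicontinuity} applied to the auxiliary densities $\varphi_\eta$ (which interpolate between $\gamma_f\varphi$ and $(\gamma_f\wedge\gamma_s)\varphi$ near $\{x_2=0\}$), followed by monotone convergence in $\eta$; this is how the paper proceeds and is a routine fix.

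The genuine gap is in the upper bound, which is the heart of the lemma. Your local picture of the Yosida transform near a jump -- ``a single affine ramp of slope $\pm n$ over an interval of length $j/n$'' -- is not correct in general: the infimum defining $h_n$ is global, so a ramp emanating from a jump can be truncated or replaced by competitors coming from other low values of $h$, and nearby jumps or accumulating spikes interact. More seriously, your proposed bookkeeping via the decomposition of $Dh$ into jump, absolutely continuous and Cantor parts does not close: the Cantor part is not an error term to be ``dominated'' and discarded, since it contributes to the limit energy $E^S(h)$ through $\partial\Omega_h\cap\Omega_h^{1/2}$ and must be matched (from above) by the energy of the steep affine pieces of $h_n$ on $\{h_n<h\}$; likewise ``$\varphi(-h_n',1)\to\varphi(-h',1)$ on a convergent subsequence of derivatives'' is asserted without any mechanism forcing a.e.\ convergence of $h_n'$. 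The paper avoids all of this pointwise bookkeeping: it writes the open set $\{h_n<h\}$ as a disjoint union of intervals $(a_k,b_k)$, proves the exact representation \eqref{hnrepresentation} of $h_n$ there (one or two affine pieces of slope $\pm n$), and then compares each such segment with the curve $\gamma_k\subset\partial\Omega_h$ joining the same endpoints, using convexity and positive $1$-homogeneity of $\varphi$ so that the $\varphi$-length of a chord is at most that of any connecting curve; finally it checks that any vertical portion of $\partial\Omega_h$ used by two adjacent intervals lies in $\partial\Omega_h\setminus\partial\overline\Omega_h$, i.e.\ in the cut set carrying the factor $2\gamma_f$ in \eqref{ESenergy}. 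This yields the clean inequality $E^S(h_n)\le E^S(h)$ for every $n$, with no decomposition of $Dh$ and no asymptotic ramp computation; your factor-$2$ spike heuristic is consistent with this, but as written your argument does not prove the limsup inequality for a general lower semicontinuous $h$ of bounded variation.
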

\begin{proof} Define $h_n : S^1_L \to \mathbb{R}_+$ as the Yosida-transform of $h$ given by
\begin{align*}
h_n(x) = \inf \left\{h(y) + n|x-y | \right\}.
\end{align*}
We then have that $h_n\leq h_{n+1}\leq h$ and $h_n \to h$ pointwise  and hence, $h_n\to h$ in $L^1(S^1_L)$ and in the sense of $\Gamma$-convergence  (cf. \cite{dal2012introduction}).  Let us assume without loss of generality that $E^S(h) < +\infty$. By (\ref{phinorm}) we have that $\mathrm{Var} h  <+\infty$, which in turns together with  $||h||_{L^1(S^1_L)} <+\infty$  yields that  $||h||_\infty < +\infty$.  One can check that $Q\setminus \Omega_{h_n} \to Q\setminus \Omega_h$ with respect to the Hausdorff-distance.
 Now fix $\eta>0$ and define $\varphi_\eta : \mathbb{R}^2\times \mathbb{R}^2 \to [0,+\infty] $ by
\begin{align*}
\varphi_\eta((x_1,x_2),\nu) = \begin{cases} \gamma_f \varphi(\nu)& x_2 >2\eta\\
(t\gamma_f + (1-t)\gamma_f \wedge \gamma_s) \varphi(\nu) &x_2= t 2\eta + (1-t)\eta, t \in (0,1)\\
 (\gamma_f \wedge \gamma_s) \varphi(\nu) &\text{otherwise,}
\end{cases}
\end{align*}
with $\varphi$ defined by (\ref{Definitionphi}). Now define $E^S_\eta : AP([0,L]) \to [0,+\infty]$ by
\begin{align*}
E^S_\eta(h) = \int_{\partial\Omega_h\cap\Omega^{1/2}_h}\varphi_\eta(x,\nu)\mathrm{d}\mathcal{H}^1 + 2\int_{\partial\Omega_h\cap \Omega^{1}_h}\varphi_\eta(x,\nu)\mathrm{d}\mathcal{H}^1.
\end{align*}

 Since $Q \setminus \Omega_{h_n} \to Q \setminus \Omega_h$ with respect to the Hausdorff-distance and Theorem \ref{SurfaceSemicontinuity} we have that
\begin{align*}
\liminf_{n \to +\infty} E^S_\eta(h_n) \geq E^S_\eta(h).
\end{align*}
Now by the Monotone Convergence Theorem there holds 
\begin{align*}
\lim_{\eta \to 0} E^S_\eta(h) = \sup_{\eta >0} E^S_\eta(h) =  E^S(h)
\end{align*}
for all $h \in AP(S^1_L)$. Now
\begin{align*}
\liminf_{n \to +\infty} E^S(h_n) \geq   \liminf_{n \to +\infty} E^S_\eta(h_n) \geq  E^S_\eta(h).
\end{align*}
Letting $\eta \to 0$ we therefore have
\begin{align*}
\liminf_{n \to +\infty} E^S(h_n) \geq  E^S(h). 
\end{align*}

It remains to prove the other inequality. We first prove that $\{h_n =0 \} = \{h=0\}$. Suppose that $h_n(x) =0$, then there exists $x'\in [0,L]$ such that $0=h(x') + n|x-x'| \geq n|x-x'| \geq 0$. Hence $x'=x$ and $h(x)=0$. On the other hand if $h(x)=0$, since $0\leq h_n(x) \leq h(x)=0$ it follows $h_n(x)=0$. And therefore the claim is proven. Moreover this set is essentially $\left(\partial \Omega_h \cap \Omega_h^{\frac{1}{2}}\right) \setminus Q^+$.
Noting that the set $\Omega_h$  is Lipschitz we have that $\partial \Omega_h =\partial \overline{\Omega}_h $ and using $\{h_n =0 \} = \{h=0\}$ we obtain
\begin{align}\label{EnergyEquality}
\begin{split}
E^S(h_n) &= \gamma_f\int_{\partial \Omega_{h_n}\cap Q^+} \varphi(\nu)\mathrm{d}\mathcal{H}^1+  \gamma_f\wedge  \gamma_s\int_{\partial\overline{\Omega}_{h_n}\setminus Q^+} \varphi(\nu)\mathrm{d}\mathcal{H}^1 \\&=  \gamma_f\int_{\partial \Omega_{h_n}\cap Q^+} \varphi(\nu)\mathrm{d}\mathcal{H}^1+  \gamma_f\wedge  \gamma_s\int_{\partial\overline{\Omega}_h\setminus Q^+} \varphi(\nu)\mathrm{d}\mathcal{H}^1.
\end{split}
\end{align}
We now need to estimate the first term. To this end we split $\partial \Omega_{h_n} $ into two parts, $\partial \Omega_{h_n} \cap \partial \Omega_h$ and $\partial \Omega_{h_n} \setminus \partial \Omega_h$. First notice that $\partial \Omega_h \cap \partial \Omega_{h_n}$ is essentially equal to $\partial \overline{\Omega}_h  \cap \partial \Omega_{h_n}$. Indeed let $(x_1,x_2) \in \partial \Omega_{h_n} \cap (\partial\Omega_h \setminus \partial\overline{\Omega}_h)$. We have $x_2 = h_n(x_1) \leq h(x_1)$ and $h(x_1) \leq x_2 < h^-(x_1)$, thus $x_2 = h(x_1)$ and $h(x_1) < h^-(x_1)$, but this happens for at most a countable number of points. Thus $\partial \Omega_{h_n} \cap (\partial\Omega_h \setminus \partial\overline{\Omega}_h) $ is at most countable. Therefore we can write
\begin{align}\label{Energyequality2}
 \gamma_f\int_{\partial \Omega_{h_n}\cap Q^+} \varphi(\nu)\mathrm{d}\mathcal{H}^1 =  \gamma_f\int_{\partial \overline{\Omega}_{h}\cap \Omega_{h_n}\cap Q^+} \varphi(\nu)\mathrm{d}\mathcal{H}^1 +  \gamma_f\int_{\partial \Omega_{h_n} \setminus \partial\Omega_h\cap Q^+} \varphi(\nu)\mathrm{d}\mathcal{H}^1.
\end{align}
It remains to estimate the second term on the right hand side. We have $L_n=\{x_1: (x_1,x_2) \in \partial \Omega_{h_n} \setminus \partial \Omega_h \} = \{x_1 \in [0,L] :  h_n(x_1) < h(x_1)\} $ which is an open set. It can be written as a disjoint union of open intervals:
\begin{align*}
L_n = \bigcup_{k=1}^K(a_k,b_k),
\end{align*}
with $a_k=a_k^n,b_k=b_k^n \in [0,L]$ $ a_k < b_k \leq a_{k+1}$ for all $k \in \{1,\ldots,K\}$ and $K=K(n)\in \mathbb{N}_0\cup \{+\infty\}$. Fix $k \in \mathbb{N}$ and consider $(a_k,b_k)$. We claim that for $x \in (a_k,b_k)$ there holds
\begin{align}\label{hnrepresentation}
h_n(x) = \min\{h(a_k) +n(x-a_k),h(b_k)+n(b_k-x)\}.
\end{align}
Let us prove (\ref{hnrepresentation}). By testing with $a_k,b_k $ respectively we obtain that 
\begin{align*}
h_n(x) \leq \min\{h(a_k) +n(x-a_k),h(b_k)+n(b_k-x)\}.
\end{align*}
Now for the other inequality. Assume there exists $x' < a_k$ such that $h_n(x) = h(x') + n|x-x'| < h(a) + n|x-a|$ but this contradicts $h_n(a) =h(a)$. The same way we can prove that there does not exist $x' >b_k$ such that $h_n(x) = h(x')+n|x-x'|$. It remains to prove the claim for $x \in (a_k,b_k)$. Let us assume that there exist $x' \in (a_k,b_k) $ such that $h_n(x) = h(x') + n|x-x'|$. Then, since $ \mathrm{Lip} h_n \leq n$ we have
\begin{align*}
 h(x') + n|x-x'|= h_n(x) \leq h_n(x') + n|x-x'| \leq h(x') + n|x-x'|
\end{align*}
so that $h_n(x')=h(x')$ which contradicts $x' \in (a_k,b_k)$. There are now two cases to consider:
\begin{itemize}
\item[a)] $h_n(x) = h(a_k) + n|x-a_k| $ (or similar $h_n(x) = h(b_k) + n|x-b_k| $)
\item[b)] There exists $x_0=x_0^k \in (a_k,b_k)$ such that
\begin{align*}
h_n(x) = \begin{cases} h(a_k) + n|a_k-x| &x \in [a_k,x_0),\\
 h(b_k) + n|b_k-x| &x \in [x_0,b_k).
\end{cases}
\end{align*}
\end{itemize}
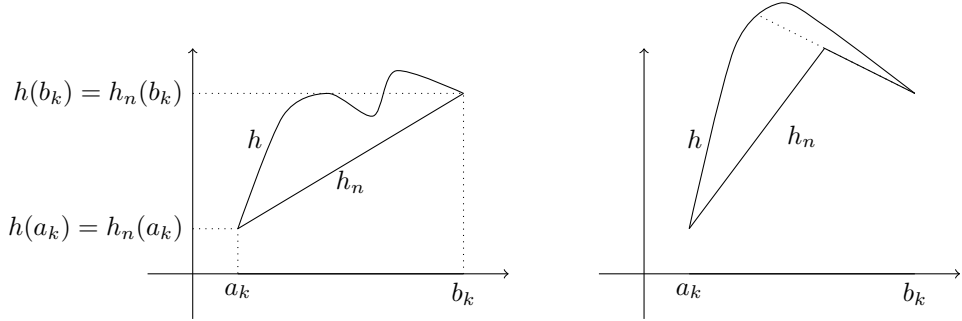
\begin{figure}[htp]
\centering
\begin{tikzpicture}[scale=0.6]
\draw[->](-2,0)--++(8,0);
\draw[->](-1,-1)--++(0,6);
\draw(0,0)--++(5,0) node[anchor =north]{$b_k$};
\draw(0,0) node[anchor =north]{$a_k$};
\draw(0,1)--++(5,3);
\draw(0,1)++(2.5,1.5) node[anchor = north]{$h_n$};
\draw[dotted](-1,1) node[anchor=east]{$h(a_k)=h_n(a_k)$}--++(1,0)--++(0,-1);
\draw[dotted](-1,4) node[anchor=east]{$h(b_k)=h_n(b_k)$}--++(6,0)--++(0,-4);
\draw  plot [smooth] coordinates {(0,1)(1,3.5)(2,4)(3,3.5)(3.5,4.5)(5,4) };
\draw(0.75,3) node[anchor =east]{$h$};

\begin{scope}[shift={(10,0)}]
\draw[->](-2,0)--++(8,0);
\draw[->](-1,-1)--++(0,6);
\draw(0,0)--++(5,0) node[anchor =north]{$b_k$};
\draw(0,0) node[anchor =north]{$a_k$};
\draw(0,1)--(3,5)--(5,4);
\draw[dotted](5,4)--++(-3.5,1.75);
\draw(0.5,3) node[anchor =east]{$h$};
\draw(0,1)++(2.5,2.5) node[anchor = north]{$h_n$};
\draw  plot [smooth] coordinates {(0,1)(1,5)(2,6)(3,5.5)(5,4) };
\end{scope}

\end{tikzpicture}
\caption{The case a) on the left and the case b) on the right}
\end{figure}
In the first case the graph of $h_n$ in $[a_k,b_k] \times \mathbb{R}$ is a straight line $l_k$ from $(a_k,h(a_k))$ to $(b_k,h(b_k))$, while the  set  $\partial \Omega_h \cap ([a_k,b_k] \times \mathbb{R})$ contains a curve $\gamma_k : (0,L_k) \to [a_k,b_k]\times \mathbb{R}$ parametrized by arc-length connecting those two points, that is $ \gamma_k(0) = (a_k,h(a_k)), \gamma_k(L_k) = (b_k,h(b_k)) $ and by \cite{falconer1986geometry}, Lemma 3.12 and Theorem 3.8 the curve has a tangent $\dot{\gamma}_k$ at $\mathcal{H}^1$-a.e. point and it satifies $|\dot{\gamma}_k|=1$ $\mathcal{L}^1$-a.e. in $(0,L_k)$. We set $\xi_k =(a_k,h(a_k))-(b_k,h(b_k))$. Now by convexity and the positively one homogeneity we have
\begin{align}\label{energyinequality1}
\begin{split}
\int_{\partial \Omega_{h_n} \cap [a_k,b_k]\times\mathbb{R}}\varphi(\nu)\mathrm{d}\mathcal{H}^1&=\int_{l_k} \varphi(\nu_{l_k})\mathrm{d}\mathcal{H}^1= |\xi_k| \varphi\left(\frac{\xi_k^\perp}{|\xi_k|}\right)=\varphi(\xi_k^\perp) \\&= \varphi\Big(\left(\int_0^{L_k}\dot{\gamma}_k(t)\mathrm{d}t\right)^\perp\Big) \leq \int_0^{L_k}\varphi(\dot{\gamma}_k^\perp(t))\mathrm{d}t.
\end{split}
\end{align}
In case b) the graph of $h_n$ is made of two straight lines, one going from $(a_k,h(a_k))$ to $(x_0,h_n(x_0))$ and the other one going from $(x_0,h_n(x_0))$ to  $(b,h(b))$. The  set  $\partial \Omega_h \cap ([a_k,b_k] \times \mathbb{R})$ contains a curve $\gamma_k : (0,L_k) \to [a_k,b_k]\times \mathbb{R}$ parametrized by arc-length connecting the two endpoints, that is $ \gamma_k(0) = (a_k,h(a_k)), \gamma_k(L_k) = (b_k,h(b_k)) $ and (again using \cite{falconer1986geometry}, Lemma 3.12 and Theorem 3.8) $|\dot{\gamma}_k|=1$ $\mathcal{L}^1$-a.e. in $(0,L_k)$. We prolongate the line connecting $(a_k,h(a_k))$ and $(x_0,h_n(x_0))$ until it intersects the curve $\gamma$ say at $\gamma(t_0)$. Setting $\xi_k = \gamma(t_0)- (a_k,h(a_k))$ we have
\begin{align}\label{Boundaryhn1}
\begin{split}
|\xi_k| \varphi\left(\frac{\xi_k^\perp}{|\xi_k|}\right)=\varphi(\xi_k^\perp) = \varphi\Big(\left(\int_0^{t_0}\dot{\gamma}_k(t)\mathrm{d}t\right)^\perp\Big) \leq \int_0^{t_0}\varphi(\dot{\gamma}_k^\perp(t))\mathrm{d}t.
\end{split}
\end{align}
We define the curve $\tilde{\gamma}_k :(0,\tilde{L}_k) \to [a_k,b_k]\times\mathbb{R}$ parametrized by arc-length that connects $(x_0,h(x_0))$ with $\gamma_k(t_0)$ through a straight line and then follows $\gamma_k$ until $(b_k,h(b_k))$. Setting $\zeta_k = (b_k,h(b_k)) -(x_0,h_n(x_0))$ we obtain
\begin{align}\label{Boundaryhn2}
\begin{split}
|\zeta_k| \varphi\left(\frac{\zeta_k^\perp}{|\zeta_k|}\right)&=\varphi(\zeta_k^\perp) = \varphi\Big(\left(\int_0^{\tilde{L}_k}\dot{\tilde{\gamma}}_k(t)\mathrm{d}t\right)^\perp\Big) \leq \int_0^{\tilde{L}_k}\varphi(\dot{\tilde{\gamma}}^\perp(t))\mathrm{d}t \\&= \int_0^{\tilde{L}_k-L_k+t_0}\varphi(\dot{\tilde{\gamma}}^\perp(t))\mathrm{d}t + \int_{\tilde{L}_k-L_k+t_0}^{\tilde{L}_k}\varphi(\dot{\tilde{\gamma}}^\perp(t))\mathrm{d}t\\&= |\xi_k-\zeta_k|\varphi\left(\frac{\xi_k^\perp}{|\xi_k|}\right) + \int_{t_0}^{L_k}\varphi(\dot{\gamma}^\perp(t))\mathrm{d}t.
\end{split}
\end{align}
Here we used the fact that $\xi_k,\zeta_k$ point in the same direction and $\tilde{\gamma}$ connects $(x_0,h(x_0))$ with $\gamma_k(t_0)$ through a straight line and that $\tilde{\gamma}_k(\tilde{L}_k-L_k+t)=\gamma_k(t), t \in (t_0,L_k)$. Using  (\ref{Boundaryhn1}) and (\ref{Boundaryhn2}) we obtain
\begin{align}\label{energyinequality2}
\begin{split}
\int_0^{L_k}\varphi(\dot{\gamma}_k^\perp(t))\mathrm{d}t&\geq |\zeta_k| \varphi\left(\frac{\zeta_k^\perp}{|\zeta_k|}\right) + (|\xi_k|-|\xi_k-\zeta_k|)\varphi\left(\frac{\xi_k^\perp}{|\xi_k|}\right) \\&=\int_{\partial \Omega_{h_n} \cap [a_k,b_k]\times\mathbb{R}}\varphi(\nu)\mathrm{d}\mathcal{H}^1.
\end{split}
\end{align}
The last inequality follows, since the first term on the left side is equal to the integration of $\varphi$ over the line segment connecting $(x_0,h(x_0))$ and $(b_k,h(b_k))$, while $(|\xi_k|-|\xi_k-\zeta_k|) = || (x_0,h(x_0))-(a_k,h(a_k))||_2$ and the second term equals therefore the integration of $\varphi$ over the line segment connecting $(a_k,h(a_k))$ and $(x_0,h(x_0))$. Summing over all $k \in \{1,\ldots,K\}$ we obtain
\begin{align*}
\int_{\partial \Omega_{h_n} \setminus \partial \Omega_h} \varphi(\nu)\mathrm{d}\mathcal{H}^1 \leq \sum_{k=1}^K \int_0^{L_k}\varphi(\dot{\gamma}_k^\perp(t))\mathrm{d}t.
\end{align*}
Note that 
\begin{align*}
\int_0^{L_k}\varphi(\dot{\gamma}_k^\perp(t))\mathrm{d}t \leq \int_{\partial \Omega_h \cap [a_k,b_k]\times \mathbb{R}} \varphi(\nu)\mathrm{d}\mathcal{H}^1.
\end{align*}
Summing over $k$ we obtain
\begin{align*}
\int_{\partial\Omega_{h_n}\setminus \Omega_h} \varphi(\nu) \mathrm{d}\mathcal{H}^1 \leq \sum_k \int_{\partial \Omega_h \cap [a_k,b_k]\times \mathbb{R}} \varphi(\nu)\mathrm{d}\mathcal{H}^1.
\end{align*}
Now for any $k$ such that $\gamma_k$ used above for estimating the energy in $[a_k,b_k]$ contains a vertical component connecting $(b_k,y_1)$ with $(b_k,h(b_k))$ and $\gamma_{k+1}$ contains a vertical component connecting $(b_k,h(b_k)) = (a_{k+1},h(a_{k+1}))$ with $(a_{k+1},y_2)$ we show that the line segment $ [(b_k,h(b_k)),(b_k,\min \{y_1,y_2\})] \subset \partial \Omega_{h} \setminus \partial\overline{\Omega}_h$. Since both the segments are contained in a continuous curve $\gamma_k,\gamma_{k+1}$ respectively we have that $h^-(b_k) \geq \min\{y_1,y_2\}>h(b_k)$. It therefore follows that $ [(b_k,h(b_k)),(b_k,\min \{y_1,y_2\})] \subset \partial \Omega_{h} \setminus \partial\overline{\Omega}_h$. Since $\varphi$ is even we have
\begin{align}\label{Energyinequality}
\begin{split}
\int_{\partial\Omega_{h_n}\setminus \Omega_h} \varphi(\nu) \mathrm{d}\mathcal{H}^1 &\leq \sum_k \int_{\partial \Omega_h \cap (a_k,b_k)\times \mathbb{R}} \varphi(\nu)\mathrm{d}\mathcal{H}^1+ 2\int_{\partial \Omega_h \setminus \partial \overline{\Omega}_h} \varphi(\nu)\mathrm{d}\mathcal{H}^1 \\&\leq\int_{(\partial \Omega_h \cap\partial\overline{\Omega}_h)\setminus \partial \Omega_{h_n}}\varphi(\nu)\mathrm{d}\mathcal{H}^1 +2\int_{\partial \Omega_h \setminus \partial \overline{\Omega}_h} \varphi(\nu)\mathrm{d}\mathcal{H}^1.
\end{split}
\end{align}
Using (\ref{EnergyEquality}),(\ref{Energyequality2}) and (\ref{Energyinequality}) we obtain
\begin{align*}
E^S(h_n) \leq E^S(h).
\end{align*}
Taking the $\limsup$ as $n \to +\infty$ yields the claim.
\end{proof}

 The following lemma is needed in order to match the volume constraint and was established in \cite[Lemma 3.1]{DP1}. 


\begin{lemma}\label{Vitali_lemma} 
Let  $h_n\in L^1(S^1_L;\mathbb{R}_+)$ such that $h_n\to h$ in $L^1(S^1_L)$. For every sequence $\{\lambda_n\}$ converging to 0, there exist a constant $\mu>0$ (depending on the sequences $\{\lambda_n\}$, $\{h_n\}$, and $h$) and an integer $N_{\mu}$ such that 
\begin{equation}\label{lowerbound}
|H_{\lambda_n}|\,+\,\frac{1}{\lambda_n}\int_{[0,L]\setminus H_{\lambda_n}}h_n(x_1)\,\mathrm{d}x_1>\mu
\end{equation}
for every $n\geq N_{\mu}$, where $H_{\lambda_n}:=\{x_1\in[0,L]\,:\, h_n(x_1)\geq\lambda_n\}$.
\end{lemma}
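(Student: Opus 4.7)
My plan is to show that the first summand $|H_{\lambda_n}|$ alone is already bounded away from zero for $n$ large, making the second (non-negative) summand an unneeded bonus. The key observation is that, in the intended use, $\|h\|_{L^1(S^1_L)}=V>0$, so $h$ cannot be zero a.e.; consequently there exists $c>0$ with $|\{h>c\}|>0$. Once such a level $c$ is fixed, I only need to transfer this positive measure down to the sets $H_{\lambda_n}=\{h_n\geq\lambda_n\}$ via the $L^1$-convergence $h_n\to h$ together with the fact that $\lambda_n\to 0<c$ forces $\{h_n>c\}\subseteq H_{\lambda_n}$ for $n$ large.

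First I would fix $c>0$ so that $2\mu:=|\{h>c\}|>0$; such a $c$ exists by the layer-cake representation $\|h\|_{L^1}=\int_0^\infty |\{h>t\}|\,\mathrm{d}t$ applied to $h\not\equiv 0$. Then for every $\varepsilon>0$ the inclusion
\begin{equation*}
\{h>c+\varepsilon\}\setminus\{|h_n-h|>\varepsilon\}\,\subseteq\,\{h_n>c\}
\end{equation*}
gives $|\{h_n>c\}|\geq|\{h>c+\varepsilon\}|-|\{|h_n-h|>\varepsilon\}|$. The last term vanishes as $n\to\infty$ by Chebyshev (since $h_n\to h$ in $L^1$), so $\liminf_n|\{h_n>c\}|\geq|\{h>c+\varepsilon\}|$; letting $\varepsilon\to 0^+$ and using the monotone exhaustion $\{h>c+\varepsilon\}\nearrow\{h>c\}$ yields $\liminf_n|\{h_n>c\}|\geq 2\mu$.

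To finish, pick $N_\mu$ large enough so that for $n\geq N_\mu$ one has both $\lambda_n<c$ and $|\{h_n>c\}|>\mu$. Then $\{h_n>c\}\subseteq H_{\lambda_n}$ gives $|H_{\lambda_n}|\geq|\{h_n>c\}|>\mu$, and adding the non-negative term $\frac{1}{\lambda_n}\int_{[0,L]\setminus H_{\lambda_n}}h_n\,\mathrm{d}x_1\geq 0$ preserves the inequality.

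I do not expect a serious obstacle here, since the argument is essentially the one in \cite[Lemma 3.1]{DP1}. The only delicate point is the liminf comparison of level sets, which is handled by the elementary set-theoretic inclusion above (this replaces any appeal to a.e. subsequential convergence and keeps the conclusion valid along the full sequence, which is exactly what is needed in the subsequent application for volume-constraint matching).
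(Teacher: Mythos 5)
Your proof is correct, but it follows a genuinely different route from the one in the paper. You prove the stronger statement that the first summand $|H_{\lambda_n}|$ alone is eventually bounded below: you pick $c>0$ with $|\{h>c\}|>0$ (possible since $\|h\|_{L^1}=V>0$ in the intended use), use the inclusion $\{h>c+\varepsilon\}\setminus\{|h_n-h|>\varepsilon\}\subseteq\{h_n>c\}$ together with Chebyshev to get $\liminf_n|\{h_n>c\}|\geq|\{h>c\}|$, and then $\lambda_n<c$ gives $\{h_n>c\}\subseteq H_{\lambda_n}$; the $\tfrac{1}{\lambda_n}$-integral is discarded as a non-negative bonus. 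The paper instead argues by contradiction and uses that second term essentially: assuming the sum is bounded by some $\mu_n\to 0$ along a subsequence, it deduces $|H_{\lambda_n}|\to 0$, invokes equi-integrability of $\{h_n\}$ (Vitali) to get $\|h_n\|_{L^1(H_{\lambda_n})}\leq\|h\|_{L^1}-\eta$, and then the factor $\tfrac{1}{\lambda_n}$ makes $\tfrac{1}{\lambda_n}\bigl(\|h_n\|_{L^1}-\|h\|_{L^1}+\eta\bigr)\to+\infty$, a contradiction. Your argument is more elementary (only convergence in measure, no equi-integrability, no contradiction/subsequence extraction) and yields a sharper conclusion, while the paper's proof is tailored to the form of the quantity actually used later as the denominator $\mu_n$ in the volume-correction $\varepsilon_n$. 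Both arguments rely on the implicit hypothesis $\|h\|_{L^1}>0$, which you state explicitly and which the paper also uses silently when fixing $\eta\in(0,\|h\|_{L^1(S^1_L)})$; note only that your closing remark that your argument is ``essentially the one in [DP1]'' is inaccurate, since the proof reproduced there is the contradiction/Vitali one, not yours.
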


\begin{proof}
By contradiction, up to passing to a subsequence (not relabeled) both for $\{\lambda_n\}$ and $\{h_n\}$ we have that 
\begin{equation}\label{lowerbound_contradicted}
|H_{\lambda_n}|\,+\,\frac{1}{\lambda_n}\int_{[0,L]\setminus H_{\lambda_n}}h_n(x_1)\,\mathrm{d}x_1\leq \mu_n
\end{equation}
for some $\mu_n$ converging to zero.
Fixed $\eta\in(0,\|h\|_{L^1(S^1_L)})$. By Vitali's Theorem there exists $\mu_{\eta}>0$ such that 
$$
\|h_n\|_{L^1(S)}\leq \|h\|_{L^1(S^1_L)}-\eta
$$
for every measurable set $S$ with $|S|\leq\mu_{\eta}$ and $n\in \mathbb{N}$.  From \eqref{lowerbound_contradicted} it follows that 
$|H_{\lambda_n}|\leq\mu_{\eta}$ for $n$ large enough, and hence we obtain that
\begin{equation}\label{bound}
\|h_n\|_{L^1(H_{\lambda_n})}\leq \|h\|_{L^1(S^1_L)}-\eta.
\end{equation}
However, by \eqref{lowerbound_contradicted} we also have that 
\begin{align*}
0\leftarrow\mu_n\geq \frac{1}{\lambda_n}\int_{[0,L]\setminus H_{\lambda_n}}h_n(x_1)\,\mathrm{d}x_1
&=\frac{1}{\lambda_n}\left(\|h_n\|_{L^1(S^1_L)}- \|h_n\|_{L^1(H_{\lambda_n})}\right)\\
&\geq  \frac{1}{\lambda_n}\left(\|h_n\|_{L^1(S^1_L)}-\|h\|_{L^1(S^1_L)})+\eta\right)
\end{align*}
where we used \eqref{bound} in the last inequality. We reached a contradiction with the fact that
$$
 \frac{1}{\lambda_n}\left(\|h_n\|_{L^1(S^1_L)}-\|h\|_{L^1(S^1_L)}+\eta\right)\to+\infty
$$
since $\lambda_n\to0$ and $h_n\to h$ in $L^1(S^1_L)$.
\end{proof}

In order to prove the upper bound we need the following density result whose proof relies on  some ideas of both \cite{DP1} and \cite{fonseca2007equilibrium},  also adapted to the anisotropic case. 

\begin{proposition}\label{Prop SurfaceContinuousApprox_withVolume} 
Let $(y,u,h) \in X$ such that $||h||_{L^1(S^1_L)} = V$. Then there exists a sequence of Lipschitz functions $h_n :S^1_L \to \mathbb{R}_+$ such that $||h_n||_{L^1(S^1_L)} = V$, and a sequence $u_n\in H^1(\Omega_{h_n};\mathbb{R}^2)$ such that:
\begin{itemize}
\item[(i)] $(y,u_n,h_n)\to(y,u,h)$ in $X$ as $n\to+\infty$;
\item[(ii)] $E(y,u_n,h_n)\to E(y,u,h)$ as $n\to+\infty$.
\end{itemize}
\end{proposition}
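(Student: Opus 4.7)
The plan is a two-step density argument: Lipschitz approximation via Lemma~\ref{Prop SurfaceContinuousApprox}, followed by a volume-preserving correction. I may assume $E(y,u,h)<+\infty$. First I would apply Lemma~\ref{Prop SurfaceContinuousApprox} to obtain Lipschitz profiles $\tilde h_n\leq\tilde h_{n+1}\leq h$ with $\tilde h_n\to h$ in $L^1(S^1_L)$, Hausdorff convergence of the complements $Q\setminus\Omega_{\tilde h_n}\to Q\setminus\Omega_h$, and $E^S(\tilde h_n)\to E^S(h)$. Because $\chi_{\Omega_{\tilde h_n}}\uparrow\chi_{\Omega_h}$ a.e.\ and $W_y(\cdot,Eu)\in L^1(\Omega_h)$ (by $E(y,u,h)<+\infty$), the choice $\tilde u_n:=u|_{\Omega_{\tilde h_n}}$ yields $E^{el}(y,\tilde u_n,\tilde h_n)\to E^{el}(y,u,h)$ by monotone convergence, and $(y,\tilde u_n,\tilde h_n)\to(y,u,h)$ in $X$ follows from the Hausdorff convergence together with the fact that every $\Omega'\subset\subset\Omega_h$ is eventually contained in $\Omega_{\tilde h_n}$.

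Next I would correct the volume defect $\mu_n:=V-\|\tilde h_n\|_{L^1}\to 0^+$. Using lower semicontinuity of $h$ and $\|h\|_{L^1}=V>0$, I fix $\bar x\in S^1_L$ and $\rho,\delta>0$ with $h\geq 2\delta$ on the arc $B_\rho(\bar x)$, so also $\tilde h_n\geq\delta$ there for $n$ large. To create vertical room below $h$ I truncate, setting $\hat h_n:=(\tilde h_n-\eta_n)_+$ with $\eta_n:=\mu_n^{1/2}$ (so $\eta_n\gg\mu_n$), and then add a Lipschitz bump: fix $\psi\in C^\infty_c(B_\rho(\bar x))$ with $\psi\geq 0$ and $\int\psi=1$, and define
\[
h_n:=\hat h_n+c_n\psi,\qquad c_n:=V-\|\hat h_n\|_{L^1}.
\]
Since $\|\tilde h_n-\hat h_n\|_{L^1}\leq\eta_n L$ one has $c_n=\mu_n+O(\eta_n)=O(\eta_n)$, so $c_n\|\psi\|_\infty\leq\eta_n$ eventually; combined with $h-\hat h_n\geq\eta_n$ on $B_\rho(\bar x)$, this forces $h_n\leq h$ globally. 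Hence $h_n$ is Lipschitz, $\|h_n\|_{L^1}=V$, $\Omega_{h_n}\subset\Omega_h$, and $\|h_n-\tilde h_n\|_\infty\to 0$, so Hausdorff convergence persists. Setting $u_n:=u|_{\Omega_{h_n}}$, dominated convergence gives $E^{el}(y,u_n,h_n)\to E^{el}(y,u,h)$.

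For the surface energy I would split $E^S(h_n)-E^S(\tilde h_n)$ into the bump contribution on $B_\rho(\bar x)$ and the truncation contribution on its complement. The bump piece reduces via one-homogeneity of $\varphi$ to $\gamma_f\int_{B_\rho(\bar x)}[\varphi(-h_n',1)-\varphi(-\tilde h_n',1)]\,dx_1$, which tends to zero since $\|h_n'-\tilde h_n'\|_\infty\leq c_n\|\psi'\|_\infty\to 0$ and $\varphi$ is Lipschitz (see \eqref{Definitionphi}). The truncation piece equals
\[
-\gamma_f\int_{\{0<\tilde h_n\leq\eta_n\}}\varphi(-\tilde h_n',1)\,dx_1+(\gamma_s\wedge\gamma_f)\varphi(0,1)\,|\{0<\tilde h_n\leq\eta_n\}|,
\]
both terms of which must vanish: pointwise monotonicity $\tilde h_n\uparrow h$ together with $\eta_n\to 0$ give $|\{0<\tilde h_n\leq\eta_n\}|\to 0$, while the integral involving $|\tilde h_n'|$ is absorbed by a coarea-type estimate to which Lemma~\ref{Vitali_lemma} (applied to $\{\tilde h_n\}$ at threshold $\lambda_n=\eta_n$) gives the decisive quantitative control. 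Combined with $E^S(\tilde h_n)\to E^S(h)$, this yields $E^S(h_n)\to E^S(h)$.

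The main obstacle is this last estimate: the Yosida transforms $\tilde h_n$ are only $n$-Lipschitz, so the naive bound $\int_{\{0<\tilde h_n\leq\eta_n\}}|\tilde h_n'|\leq n\,|\{0<\tilde h_n\leq\eta_n\}|$ is too weak, and the choice of $\eta_n$ must simultaneously satisfy (a) $\eta_n\gg\mu_n$ to keep $\Omega_{h_n}\subset\Omega_h$, (b) $\eta_n\to 0$ fast enough that $|\{0<\tilde h_n\leq\eta_n\}|$ vanishes, and (c) the tangential variation of $\tilde h_n$ on that set is absorbed. Lemma~\ref{Vitali_lemma} provides exactly the quantitative mass-distribution statement needed for (c). The anisotropic weight $\varphi$ enters only through its continuity and coercivity \eqref{phinorm}, and this is the refinement over the isotropic treatment of \cite{DP1,fonseca2007equilibrium} anticipated in the Introduction.
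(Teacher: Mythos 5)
Your first step (Lemma~\ref{Prop SurfaceContinuousApprox} plus monotone convergence for the elastic part) matches the paper. The volume correction, however, has a genuine gap. You need $\Omega_{h_n}\subset\Omega_h$ so that $u_n:=u|_{\Omega_{h_n}}$ makes sense, and for this you claim $c_n\|\psi\|_\infty\leq\eta_n$ eventually. This is false in general: the truncation $\hat h_n=(\tilde h_n-\eta_n)_+$ itself removes volume $\int\min(\tilde h_n,\eta_n)\geq\eta_n\,|\{\tilde h_n\geq\eta_n\}|$, so $c_n=V-\|\hat h_n\|_{L^1}\geq \mu_n+\eta_n\,|\{\tilde h_n\geq\eta_n\}|$, while $\|\psi\|_\infty\geq 1/|B_\rho(\bar x)|$ because $\int\psi=1$. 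Hence $c_n\|\psi\|_\infty\gtrsim \eta_n\,|\{\tilde h_n\geq\eta_n\}|/(2\rho)$, which exceeds $\eta_n$ whenever $|\{h>0\}|$ is larger than the bump support — the generic situation. In other words, the room of depth $\eta_n$ you create under $h$ over $B_\rho(\bar x)$ is of order $\eta_n\rho$, but the volume you must re-insert is of order $\eta_n L$; the scheme is self-defeating, $h_n\leq h$ fails, and with it the definition of $u_n$, the inclusion $\Omega_{h_n}\subset\Omega_h$, and the dominated-convergence argument for the elastic energy. A second, smaller issue: you invoke Lemma~\ref{Vitali_lemma} to control $\int_{\{0<\tilde h_n\leq\eta_n\}}\varphi((-\tilde h_n',1))\,\mathrm{d}x_1$, but that lemma is a statement about $L^1$ masses of sublevel sets and gives no control whatsoever on the tangential variation of $\tilde h_n$ there; as it happens you do not need that term to vanish for the upper bound (it enters with a favorable sign, and the liminf direction follows from lower semicontinuity under Hausdorff convergence, Theorem~\ref{SurfaceSemicontinuity}), but as written the argument rests on a lemma that does not say what you need.

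For comparison, the paper resolves the constraint in the opposite direction: it raises $\tilde h_n$, setting $h_n=\tilde h_n+\varepsilon_n$ where $\tilde h_n\geq\lambda_n$, $h_n=(1+\varepsilon_n/\lambda_n)\tilde h_n$ where $0<\tilde h_n<\lambda_n$, and $h_n=0$ on $\{\tilde h_n=0\}$, with $\lambda_n=(V-\|\tilde h_n\|_{L^1})^{\beta}$; Lemma~\ref{Vitali_lemma} is used exactly to guarantee $\varepsilon_n\to0$ and $\varepsilon_n/\lambda_n\to0$, which drive the surface-energy comparison through the graph parametrization. Since then $\Omega_{h_n}\not\subset\Omega_h$, the paper does not restrict $u$ but extends it by an explicit vertical shift (constant on a thin slab at a good level $x_2^0$), and checks the elastic energy directly. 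If you want to keep your ``stay below $h$'' strategy, you would have to abandon the fixed compactly supported bump and distribute the added volume over the whole region where $h-\tilde h_n>0$ (essentially reproving a version of the paper's construction), or else accept profiles above $h$ and supply an extension of $u$ as the paper does.
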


\begin{proof} Without loss of generality we assume that 
\begin{equation}\label{finite_energy}
E(y,u,h) < +\infty.
\end{equation}
By Lemma \ref{Prop SurfaceContinuousApprox} there exists a sequence of Lipschitz functions $\tilde{h}_n :[0,L] \to \mathbb{R}_+$ such that $\tilde{h}_n\to h$ in $L^1(S^1_L)$, $\tilde{h}_n\leq \tilde{h}_{n+1}\leq h$, 
\begin{equation}\label{epigraph} 
Q \setminus \Omega_{\tilde{h}_n} \to Q \setminus \Omega_h
\end{equation}
 with respect to the Hausdorff-distance,  and 
\begin{equation}\label{surface_energy_convergence2} 
E^S(\tilde{h}_n) \to E^S(h).
\end{equation} 
Note that the functions $\tilde{h}_n$ do not satisfy the $L^1$-constraint.  We introduce a parameter $\lambda_n$ to measure how much each $\tilde{h}_n$ differs from such $L^1$-constraint, as
\begin{equation}\label{lambdan} 
\lambda_n:=\left(V- ||\tilde{h}_n||_{L^1(S^1_L)}\right)^{\beta}\geq0
\end{equation}
for every $n\in\mathbb{N}$, where $\beta$ is fixed number in $(0,1)$.  We then define the functions $h_n :[0,L] \to \mathbb{R}_+$ by
\begin{equation}\label{hndefinition} 
h_n(x_1):=
\begin{cases}
\tilde{h}_n(x_1) &\textrm{if $\tilde{h}_n(x_1)=0$},\\
\tilde{h}_n(x_1)+\varepsilon_n &\textrm{if $\tilde{h}_n(x_1)\geq\lambda_n$},\\
\left(1+\frac{\varepsilon_n}{\lambda_n}\right)\tilde{h}_n(x_1)&\textrm{if $\tilde{h}_n(x_1)\in(0,\lambda_n)$}
\end{cases}
\end{equation}
for every $x_1\in[0,L]$ and for a number $\varepsilon_n\geq0$ which will be chosen so that $||h_n||_{L^1(S^1_L)}=V$. More precisely, by a straightforward computation one finds
\begin{equation}\label{epsilonn} 
\varepsilon_n:=\frac{1}{\mu_n}\left(V- ||\tilde{h}_n||_{L^1(S^1_L)}\right),
\end{equation}
where $\mu_n$ is given by
$$
\mu_n:=|\tilde{H}_{\lambda_n}|\,+\,\frac{1}{\lambda_n}\int_{[0,L]\setminus \tilde{H}_{\lambda_n}}\tilde{h}_n(x_1)\,\mathrm{d}x_1
$$
with $\tilde{H}_{\lambda_n}:=\{x_1\in[0,L]\,:\, \tilde{h}_n(x_1)\geq\lambda_n\}$. Since $\lambda_n\to0$ by the $L^1$-convergence of the $\tilde{h}_n$, we can applied Lemma \ref{Vitali_lemma} and obtain a $\mu>0$ and an integer $N_{\mu}$ such that 
$$
\mu_n>\mu
$$
for every $n\geq N_{\mu}$. Then, from \eqref{epsilonn} it easily follows that
\begin{equation}\label{epsilonnzero}
0\leq\varepsilon_n\leq\frac{1}{\mu}\left(V- ||\tilde{h}_n||_{L^1(S^1_L)}\right)\to0,
\end{equation}
and 
\begin{equation}\label{ratiozero}
0\leq\frac{\varepsilon_n}{\lambda_n}\leq\frac{1}{\mu}\left(V- ||\tilde{h}_n||_{L^1(S^1_L)}\right)^{1-\beta}\to0
\end{equation}
since $\beta\in(0,1)$. Note that in particular \eqref{epigraph} together with \eqref{epsilonnzero}  and $\tilde{h}_n \leq h_n \leq \tilde{h}_n +\varepsilon_n$   implies that
\begin{equation}\label{first_assert_half}
Q \setminus \Omega_{h_n} \to Q \setminus \Omega_h
\end{equation}
 with respect to the Hausdorff-distance. Furthermore, by also employing Bolzano's Theorem one can prove that 
$$
 |h_n(x_1)-h_n(x'_1)|\leq C_n\left(1+\frac{\varepsilon_n}{\lambda_n}\right)|x_1-x'_1| 
$$
for every $x_1,x'_1\in[0,L]$, where $C_n>0$ denotes the Lipschitz constant associated to $\tilde{h}_n$ and hence, $h_n$ are also Lipschitz. We now prove that 
\begin{equation}\label{surface_energy_convergence} 
E^S(h_n) \to E^S(h).
\end{equation}
By \eqref{ESenergy} and \eqref{hndefinition} we have that 
\begin{align}
|E^S(\tilde{h}_n)-E^S(h_n)|&\leq \gamma_f\left|\int_{\partial \Omega_{\tilde{h}_n} \cap \Omega_{\tilde{h}_n}^{1/2}\cap \{\lambda_n+\varepsilon_n>x_2>0\}} \varphi(\nu)\mathrm{d}\mathcal{H}^1-\int_{\partial \Omega_{h_n} \cap \Omega_{h_n}^{1/2}\cap \{\lambda_n>x_2>0\}} \varphi(\nu)\mathrm{d}\mathcal{H}^1\right|\nonumber\\
&= \gamma_f\sum_{i\in\mathcal{I}^n}\int_{a^n_i}^{b^n_i} \left| \varphi(\nu(\tilde{h}_n))\sqrt{1+(\tilde{h}'_n)^2}-\varphi(\nu(h_n))\sqrt{1+(h'_n)^2}\right| \,\mathrm{d}x_1
\nonumber\\
& \leq \gamma_f\sum_{i\in\mathcal{I}^n}\left(\int_{a^n_i}^{b^n_i}   A^n   \,\mathrm{d}x_1 + \int_{a^n_i}^{b^n_i}   B^n   \,\mathrm{d}x_1\right) \label{surface_energy_difference}
\end{align}
for some index set $\mathcal{I}^n$ and $a_i^n<b_i^n$  with $(a_i^n,b_i^n) \cap (a_j^n,b_j^n)= \emptyset$ for all $i,j \in \mathcal{I}^n, i \neq j$  such that 
$$
\bigcup_{i\in\mathcal{I}^n}(a_i^n,b_i^n)= \{x_1\in[0,L]\,:\, 0<\tilde{h}_n(x_1)<\lambda_n\},
$$
$$
A^n:=\varphi(\nu(h_n))\left| \sqrt{1+(\tilde{h}'_n)^2}-\sqrt{1+(h'_n)^2}\right|,
$$
and
$$
B^n:= \sqrt{1+(\tilde{h}'_n)^2} \left| \varphi(\nu(\tilde{h}_n))-\varphi(\nu(h_n))\right|,
$$
 where $\nu(g)$ for a Lipschitz function $g:[0,L] \to \mathbb{R}_+$ here denotes the normal on the graph of $g$ with respect to the graph parametrization, i.e.,
\begin{equation}\label{normal_graph} 
\nu(g)(x_1):=\frac{(-g'(x_1),1)}{\sqrt{1+(g'(x_1))^2}}
\end{equation}
for every $x_1\in[0,L]$. We observe that 
\begin{align}
A^n&\leq C \frac{\sigma_n(\tilde{h}'_n)^2}{\sqrt{1+(\tilde{h}'_n)^2}+\sqrt{1+(h'_n)^2}}\nonumber\\
&\leq\frac{C\sigma_n(\tilde{h}'_n)^2}{2\sqrt{1+(\tilde{h}'_n)^2}}\leq\frac{C}{2}\sigma_n\sqrt{1+(\tilde{h}'_n)^2},
 \label{An}
\end{align}
where in the first inequality we used \eqref{phinorm} and
$$\sigma_n:=\left[\left(\frac{\varepsilon_n}{\lambda_n}\right)^2+2\frac{\varepsilon_n}{\lambda_n}\right].$$
 We now observe that for every $x_1\in\bigcup_{i\in\mathcal{I}^n}(a_i^n,b_i^n)$
\begin{align}
|\nu(\tilde{h}_n)-\nu(h_n)|&\leq\frac{|(-\tilde{h}'_n\sqrt{1+(h'_n)^2}+h'_n\sqrt{1+(\tilde{h}'_n)^2}, \sqrt{1+(h'_n)^2}-\sqrt{1+(\tilde{h}'_n)^2})|}{1+(h_n'(x_1))^2}\nonumber\\
&\leq\frac{1}{\sqrt{1+(h_n'(x_1))^2}}\sqrt{\left(\frac{\sigma_n}{2}+\frac{\varepsilon_n}{\lambda_n}|h'_n|\right)^2+\frac{\sigma_n^2}{4}}
\nonumber\\
&\leq\frac{1}{\sqrt{2}}\left[\left(\frac{\varepsilon_n}{\lambda_n}\right)^2+3\frac{\varepsilon_n}{\lambda_n}\right]\leq \sqrt{2}\,\sigma_n,\label{normals}
\end{align}
where in the first inequality we used \eqref{normal_graph}, and in second the fact that $h'_n(x_1)=\tilde{h}'_n(x_1)+\frac{\varepsilon_n}{\lambda_n}\tilde{h}'_n(x_1)$ and the same reasoning employed in \eqref{An}.
Therefore, by \eqref{ratiozero} and \eqref{normals} we have that $|\nu(\tilde{h}_n)-\nu(h_n)|\to0$ as $n\to+\infty$. Since $\varphi$ is bounded and convex and hence, Lipschitz, we then obtain that
\begin{align}
B^n&\leq C_{\varphi} \sqrt{1+(\tilde{h}'_n)^2}|\nu(\tilde{h}_n)-\nu(h_n)|\nonumber\\
&\leq \sqrt{2}\,C_{\varphi}\sigma_n\sqrt{1+(\tilde{h}'_n)^2}, \label{Bn}
\end{align}
for large $n\in\mathbb{N}$, where $C_{\varphi}$ is the Lipschitz constant of $\varphi$. Therefore, from \eqref{surface_energy_difference}, \eqref{An}, and \eqref{Bn} it follows that there exists a constant $C'>0$ such that
$$
|E^S(\tilde{h}_n)-E^S(h_n)|\leq C'\gamma_f \sigma_n \sum_{i\in\mathcal{I}^n}\int_{a^n_i}^{b^n_i}   \sqrt{1+(\tilde{h}'_n)^2}   \,\mathrm{d}x_1\leq C'\gamma_f \sigma_n \mathcal{H}^1(\Gamma_n),
$$
where $\Gamma_n$ denotes here the graph of $\tilde{h}_n$.  Therefore, since  $\mathcal{H}^1(\Gamma_n)$ are uniformly bounded because of \eqref{phinorm} and \eqref{surface_energy_convergence2}, and $\sigma_n$ tends to 0 by \eqref{ratiozero}, we can conclude that
$$
|E^S(\tilde{h}_n)-E^S(h_n)|\to 0
$$
from which \eqref{surface_energy_convergence} follows in view of  \eqref{surface_energy_convergence2}. 

Let us now define $u_n: \Omega_{h_n}\to\mathbb{R}^2$ by
\begin{equation}\label{undefinition} 
u_n(x_1,x_2):=
\begin{cases}
u(x_1,x_2-\varepsilon_n) &\textrm{if $x_2>x^0_2+\varepsilon_n$},\\
u(x_1,x^0_2) &\textrm{if $x^0_2+\varepsilon_n\geq x_2>x^0_2$},\\
u(x_1,x_2) &\textrm{if $x^0_2\geq x_2$,}
\end{cases}
\end{equation}
for a $x^0_2\in(-R,-\varepsilon_n)$ chosen in such a way that $u(\cdot,x^0_2)\in H^1(S^1_L;\mathbb{R}^2)$ (which we can by \eqref{finite_energy}). Note that $u_n$ are well defined in $\Omega_{h_n}$ since $h_n\leq \tilde{h}_n+\varepsilon_n\leq h+\varepsilon_n$. Furthermore, by  \eqref{epsilonnzero} and \eqref{undefinition} we have that $u_n\to u$ in $L^2_{\rm loc}(\Omega_{h_n};\mathbb{R}^2)$ as $n\to+\infty$, which together with \eqref{first_assert_half} yields Assertion (i). 

The remaining part of the proof is devoted to prove 
\begin{equation}\label{elastic_energy_convergence} 
E^{el}(y,u_n,h_n) \to E^{el}(y,u,h)
\end{equation}
which together with \eqref{surface_energy_convergence} implies Assertion (ii). We begin by observing that 
 \begin{align}\label{elastic_energy_convergence_novolume}
 \begin{split}
\lim_{n\to +\infty} E^{el}(y,u,\tilde{h}_n) &= \lim_{n\to +\infty}\int_{\Omega_{\tilde{h}_n}} W_y(x,Eu(x))\mathrm{d}x \\&=  \int_{\Omega_{h}} W_y(x,Eu(x))\mathrm{d}x= E^{el}(y,u,h). 
\end{split}
\end{align}
where we applied the Monotone Convergence Theorem. Furthermore, by \eqref{undefinition} we observe that
 \begin{align}
 |E^{el}(y,u_n,\tilde{h}_n)-& E^{el}(y,u,\tilde{h}_n)|\leq \,C\Big[\,\int_{[0,L]\times[x^0_2,x^0_2+\varepsilon_n]} W_y(x,Eu(x_1,x_2^0))\mathrm{d}x\nonumber\\
  & + \int_{\{\tilde{h}_n>0\}\times[0,\varepsilon_n]}  |\delta\nabla y|^2   \,\mathrm{d}x
+ \int_{\{\tilde{h}_n=0\}\times[-\varepsilon_n,0]} W_y(x,Eu(x))\mathrm{d}x
 \nonumber\\
  & \quad\qquad\qquad\qquad\qquad\qquad\qquad\qquad+ \int_{E_n} W_y(x,Eu(x))\mathrm{d}x  \,\Big]\nonumber\\
  & \leq C\Big[\,\varepsilon_n\int_{[0,L]} |Eu(x_1,x_2^0)|^2\mathrm{d}x_1
  +  \int_{[0,L]\times[0,\varepsilon_n]}  |\delta\nabla y|^2   \,\mathrm{d}x\nonumber\\
  & \quad\quad\quad+  \int_{[0,L]\times[-\varepsilon_n,0]}  |Eu|^2   \,\mathrm{d}x
  +  \int_{E_n} |Eu|^2+|\delta\nabla y|^2 \mathrm{d}x  \,\Big] \label{elastic_energy_convergence2} 
 \end{align}
 where $\{\tilde{h}_n=0\}:=\{x_1\in[0,L]\,:\, \tilde{h}_n(x_1)=0\}$, $\{\tilde{h}_n>0\}:=[0,L]\setminus\{\tilde{h}_n=0\}$, $\{\lambda_n>\tilde{h}_n>0\}:=\{x_1\in[0,L]\,:\, \lambda_n>\tilde{h}_n(x_1)>0\}$, and 
 $$
 E_n:=\left(\Omega_{\tilde{h}_n}\setminus(\Omega_{\tilde{h}_n}-\varepsilon_n\textbf{e}_2)\right)\cap\left(\{\lambda_n>\tilde{h}_n>0\}\times(0,+\infty)\right).$$
Notice that $|E_n|\leq C\varepsilon_n$ for some constant $C>0$, since by \eqref{hndefinition} $0<\tilde{h}_n-(h_n-\varepsilon_n)\leq +\varepsilon_n$ for $x_1\in\{\lambda_n>\tilde{h}_n>0\}$. Therefore,  from  \eqref{epsilonnzero} and \eqref{elastic_energy_convergence2} it easily follows  that 
$$
 |E^{el}(y,u_n,\tilde{h}_n)- E^{el}(y,u,\tilde{h}_n)|\to0
$$
as $n\to+\infty$, and hence, also in view of \eqref{elastic_energy_convergence_novolume}, we obtain \eqref{elastic_energy_convergence}. This  concludes the proof of Assertion (ii) and of the Proposition.
  \end{proof} 


 In view of previous density results, and in particular because we can reduce to Lipschitz profile functions (matching the volume constraint), by Korn's inequality  displacements of energy-bounded sequences are in  $W^{1,2}(\Omega_h)$ and hence, by Serrin's Theorem we can restrict to the case of smooth displacements $u$. In this regard, notice also that  due to the growth and continuity properties of $W_y$ strong convergence in $W^{1,2}(\Omega_h)$ implies convergence of the elastic energies.  For such pairs of regular profiles and displacement we construct the recovery sequence explicitly. The convergence of the elastic energy then follows by Taylor expanding the discrete cell-energy and the convergence of the surface energy can also be reduced to the computation of the surface energy of (a part of) a half space. 


\begin{proposition}\label{Proposition Gamma-limsup} Let $(y,u,h) \in X$ we then have 
\begin{align*}
E''(y,u,h) \leq E(y,u,h).
\end{align*}
\end{proposition}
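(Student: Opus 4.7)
My plan is a three-stage recovery-sequence construction, relying on successive density arguments to reduce the problem to a case where an explicit construction works, and then showing both the surface and elastic discrete energies converge to their continuum counterparts.

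\textbf{Stage 1: Reduction to regular profiles and displacements.} Since $E''$ is by definition lower-semicontinuous with respect to $\tau_X$, it suffices to prove the upper bound on a class of $(y,u,h)$ that is dense in energy. First, by Proposition \ref{Prop SurfaceContinuousApprox_withVolume} I can reduce to the case where $h$ is Lipschitz and satisfies $\|h\|_{L^1}=V$, while approximating $u$ by the explicit shifts $u_n$ given in that proposition. Once $h$ is Lipschitz, $\Omega_h$ is a Lipschitz set, so a standard Korn inequality plus Serrin-type mollification lets me further approximate $u$ (and its extension across $\partial\Omega_h$) by smooth periodic-in-$x_1$ functions with $L^2$-convergence of $Eu_n\to Eu$, and hence convergence of $E^{el}$. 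A diagonal argument then completes the reduction.

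\textbf{Stage 2: Explicit discrete recovery for regular data.} For fixed $(y,u,h)$ with $h$ Lipschitz, $\|h\|_{L^1}=V$, and $u\in C^\infty(\overline{\Omega_h};\mathbb{R}^2)$, I construct $h_\varepsilon$ by taking the piecewise constant interpolation in the sense of \eqref{h Interpolation} of a node-value function that approximates $h$ from below (or above) on the horizontal lattice positions; a small $O(\varepsilon)$ correction on at most one column will be needed so that $\|h_\varepsilon\|_{L^1}=V_\varepsilon$ (this is possible since a single lattice column changes the discrete volume by a quantum of order $\varepsilon^2$, and $|V-V_\varepsilon|=o(1)$). Given $R\in SO(2)$, $b\in\mathbb{R}^2$ with $y=Rx+b$, I set on the lattice nodes $y_\varepsilon(i):=Ri+b+\sqrt{\varepsilon}\,u(i)$ and $u_\varepsilon(i):=u(i)$, extending by the standard piecewise affine/constant interpolation \eqref{y interpolate}. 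Orientation preservation \eqref{orientation preserving} follows from $|\nabla u|\leq C$ for $\varepsilon$ small enough.

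\textbf{Stage 3: Energy convergence.} For the elastic energy, on each triangle $T\in\mathcal{T}_\varepsilon$ intersecting $\Omega_{h_\varepsilon}$ one has $\nabla y_\varepsilon=R+\sqrt{\varepsilon}\,\nabla u(\hat x_T)+o(\sqrt\varepsilon)$. Taylor-expanding $V^\varepsilon_{i,j}$ around its minimum (as in \eqref{taylor_Vs}--\eqref{taylor_Vf}), using $\delta_\varepsilon\varepsilon^{-1/2}\to\delta$ and $R^TR=\mathrm{Id}$, and summing as a Riemann sum over triangles of area $\sim\varepsilon^2$ gives
\begin{align*}
E^{el}_\varepsilon(y_\varepsilon,h_\varepsilon)\to \tfrac{8}{\sqrt{3}}\int_{\Omega_h^+}K_f\,D^2W(\nabla u-\delta R)\,\mathrm{d}x+\tfrac{8}{\sqrt{3}}\int_{\Omega^-}K_s\,D^2W(\nabla u)\,\mathrm{d}x,
\end{align*}
which matches $E^{el}(y,u,h)$ since $D^2W$ depends on $u$ only through $Eu$ (the skew part is in the null space of the quadratic form). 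For the surface energy, since $h$ is Lipschitz, the approximating region $\Omega_{h_\varepsilon}$ has no vertical cuts for small $\varepsilon$, so only the first and third terms in \eqref{ESenergy} appear. The sum of missing bonds in $E^S_\varepsilon$ localizes around $\partial\Omega_{h_\varepsilon}$: on each triangle-length of the discrete profile the contribution is a Riemann sum that converges to $\gamma_f\int\varphi(\nu)\,d\mathcal{H}^1$ by the standard computation of the anisotropic surface energy for nearest-neighbor bonds on the triangular lattice (which gives exactly $\varphi$ as in \eqref{Definitionphi}); on the portion of $\partial\Omega_{h_\varepsilon}$ lying in $\{x_2=0\}$ the counting factor $\gamma_s\wedge\gamma_f$ appears, since for atoms on the interface the local count of missing bonds favors the smaller surface tension (this is the wetting/dewetting dichotomy already discussed in the Introduction).

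\textbf{Main obstacle.} The delicate step is Stage 2: simultaneously adjusting $h_\varepsilon$ to meet the discrete volume constraint $\|h_\varepsilon\|_{L^1}=V_\varepsilon$ and the integrality condition \eqref{eq : depo}, without spoiling the surface-energy count; this requires the kind of careful single-column or few-column correction reminiscent of \cite{DP1,fonseca2007equilibrium} but adapted to the anisotropic $\varphi$. The second delicate point is the upper-bound count for the bottom term with coefficient $\gamma_s\wedge\gamma_f$ in the wetting regime, where the recovery profile must be allowed to have discrete zero-height regions so that the substrate surface is exposed with cost $\gamma_s$ rather than $\gamma_f$; this is handled by choosing the node values of $h_\varepsilon$ to be zero exactly on $\{h=0\}$ and using the Lipschitz approximation from Stage 1 to make this set $\mathcal{H}^1$-close to the correct trace on $\{x_2=0\}$.
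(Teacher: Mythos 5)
Your overall architecture is the same as the paper's: reduce by Proposition \ref{Prop SurfaceContinuousApprox_withVolume} to Lipschitz $h$ with the exact volume, use Korn plus smoothing to reduce to regular $u$, then discretize explicitly with $y_\varepsilon(i)=Ri+b+\sqrt{\varepsilon}\,u(i)$, Taylor-expand the cell energies for the elastic part, and count missing bonds for the surface part. However, two of your steps, as stated, do not work. First, the volume matching: the discrepancy $V_\varepsilon-\|h_\varepsilon\|_{L^1}$ is only $o(1)$ (all that is known is $V_\varepsilon\to V$ and $h_\varepsilon\to h$ in $L^1$), i.e.\ it corresponds to $N_\varepsilon\sim\varepsilon^{-2}\bigl(V_\varepsilon-\|h_\varepsilon\|_{L^1}\bigr)$ atoms, and $N_\varepsilon$ may diverge. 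A correction confined to one column with an $O(\varepsilon)$ height change alters the volume only by $O(\varepsilon^2)$ and cannot absorb this; piling all $N_\varepsilon$ atoms into a single column would require a height change of order $N_\varepsilon\varepsilon$ and a surface-energy error of the same order, which need not vanish. The paper resolves this by inserting an approximately square block of side $\lfloor |N_\varepsilon|^{1/2}\rfloor$ atoms, spread over $\sim|N_\varepsilon|^{1/2}$ consecutive columns inside an interval where $h>0$ and $h'$ is constant, so that the extra surface energy is $C\varepsilon|N_\varepsilon|^{1/2}=C(\varepsilon^2|N_\varepsilon|)^{1/2}\to0$. Your own "delicate step" is therefore exactly where your quantitative reasoning fails.

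Second, the bottom term with coefficient $\gamma_s\wedge\gamma_f$: you have the regimes reversed. In the wetting regime $\gamma_s>\gamma_f$, so $\gamma_s\wedge\gamma_f=\gamma_f$, and the recovery profile must \emph{cover} the exposed substrate with an infinitesimal film layer; this is why the paper's $h_\varepsilon$ adds $+\varepsilon$ (resp.\ $+\tfrac{3\varepsilon}{2}$) to the node values when $\gamma_s\geq\gamma_f$, so that $h_\varepsilon\geq\varepsilon$ everywhere and the whole boundary is paid at rate $\gamma_f$. With your choice $h_\varepsilon=0$ on $\{h=0\}$ in that regime, the exposed substrate bonds are weighted $\gamma_s>\gamma_s\wedge\gamma_f$, and whenever $|\{h=0\}|>0$ the limsup inequality fails on that portion; setting $h_\varepsilon=0$ on $\{h=0\}$ is correct only in the dewetting case $\gamma_s<\gamma_f$. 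A more minor point: your surface-energy convergence for merely Lipschitz $h$ is asserted as a "standard Riemann-sum computation"; the paper makes this rigorous through an additional reduction to piecewise affine profiles (built from a covering $A_\delta$ of $\{h=0\}$), for which the missing-bond count on each interval of constant slope is directly comparable to $\int\varphi(\nu)\,\mathrm{d}\mathcal{H}^1$; some intermediate step of this kind is needed to make your Stage 3 precise.
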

\begin{proof} We use a density argument. Note that for general $(y,u,h) \in X$ it is not true, that for any such $u$ there exist $\{u_k\} \subset C^\infty(\Omega_h)$ and such that $u_k \to u $ strongly in $H^1(\Omega_h)$, since $h$ might not be Lipschitz continuous. However, by Proposition \ref{Prop SurfaceContinuousApprox_withVolume} there exists a sequence of Lipschitz functions $h_n :S^1_L \to \mathbb{R}_+$ such that $||h_n||_{L^1(S^1_L)} = V$, and a sequence $u_n\in H^1(\Omega_{h_n};\mathbb{R}^2)$ such that $(y,u_n,h_n)\to(y,u,h)$ in $X$ and  $E(y,u_n,h_n)\to E(y,u,h)$ as $n\to+\infty$.
 
We can therefore assume that $h \in \mathrm{Lip}([0,L])$. 

Now by \cite{fonseca2007equilibrium}, Theorem 4.2 there exists a $A= A(u) \in \mathbb{R}^{2\times 2}$ such that $A=-A^T$ and a constant $C=C(h)$ such that
\begin{align*}
\int_{\Omega_h}|\nabla u-A|^2\mathrm{d}x \leq C\int_{\Omega_h}|Eu|^2\mathrm{d}x,
\end{align*}
moreover by Poincare\'es Inequality there exist a vector $b=b(u) \in \mathbb{R}^2$ such that
\begin{align*}
\int_{\Omega_h}|u-(Ax+b)|^2\mathrm{d}x \leq C\int_{\Omega_h}|\nabla u-A|^2\mathrm{d}x,
\end{align*}
with $C=C(h)$ as above. We therefore obtain that $v(x) = u(x)-(Ax+b) \in H^1(\Omega_h)$ and with that $u \in H^1(\Omega_h)$.
Hence there exist $\{u_k\} \subset C^\infty(\Omega_h)$ converging to $u$ strongly in $H^1(\Omega_h)$. Since $W_y(x,\cdot)$ is continuous and satisfies
\begin{align*}
W_y(x,\xi) \leq C(1+|\xi|^2)
\end{align*}
we have that
\begin{align*}
\lim_{k\to+\infty} E^{el}(y,u_k,h) = E^{el}(y,u,h). 
\end{align*}
Further we  reduce  to the setting where $h:[0,L] \to \mathbb{R}$ is piecewise affine. Fix $\delta>0$. There exists $A_\delta \subset S^1_L$ open such that $\{h=0\} \subset A_\delta$ and 
\begin{align}\label{Adelta small}
|A_\delta|\leq |\{h=0\}| +\delta.
\end{align}
As $\{h=0\}$ is compact and $A_\delta$ is open we can assume that
\begin{align*}
A_\delta = \bigcup_{k=1}^{N_\delta}I_k^\delta,
\end{align*}
with $N_\delta \in \mathbb{N}$, $I_k^\delta$ open intervals and $\mathrm{dist}(I_k^\delta,I_j^\delta) \geq d_\delta >0$ for all $j\neq k$. Since the $I_k^\delta$ are disjoint and $\{h=0\} \subset A_\delta$, by (\ref{Adelta small}) we have that
\begin{align*}
\sum_{k=1}^{N_\delta}|I_k^\delta \setminus \{h=0\}| =\sum_{k=1}^{N_\delta}|I_k^\delta| -  |\{h=0\}|=|A_\delta|-|\{h=0\}| \leq \delta.
\end{align*}
Define $\delta_k = |I_k^\delta\setminus \{h=0\}|$ and $\eta = \min \{\delta_k,\frac{d_\delta}{2} : k=1,\ldots,N_\delta \}$. Now set $\Delta_\delta= \partial A_\delta \cup \partial (A_\delta)_{\eta}\cup \left( ([0,L] \setminus (A_\delta)_\eta )\cap \delta\mathbb{Z} \right) $ and we order $\Delta_\delta$ by writing $\Delta_\delta = \left\{x_i\right\}_{i=1}^{M_\delta}$ with $
0\leq x_1\leq \ldots \leq x_i \leq x_{i+1} \ldots\leq L
$. Now define $h_\delta :[0,L] \to \mathbb{R}_+$ by
\begin{align*}
h_\delta(x_i) = \begin{cases}0 &x_i \in \partial A_\delta, \\
h(x_i) &\text{otherwise}
\end{cases}
\end{align*}
and interpolate linearly between the function values of $h_\delta(x_i)$. Note that if  $h_\delta(x_i)\neq h(x_i)$ we have that $x_i \in \partial A_\delta$ and therefore $x_i \in \partial I_k^\delta$ for some $k=1,\ldots,N_\delta$. Since $|I_k^\delta\setminus \{h=0\}|=\delta_k$ there exists $x $ with $h(x)=0$ such that $|x-x_i|\leq \delta_k$. Since $h$ is Lipschitz have that $|h_\delta(x_i)-h(x_i)|\leq C\delta_k$. Now let $x \in S^1_L$ if $x \in I_k^\delta$ we have that $I_k^\delta \setminus \{h=0\}=\delta_k$, so there exist $x' \in \{h=0\}$, $|x-x'|\leq \delta_k$, we therefore have $h(x) \leq C\delta_k\leq \delta$. On the other hand if $ x \notin A_\delta $ there exists $x_i,x_{i+1} \in \Delta_\delta$ with $|x-x_i|,|x-x_{i+1}|\leq \delta$ and $|h(x_i)-h_\delta(x_i)| \leq C\delta$. By the Lipschitz continuity of both $h_\delta$ and $h$ we have that $|h(x)-h(x_\delta)|\leq C\delta$. We therefore have that $h_\delta \to h$ uniformly on $S^1_L$ and also $h_\delta \to h$ with respect to the convergence given in Definition \ref{Definition Convergence}. Note that $h_\delta$ is a piecewise affine function so we are done with the reduction to piecewise affine profiles if we show that
\begin{align*}
\lim_{\delta\to 0} E(y,u,h_\delta) = E(y,u,h).
\end{align*}
Since $Q\setminus \Omega_{h_\delta} \to Q\setminus \Omega_h$ we have that
\begin{align*}
\lim_{\delta \to 0} E^{el}(y,u,h_\delta) =  E^{el}(y,u,h).
\end{align*}
Moreover by the semicontinuity of the surface energy we have that
\begin{align*}
\liminf_{\delta \to 0} E^S(h_\delta)\geq E^S(h).
\end{align*}
It thus remains to show that
\begin{align*}
\limsup_{\delta \to 0} E^S(h_\delta)\leq E^S(h).
\end{align*}
Now we have by the Area formula and noting that $h^\prime_\delta=0$ we have that
\begin{align}\label{Ikdelta}
\begin{split}
\int_{\partial \Omega_{h_\delta} \cap(I_k^\delta\times \mathbb{R})}\varphi(\nu)\mathrm{d}\mathcal{H}^1 &= \int_{I_k^\delta}\varphi((1,h^\prime_\delta)^\perp)\mathrm{d}x \\&= \int_{I_k^\delta \cap \{h=0\}}\varphi((1,h^\prime_\delta)^\perp)\mathrm{d}x + \int_{I_k^\delta\setminus \{h=0\}}\varphi((1,h^\prime_\delta)^\perp)\mathrm{d}x \\&\leq \int_{I_k^\delta }\varphi((1,h^\prime)^\perp)\mathrm{d}x + C|I_k^\delta \setminus\{h=0\}|\\&=\int_{\partial \Omega_{h} \cap(I_k^\delta\times \mathbb{R})}\varphi(\nu)\mathrm{d}\mathcal{H}^1+C\delta_k.
\end{split}
\end{align}
Connecting $(x_i,h_\delta(x_i))$ with $(x_i,h(x_i))$, and  $(x_{i+1},h_\delta(x_{i+1}))$ with $(x_{i+1},h(x_{i+1}))$ noting $|h(x_i)-h_\delta(x_i)|,|h(x_{i+1})-h_\delta(x_{i+1})|\leq C\delta_k$ and using convexity of $\varphi$ we obtain
\begin{align}\label{Ikdeltaeta}
\begin{split}
\int_{\partial \Omega_{h_\delta} \cap((I_k^\delta)_\eta\setminus I_k^\delta\times \mathbb{R})}\varphi(\nu)\mathrm{d}\mathcal{H}^1 &\leq \int_{\partial \Omega_{h} \cap((I_k^\delta)_\eta\setminus I_k^\delta\times \mathbb{R})}\varphi(\nu)\mathrm{d}\mathcal{H}^1 + \int_{[(x_i,h_\delta(x_i)),(x_i,h(x_i))]}\varphi(\nu)\mathrm{d}\mathcal{H}^1\\&\quad+\int_{[(x_{i+1},h_\delta(x_{i+1})),(x_{i+1},h(x_{i+1}))]}\varphi(\nu)\mathrm{d}\mathcal{H}^1\\&\leq \int_{\partial \Omega_{h} \cap((I_k^\delta)_\eta\setminus I_k^\delta\times \mathbb{R})}\varphi(\nu)\mathrm{d}\mathcal{H}^1 +C\delta_k.
\end{split}
\end{align}
Now outside of $(A_\delta)_\eta$ by convexity we have for a subinterval $I$ of $S^1_L\setminus (A_\delta)_\eta$
\begin{align}\label{0LsetminusAdelta}
\begin{split}
\int_{\partial \Omega_{h_\delta} \cap(I\times \mathbb{R})}\varphi(\nu)\mathrm{d}\mathcal{H}^1&=\int_{I}\varphi((1,h^\prime_\delta)^\perp)\mathrm{d}x= \varphi\left(\int_{I}(1,h^\prime_\delta)^\perp\mathrm{d}x\right)\\&= \varphi\left(\int_{I}(1,h^\prime)^\perp\mathrm{d}x\right)\leq \int_{I}\varphi((1,h^\prime)^\perp)\mathrm{d}x.
\end{split}
\end{align}
Noting that $A_\delta$ is a finite union of intervals, so $S^1_L\setminus (A_\delta)_\eta$ is (up to finitely many points) the finite union of intervals with nonempty interior. Summing over $k$ and all intervals $I$ using (\ref{Ikdelta})-(\ref{0LsetminusAdelta}) and noting that $\sum_k \delta_k=\delta$ we obtain
\begin{align*}
E^S(h_\delta) \leq E^S(h) + C\sum_k \delta_k\leq E^S(h) +C\delta.
\end{align*}
 Repeating the same arguments as in Lemma 4.8 we can assume that $||h_\delta||_{L^1(S^1_L)}=V$. 
Taking the $\limsup$ as $\delta \to 0$ the claim follows.  We therefore have that for every $(y,u,h) \in X$ there exist $(y,u_k,h_k)\in X$ with $y=Rx+b$, $R\in SO(2),b \in \mathbb{R}^2$, $u_k \in C^\infty(\mathbb{R}^2)$ and $h_k$ piecewise affine,  $||h_k||_{L^1(S^1_L)}=V$  such that
\begin{align}\label{Density}
\lim_{k \to +\infty} E(y,u_k,h_k) = E(y,u,h).
\end{align}
It therefore suffices to construct the recovery sequence for $h:[0,L] \to \mathbb{R}_+$ piecewise affine, $u \in C^\infty(\Omega_h)$ and for $y = Rx + b$, with $R \in SO(2)$ and $b \in \mathbb{R}^2$. We extend $u$ to a function $u \in C^\infty(Q)$. In the choice of $h_\varepsilon$ there are two cases two consider, either $\gamma_f \leq \gamma_s$ or $\gamma_f >\gamma_s$. We define $h_\varepsilon : \frac{\sqrt{3}}{2}\varepsilon(\mathbb{Z}+\frac{1}{2})\cap S^1_L \to \varepsilon\mathbb{N}$ by 
\begin{align*}
h_\varepsilon(i) = 
\begin{cases}\varepsilon\big\lfloor\varepsilon^{-1} h\left(\frac{\sqrt{3}}{2}\varepsilon( i+\frac{1}{2})\right)\big\rfloor &i \text{ even}, \gamma_s < \gamma_f, \\
\varepsilon\big\lfloor\varepsilon^{-1} h\left(\frac{\sqrt{3}}{2}\varepsilon( i+\frac{1}{2})\right)\big\rfloor  + \frac{\varepsilon}{2} &i \text{ odd}, \gamma_s < \gamma_f, \\
\varepsilon\big\lfloor\varepsilon^{-1} h\left(\frac{\sqrt{3}}{2}\varepsilon ( i+\frac{1}{2})\right)\big\rfloor +\varepsilon &i \text{ even}, \gamma_s \geq \gamma_f, \\
\varepsilon\big\lfloor\varepsilon^{-1} h\left(\frac{\sqrt{3}}{2}\varepsilon ( i+\frac{1}{2})\right)\big\rfloor  + \frac{3\varepsilon}{2} &i \text{ odd}, \gamma_s \geq \gamma_f. \\
\end{cases}
\end{align*} 
and finally we define
\begin{align*}
y_\varepsilon(i) = (R i+b)  + \sqrt{\varepsilon}u(i),\quad i \in \mathcal{L}_\varepsilon(\Omega_{h_\varepsilon}).
\end{align*}
We then have that $u_\varepsilon(i) = u(i)$ for all $\mathcal{L}_\varepsilon(\Omega_{h_\varepsilon})$. Moreover we have that $y_\varepsilon \to y$ in $L^2_{\mathrm{loc}}(\Omega_h)$, $u_\varepsilon \to u$ in $L^2_{\mathrm{loc}}(\Omega_h)$ and $h_\varepsilon \to h$ in the sense of Definition \ref{Definition Convergence}. First note that $||\nabla u||_\infty \leq C$ and therefore we for $\varepsilon$ small enough
\begin{align}\label{Wcellbound}
W_{\varepsilon,cell}(\nabla y_\varepsilon|T,i) \leq C \varepsilon ||\nabla u||^2_\infty\leq C\varepsilon.
\end{align}
Furthermore since $h$ is Lipschitz we have for $\eta >0$ and $\varepsilon$ small enough that
\begin{align*}
\#\left(\mathcal{L}_\varepsilon(\Omega_{h_\varepsilon}) \cap \{x \in \mathbb{R}^2 : \mathrm{dist}(x,\partial\Omega_h)\leq \eta\}\right)\leq C\eta \varepsilon^{-2}.
\end{align*} 
Finally note that $\#((\partial^+\mathcal{L}_\varepsilon(\Omega^-)\cup \partial^-\mathcal{L}_\varepsilon(\Omega^-))_\eta )\leq C\eta\varepsilon^{-2}$. Splitting the interactions into $\mathcal{L}_\varepsilon(\Omega_{h_\varepsilon}) \cap \{x \in \mathbb{R}^2 : \mathrm{dist}(x,\partial\Omega_h)\leq \eta\}$, $(\partial^+\mathcal{L}_\varepsilon(\Omega^-)\cup \partial^-\mathcal{L}_\varepsilon(\Omega^-))_\eta$, $\mathcal{L}^\circ(\Omega_h^+)\cup \mathcal{L}^\circ(\Omega_h^-)$ using (\ref{Wcellbound}) and using the fact that $F \mapsto \varepsilon^{-1} W_{\varepsilon,cell}(R+\varepsilon F,i)$ converges uniformly on compact subsets to $W_y(i,Eu)$ for $i \in \mathcal{L}^\circ(\Omega_h^+)\cup \mathcal{L}^\circ(\Omega_h^-)$ we obtain that
\begin{align*}
\limsup_{\varepsilon\to 0} E^{el}_\varepsilon(y_\varepsilon,u_\varepsilon,h_\varepsilon)\leq E^{el}(y,u,h) + C\eta.
\end{align*}
The claim for the elastic energy follows by taking $\eta \to 0$. Now since $h$ is a piecewise affine function we have that $\Omega_h$ has a polygonal boundary. We proof the inequality for an interval $I$ in which $h'=const$. First note that for such an interval it is easy to check that
\begin{align*}
\sum_{j \in \mathcal{L}_\varepsilon(\Omega_{h_\varepsilon}\cap(I\times\mathbb{R}))}\varepsilon(6-\#\mathcal{N}_\varepsilon(j)) \leq \int_{\partial \Omega_h \cap (I\times\mathbb{R})}\varphi(\nu)\mathrm{d}\mathcal{H}^1.
\end{align*}
We distinguish between the case $\gamma_f \leq \gamma_s$ and the case $\gamma_f > \gamma_s$.
 In the case $\gamma_f \leq \gamma_s$ we have that $h_\varepsilon(i) \geq \varepsilon$ for all $i \in \frac{\sqrt{3}}{2}\varepsilon(\mathbb{Z}+\frac{1}{2})\cap S^1_L$ and therefore
 \begin{align*}
 E^S_\varepsilon(h_\varepsilon) \leq E^S(h) + C\varepsilon,
 \end{align*}
 where the error $C\varepsilon$ is due to the finite number of points, where $h'$ jumps. In the case $\gamma_f > \gamma_s$ the same argument shows that 
 \begin{align}\label{Case2hneq0}
\gamma_f\sum_{j \in \mathcal{L}_\varepsilon(\Omega_{h_\varepsilon}\cap(I\times\mathbb{R}))}\varepsilon(6-\#\mathcal{N}_\varepsilon(j)) \leq \gamma_f\int_{\partial \Omega_h \cap (I\times\mathbb{R})}\varphi(\nu)\mathrm{d}\mathcal{H}^1,
\end{align}
for all $h$ such that $h>0$. On the other hand if there is an interval for which $h=0$ by the definition of $h_\varepsilon$ we have that also $h_\varepsilon=0$ and we have
\begin{align}\label{Caseh=0}
\gamma_s\sum_{j \in \mathcal{L}_\varepsilon(\Omega_{h_\varepsilon}\cap(I\times\mathbb{R}))}\varepsilon(6-\#\mathcal{N}_\varepsilon(j)) \leq \gamma_s\int_{\partial \Omega_h \cap (I\times\mathbb{R})}\varphi(\nu)\mathrm{d}\mathcal{H}^1.
\end{align}
Using (\ref{Case2hneq0}) and (\ref{Caseh=0}) we obtain as before
\begin{align*}
E^S_\varepsilon(h_\varepsilon) \leq E^S(h) +C\varepsilon.
\end{align*}
 It remains to modify $h_\varepsilon$ such that $||h_\varepsilon||_{L^1(S^1_l)}=V_\varepsilon$. By Lemma 2.5 in \cite{fonseca2007equilibrium} we have that $h_\varepsilon \to h$ in $L^1(S^1_L)$. We therefore have that $||h_\varepsilon||,V_\varepsilon \in \frac{\sqrt{3}}{2}\varepsilon^2\mathbb{N}$ and
\begin{align}\label{eq : Nvareps}
\frac{\sqrt{3}}{2} N_\varepsilon \varepsilon^2=V_\varepsilon - ||h_\varepsilon||_{L^1(S^1_L)} = V -V_\varepsilon + V- ||h_\varepsilon||_{L^1(S^1_L)}   \to 0.
\end{align}
This implies $|N_\varepsilon|^{\frac{1}{2}} \varepsilon \to 0$.
Now fix an interval $(a,b) \subset S^1_L$ such that $h'=\mathrm{const}$, $h>0$ on $(a,b)$ and fix  $i_\varepsilon \in \varepsilon\frac{\sqrt{3}}{2}(\mathbb{Z} +\frac{1}{2}) \cap (a,b)$ such that 
\begin{align*}
\mathrm{dist}(i_\varepsilon,\{a,b\}) \geq \frac{\sqrt{3}}{2} |N_\varepsilon|^{\frac{1}{2}} \varepsilon.
\end{align*}
 Set $I_\varepsilon = \left(i_\varepsilon -  \frac{\sqrt{3}}{2} \lfloor |N_\varepsilon|^{\frac{1}{2}}\rfloor \varepsilon,i_\varepsilon + \frac{\sqrt{3}}{2} (\lfloor |N_\varepsilon|^{\frac{1}{2}}\rfloor-1) \varepsilon\right)$ and define $\tilde{h}_\varepsilon : \frac{\sqrt{3}}{2}\varepsilon(\mathbb{Z}+\frac{1}{2}) \cap S^1_L \to \varepsilon\mathbb{N}$
\begin{align*}
\tilde{h}_\varepsilon(i) = \begin{cases} h_\varepsilon(i) & i \notin I_\varepsilon,\\
h_\varepsilon(i_\varepsilon) +\varepsilon\left(\mathrm{sign}(N_\varepsilon) \lfloor |N_\varepsilon|^{\frac{1}{2}}\rfloor + N_\varepsilon - \mathrm{sign}(N_\varepsilon) \lfloor |N_\varepsilon|^{\frac{1}{2}}\rfloor^2 \right)   & i=i_\varepsilon,\\
h_\varepsilon(i_\varepsilon) +\varepsilon \,\mathrm{sign}(N_\varepsilon) \lfloor |N_\varepsilon|^{\frac{1}{2}}\rfloor   & \text{otherwise.}
\end{cases}
\end{align*}
Now it is clear that $||\tilde{h}_\varepsilon||_{L^1(S^1_L)} = V_\varepsilon$. Furthermore 
\begin{align*}
E_\varepsilon^S(\tilde{h}_\varepsilon)-E_\varepsilon^S(h_\varepsilon) \leq  C\varepsilon \sum_{j \in \mathcal{L}_\varepsilon(\Omega_{\tilde{h}_\varepsilon} \cap (I_\varepsilon \times \mathbb{R}))} (6- \#\mathcal{N}_\varepsilon(j)) \leq C(h) \varepsilon |N_\varepsilon|^{\frac{1}{2}} \to 0.
\end{align*}
The last estimate follows by elementary geometry and estimating the number of points in $\mathcal{L}_\varepsilon$ that have edges crossing the boundary of the parallelogram with slope determined by $h'$.

This yields the claim for $(y,u,h) \in X$, $y=Rx +b$, with $R \in SO(2),b \in \mathbb{R}^2$, $h$ piecewise affine and $u \in C^\infty(\mathbb{R}^2)$.
Now by (\ref{Density}) for general $(y,u,h) \in X$ there exist $(y,u_k,h_k) \to (y,u,h)$ such that
\begin{align*}
\lim_{k \to +\infty} E(y,u_k,h_k) = E(y,u,h).
\end{align*}
Since
\begin{align*}
E''(y,u_k,h_k)\leq E(y,u_k,h_k).
\end{align*}
we have 
\begin{align*}
E''(y,u,h)\leq \liminf_{k\to +\infty} E''(y,u_k,h_k) \leq \liminf_{k\to +\infty} E(y,u_k,h_k) = E(y,u,h)
\end{align*}
and the claim follows.
\end{proof}

\section*{Acknowledgements}

P. Piovano acknowledges support from the Vienna Science and Technology Fund (WWTF), the City of Vienna, and Berndorf Privatstiftung under Project MA16-005, and the Austrian Science Fund (FWF) project P~29681.

\end{document}